\theoremstyle{plain}
\newtheorem{theorem}{Theorem}[section]
\newtheorem{proposition}[theorem]{Proposition}
\newtheorem{corollary}[theorem]{Corollary}
\newtheorem{lemma}[theorem]{Lemma}
\theoremstyle{definition}
\newtheorem{definition}{Definition}[section]
\theoremstyle{remark}
\newtheorem{remark}{Remark}[section]
\theoremstyle{example}
\newtheorem{example}{Example}[section]
\numberwithin{equation}{section}
\title{Weighted Number Operators on Bernoulli Functionals and Quantum Exclusion Semigroups}
\author{Caishi Wang,\ \ Yuling Tang,\ \ Suling Ren\\
          School of Mathematics and Statistics\\
          Northwest Normal University,
          Lanzhou, Gansu 730070}
\date{}
\begin{document}
\maketitle

\noindent\textbf{Abstract}\ \
Quantum Bernoulli noises (QBN, for short) are the family of annihilation and creation operators acting on Bernoulli functionals,
which satisfy a canonical anti-commutation relation (CAR) in equal-time. In this paper, by using QBN, we first introduce a class of
self-adjoint operators acting on Bernoulli functionals, which we call the weighted number operators. We then make clear spectral
decompositions of these operators, and establish their commutation relations with the annihilation as well as the creation operators.
We also obtain a necessary and sufficient condition for a weighted number operator to be bounded.
Finally, as application of the above results, we construct a class of quantum Markov semigroups associated with the weighted number operators,
which belong to the category of quantum exclusion semigroups.
Some basic properties are shown of these quantum Markov semigroups, and examples are also given.

\vskip 2mm

\noindent\textbf{Keywords}\ \ Quantum Markov semigroup; quantum exclusion process; Quantum Bernoulli noises; Weighted number operator.
\vskip 2mm

\noindent\textbf{PACS numbers}: 02.50.Fz, 05.40.-a, 03.65.Db

\section{Introduction}

Quantum Markov semigroups (QMS, for short) are quantum analogues of the classical Markov semigroups in the theory of probability, which provide
a mathematical model for describing the irreversible and loss-memory evolution of a quantum system interacting with
the environment, i.e., open quantum system (see, e.g. \cite{alicki, attal,meyer,partha}).
In the past four decades, general QMS have been studied extensively and many deep results have been obtained
(see, e.g. \cite{accardi,carlen, ch-fa, davies,fagnola,fag-reb-2} and references therein).
One typical result in this aspect is the theorem established by Gorini, Kossakowski, Sudershan and Lindblad, which
gives a characterization of the generater of a uniformly continuous QMS (see \cite{partha} for details).

As a special type of QMS, quantum exclusion semigroups can be viewed as quantum analogs of the classical exclusion semigroups
(exclusion processes) in the theory of probability \cite{liggett}.
Attention has been paid to quantum exclusion semigroups in recent years.
Pantale\'{o}n-Mart\'{i}nez and Quezada \cite{panta} constructed a quantum exclusion semigroup,
which they called the asymmetric exclusion quantum Markov semigroup, on the infinite tensor product space $\bigotimes_{l\in \mathbb{Z}^d}\mathbb{C}^2$.
In 2005, Rebolledo \cite{rebolledo} considered a class of quantum exclusion semigroups on the fermionic Fock space
and investigated their decoherence property.

Quantum Bernoulli noises are the family $\{\partial_k, \partial_k^*\mid k\geq 0\}$ of annihilation and creation operators acting
on Bernoulli functionals, which satisfy a canonical anti-commutation relation (CAR) in equal-time and play an active role in
building a discrete-time quantum stochastic calculus in infinite dimensions \cite{wcl,cw}.
In 2016, by using quantum Bernoulli noises, Wang and Chen \cite{wang-chen-2016} constructed a QMS with a formal generator of the following form
\begin{equation}\label{eq-1-1}
  \mathcal{L}(X)
   = \mathrm{i}[H,X] -\frac{1}{2}\sum_{k=0}^{\infty}
   \big(X\partial_k^*\partial_k -2\partial_k^*X\partial_k
   +\partial_k^*\partial_kX\big),
\end{equation}
where $H$ is a self-adjoint operator acting on Bernoulli functionals and $\{\partial_k, \partial_k^*\mid k\geq 0\}$ are quantum Bernoulli noises.
In the present paper, we aim to investigate quantum exclusion semigroups in terms of quantum Bernoulli noises.
More precisely, we would like to construct a QMS with a formal generator of the following form
\begin{equation}\label{eq-1-2}
\begin{split}
  \mathcal{L}(X)
    &= \mathrm{i}[H,X]\\
    &\quad  -\frac{1}{2}\sum_{j,k=0}^{\infty}w(j,k)\big[ X \big(\partial_j^*\partial_k\big)^*\partial_j^*\partial_k
           - 2 \big(\partial_j^*\partial_k\big)^* X\partial_j^*\partial_k + \big(\partial_j^*\partial_k\big)^*\partial_j^*\partial_kX\big],
\end{split}
\end{equation}
where $H$ is a self-adjoint operator acting on Bernoulli functionals, $w$ is some nonnegative function and $\{\partial_k, \partial_k^*\mid k\geq 0\}$
are quantum Bernoulli noises.
From a physical point of view, a QMS with a formal generator of form (\ref{eq-1-2})
belongs to the category of quantum exclusion semigroups, and might serve as a model describing
an open quantum system consisting of an arbitrary number of identical Fermi particles,
where $H$ represents the Hamiltonian of the system and $\partial_j^*\partial_k$ represents the jump of particles from site $k$ to site $j$ with
the jump rate $\sqrt{w(j,k)}$.

To construct a QMS with a formal generator of form (\ref{eq-1-2}), one needs to deal with
the operator series
\begin{equation}\label{eq-1-3}
  \sum_{j,k=0}^{\infty}w(j,k)\big(\partial_j^*\partial_k\big)^*\partial_j^*\partial_k
  =\sum_{j,k=0}^{\infty}w(j,k)\partial_k^*\partial_j\partial_j^*\partial_k,
\end{equation}
which acts on Bernoulli functionals. So, our another goal in this paper is to make clear the structure
of the sum of operator series (\ref{eq-1-3}), which itself is interesting from a mathematical point of view.

Our main work in this paper consists of two parts.
In the first part, we define in a natural way the sum $S_w$ of operator series (\ref{eq-1-3}), which we call the 2D-weighted number operator
associated with $w$. We obtain the spectral decomposition of $S_w$ and find out a necessary and sufficient condition for $S_w$ to be bounded.
We establish the commutation relations of $S_w$ with $\partial_k$ as well as $\partial_k^*$,
which differ from the well-known CCR and CAR.
We mention that we also introduce in this part a notion of the 1D-weighted number operators and show structure properties of these operators.
In the second part, we show the possibility to construct a QMS with a formal generator of form~(\ref{eq-1-2}).
This is done by combining the general results on QMS with our results
on the 2D-weighted number operators $S_w$, which are obtained in the first part.
An easily verifiable sufficient condition is found out for the existence of a QMS with a formal generator of form~(\ref{eq-1-2}).
And finally, an example is offered to show the effectiveness of this condition and some other results are also obtained.

The paper is organized as follows. In Section~\ref{sec-2} and Section~\ref{sec-3}, we briefly recall some necessary notions
and known results about a general QMS, and collect necessary fundamentals about
quantum Bernoulli noises. Our main work then lies in Section~\ref{sec-4} and Section~\ref{sec-5},
where Section~\ref{sec-4} is divided into two subsections for clarity.

{\bf General notation and conventions.} Throughout, $\mathbb{R}$ always denotes the set of all real numbers
and $\mathbb{N}$ the set of all nonnegative integers.
We denote by $\Gamma$ the finite power set of $\mathbb{N}$, namely
\begin{equation}\label{eq-1-4}
    \Gamma
    = \{\,\sigma \mid \text{$\sigma \subset \mathbb{N}$ and $\#(\sigma) < \infty$} \,\},
\end{equation}
where $\#(\sigma)$ means the cardinality of $\sigma$ as a set.
By $\Re z$ and $\Im z$ we mean the real part and imaginary part of a complex number $z$, respectively.
If $\langle\cdot,\cdot\rangle$ is a complex inner product,
then it is conjugate linear in its first variable and linear in its second one.
For an operator $A$, we denote by $\mathrm{Dom}\, A$ its domain.
If $A$ is a densely defined operator in a Hilbert space, then $A^*$ means its adjoint operator.

\section{General results on quantum Markov semigroup}\label{sec-2}

In this section, we briefly recall some necessary notions and general results about QMS.
We refer to \cite{ch-fa, fagnola} and references therein for more details.

Let $\mathsf{H}$ be a complex separable Hilbert space with inner product $\langle\cdot,\cdot\rangle$ and norm $\|\cdot\|$.
As usual, we denote by $I$ the identity operator on $\mathsf{H}$.

Let $\mathfrak{B}(\mathsf{H})$ be the Banach algebra of all bounded operators on $\mathsf{H}$ and $\|\cdot\|_{\infty}$ the norm
in $\mathfrak{B}(\mathsf{H})$. A linear map $\mathcal{M}\colon \mathfrak{B}(\mathsf{H}) \rightarrow \mathfrak{B}(\mathsf{H})$ is said
to be completely positive if for all integer $n\geq 1$ and all finite sequence $(X_i)_{i=1}^n$, $(Y_i)_{i=1}^n$ of elements of $\mathfrak{B}(\mathsf{H})$
it holds that
\begin{equation}\label{eq-2-1}
  \sum_{i,j=1}^n Y_i^*\mathcal{M}(X_i^*X_j)Y_j\geq 0,
\end{equation}
namely the sum is a positive operator on $\mathsf{H}$.

\begin{definition}\label{def-2-1}
A quantum dynamical semigroup (QDS, for short) on $\mathfrak{B}(\mathsf{H})$ is a one parameter family $\mathcal{T} =(\mathcal{T}_t)_{t\geq 0}$
of linear maps on $\mathfrak{B}(\mathsf{H})$ with the following properties:
\begin{enumerate}
  \item[(i)] $\mathcal{T}_0(X) = X$, for all $X\in \mathfrak{B}(\mathsf{H})$;
  \item[(ii)] $\mathcal{T}_{t+s}(X) = \mathcal{T}_t (\mathcal{T}_s(X))$, for all $s$, $t \geq 0$ and all $X\in \mathfrak{B}(\mathsf{H})$;
  \item[(iii)] $\mathcal{T}_t(I) \leq I$, for all $t \geq 0$;
  \item[(iv)] $\mathcal{T}_t\colon \mathfrak{B}(\mathsf{H}) \rightarrow \mathfrak{B}(\mathsf{H})$ is completely positive for all $t \geq 0$;
  \item[(v)] for every sequence $(X_n)_{n\geq 1}$ of elements of $\mathfrak{B}(\mathsf{H})$ converging weakly to an element $X$ of
             $\mathfrak{B}(\mathsf{H})$, the sequence $(\mathcal{T}_t(X_n))_{n\geq 1}$ converges weakly to $\mathcal{T}_t(X)$ for all $t \geq 0$;
  \item[(vi)] for all trace class operator $\rho$ on $\mathsf{H}$ and all $X\in \mathfrak{B}(\mathsf{H})$, it holds that
        \begin{equation*}
        \lim_{t\to 0^+} \mathrm{Tr}(\rho \mathcal{T}_t(X)) = \mathrm{Tr}(\rho X).
        \end{equation*}
\end{enumerate}
\end{definition}

We note that, as a consequence of properties (iii) and (iv), one has the inequality
\begin{equation}\label{eq-2-2}
  \|\mathcal{T}_t(X)\|_{\infty} \leq \|X\|_{\infty}
\end{equation}
for all $X\in \mathfrak{B}(\mathsf{H})$ and all $t\geq 0$. Thus, for all $t\geq 0$, $\mathcal{T}_t$ is a
bounded linear (hence continuous) map on $\mathfrak{B}(\mathsf{H})$. We also note that, as a consequence of properties (iv) and (vi),
the map $t\rightarrow \mathcal{T}_t(X)$ is continuous with respect to the strong operator topology for all $X\in \mathfrak{B}(\mathsf{H})$.

\begin{definition}\label{def-2-2}
A QDS $\mathcal{T} =(\mathcal{T}_t)_{t\geq 0}$ on $\mathfrak{B}(\mathsf{H})$ is said to be
conservative if $\mathcal{T}_{t }(I) = I$, for every $t\geq 0$. A conservative QDS is called a quantum Markov semigroup
(QMS, for short).
\end{definition}

In the mathematical literature,  a QMS is also known as an identity-preserving quantum dynamical semigroup. The next lemma shows the
existence of a QDS with unbounded generator.

\begin{lemma}\label{lem-2-1}\cite{ch-fa}
Let $G$ be the infinitesimal generator of a strongly continuous contraction semigroup $P=(P_t)_{t \geq 0}$ on $\mathsf{H}$,
and $(L_k)_{k\geq1}$ a sequence of operators defined in $\mathsf{H}$ and with the property that for all $k\geq 1$
the domain of $L_k$ contains the domain of $G$. Assume that
\begin{equation}\label{eq-2-3}
  \langle u, Gv\rangle + \langle Gu, v\rangle + \sum_{k=1}^{\infty}\langle L_ku, L_k v\rangle =0,\quad \forall\, u, v \in \mathrm{Dom}\, G.
\end{equation}
Then there exists a minimal QDS $\mathcal{T} =(\mathcal{T}_t)_{t\geq 0}$ on $\mathfrak{B}(\mathsf{H})$
satisfying the equation
\begin{equation}\label{eq-2-4}
\begin{split}
 &\langle u,\mathcal{T}_t(X) v\rangle\\
 & =\langle u,X v\rangle
  +\int_0^t\Big[\langle u,\mathcal{T}_s(X) Gv\rangle + \langle Gu, \mathcal{T}_s(X)v\rangle
  + \sum_{k=1}^{\infty}\langle L_ku, \mathcal{T}_s(X) L_k v\rangle\Big]ds
 \end{split}
\end{equation}
for all $u$, $v \in \mathrm{Dom}\, G$ and all $X\in \mathfrak{B}(\mathsf{H})$.
\end{lemma}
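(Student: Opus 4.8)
Although Lemma~\ref{lem-2-1} is quoted from \cite{ch-fa}, let me indicate how its proof goes. The plan is to build the minimal QDS by a perturbation (Dyson) expansion whose ``free'' part is the completely positive cocycle $X\mapsto P_t^*XP_t$ generated by $G$ and whose ``interaction'' part is the completely positive kernel $X\mapsto\sum_{k}L_k^*XL_k$. For $X\in\mathfrak{B}(\mathsf{H})$ with $X\geq 0$ I set $\mathcal{T}_t^{(0)}(X)=P_t^*XP_t$ and define inductively, first as a sesquilinear form on $\mathrm{Dom}\,G\times\mathrm{Dom}\,G$,
\begin{equation*}
\langle u,\mathcal{T}_t^{(n+1)}(X)v\rangle=\langle P_tu,XP_tv\rangle+\int_0^t\sum_{k=1}^{\infty}\langle L_kP_{t-s}u,\mathcal{T}_s^{(n)}(X)L_kP_{t-s}v\rangle\,ds ,
\end{equation*}
which is meaningful since $P_s$ leaves $\mathrm{Dom}\,G$ invariant and $\mathrm{Dom}\,G\subseteq\mathrm{Dom}\,L_k$ for every $k$ (convergence of the $k$-series being checked below). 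The candidate minimal QDS is then the limit $\mathcal{T}_t(X):=\lim_{n\to\infty}\mathcal{T}_t^{(n)}(X)$, subsequently extended to all of $\mathfrak{B}(\mathsf{H})$ by writing a general $X$ as a linear combination of four positive elements.

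The first key ingredient is the \emph{energy identity} obtained by putting $X=I$ in (\ref{eq-2-3}): for $u\in\mathrm{Dom}\,G$ one has $\frac{d}{ds}\|P_su\|^2=\langle GP_su,P_su\rangle+\langle P_su,GP_su\rangle=-\sum_{k}\|L_kP_su\|^2$, hence
\begin{equation*}
\|P_tu\|^2=\|u\|^2-\int_0^t\sum_{k=1}^{\infty}\|L_kP_su\|^2\,ds ,\qquad u\in\mathrm{Dom}\,G .
\end{equation*}
This shows in particular that the $k$-series in the recursion converges, first for $X=I$ and $u=v$ and then for general $X\geq 0$ by positivity and polarization. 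The second step is to verify, by induction on $n$, that each $\mathcal{T}_t^{(n)}$ is completely positive, that $(\mathcal{T}_t^{(n)}(X))_{n\geq 0}$ is nondecreasing when $X\geq 0$, and that $\mathcal{T}_t^{(n)}(I)\leq I$; the last point is exactly where the energy identity enters, since it lets one bound the added integral term by $\int_0^t\sum_{k}\|L_kP_{t-s}u\|^2\,ds$ and telescope. Monotone boundedness then produces the strong-operator limit $\mathcal{T}_t(X)$ with $\|\mathcal{T}_t(X)\|_{\infty}\leq\|X\|_{\infty}$, and letting $n\to\infty$ inside the monotone integral turns the recursion into the fixed-point (mild) form of (\ref{eq-2-4}), which one checks is equivalent to (\ref{eq-2-4}) by differentiating under the integral sign using $\frac{d}{dt}P_tu=GP_tu$. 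Complete positivity, $\mathcal{T}_0=\mathrm{id}$, $\mathcal{T}_t(I)\leq I$, and the normality/continuity properties (v)--(vi) of Definition~\ref{def-2-1} are inherited in the limit.

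The hard part will be the semigroup property $\mathcal{T}_{t+s}=\mathcal{T}_t\circ\mathcal{T}_s$ together with a rigorous justification of the repeated interchanges of the sum over $k$, the time integral, and the limit in $n$; this is the technical core of \cite{ch-fa}. I would handle it by passing to the predual: running the analogous Dyson iteration on trace-class operators, with free evolution $\rho\mapsto P_t\rho P_t^*$ perturbed by $\rho\mapsto\sum_{k}L_k\rho L_k^*$, yields a positive, trace-nonincreasing, strongly continuous semigroup $(\mathcal{T}_{*,t})_{t\geq 0}$ on the ideal $\mathcal{I}_1(\mathsf{H})$ of trace-class operators whose dual is $(\mathcal{T}_t)_{t\geq 0}$, and on $\mathcal{I}_1(\mathsf{H})$ the semigroup identity follows from the Chapman--Kolmogorov structure of the iterated kernels, or equivalently from the resolvent identity for the Laplace transform $\widehat{\mathcal{T}}_\lambda=\int_0^{\infty}e^{-\lambda t}\mathcal{T}_t\,dt$. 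Minimality is the easiest point: if $(\widetilde{\mathcal{T}}_t)_{t\geq 0}$ is any family of positive maps satisfying (\ref{eq-2-4}), an induction on $n$ gives $\widetilde{\mathcal{T}}_t(X)\geq\mathcal{T}_t^{(n)}(X)$ for all $n$ and all $X\geq 0$, whence $\widetilde{\mathcal{T}}_t(X)\geq\mathcal{T}_t(X)$. Since all of this is carried out in full detail in \cite{ch-fa}, in the present paper it suffices to invoke that reference.
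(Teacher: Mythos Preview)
Your sketch is essentially correct and follows the standard Davies--Chebotarev--Fagnola construction via the Dyson-type iteration $\mathcal{T}_t^{(n)}$, the energy identity derived from (\ref{eq-2-3}), monotone convergence, and minimality by induction. Note, however, that the paper does not supply any proof of Lemma~\ref{lem-2-1}: it is stated purely as a quotation from \cite{ch-fa} and used as a black box, so there is no ``paper's own proof'' to compare against beyond the citation itself---and you yourself correctly conclude that invoking that reference is all that is required here.
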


\begin{remark}
Here in Lemma~\ref{lem-2-1} the name ``minimal'' means that if $\mathcal{S} =(\mathcal{S}_t)_{t\geq 0}$ is
another QDS on $\mathfrak{B}(\mathsf{H})$ satisfying
\begin{equation*}
\begin{split}
 &\langle u,\mathcal{S}_t(X) v\rangle\\
 & =\langle u,X v\rangle
  +\int_0^t\Big[\langle u,\mathcal{S}_s(X) Gv\rangle + \langle Gu, \mathcal{S}_s(X)v\rangle
  + \sum_{k=1}^{\infty}\langle L_ku, \mathcal{S}_s(X) L_k v\rangle\Big]ds
 \end{split}
\end{equation*}
for all $u$, $v \in \mathrm{Dom}\, G$ and all $X\in \mathfrak{B}(\mathsf{H})$, then it must satisfy that
\begin{equation}\label{eq-2-5}
  \mathcal{T}_t(X)\leq \mathcal{S}_t(X) \leq \|X\|_{\infty}I
\end{equation}
for all positive elements $X$ of $\mathfrak{B}(\mathsf{H})$ and all $t\geq 0$.
\end{remark}

\begin{definition}
Let $G$ and $(L_k)_{k\geq1}$ be the same as in Lemma~\ref{lem-2-1}. Then the minimal QDS
$\mathcal{T} =(\mathcal{T}_t)_{t\geq 0}$ on $\mathfrak{B}(\mathsf{H})$
satisfying (\ref{eq-2-4}) is called the minimal QDS constructed from $G$ and $(L_k)_{k\geq1}$.
\end{definition}

The next lemma is actually Corollary~4.5 of \cite{ch-fa}, which provides a simple and easily verifiable
condition for the minimal QDS to be conservative, namely to be a QMS.

\begin{lemma}\label{lem-2-2}\cite{ch-fa}
Let $G$ and $(L_k)_{k\geq1}$ be the same as in Lemma~\ref{lem-2-1}. Assume further that there exists a self-adjoint operator $C$
in $\mathsf{H}$ and a core $\mathcal{D}$ for $C$ satisfying that:
\begin{enumerate}
  \item[(a)] the domain of $C$ coincides with the domain of $G$, and for all $u \in \mathrm{Dom}\, C$, there exists a sequence
  $(u_n)_{n\geq 1}\subset \mathcal{D}$ such that both $(Gu_n)_{n\geq 1}$ and $(Cu_n)_{n\geq 1}$ converge in $\mathsf{H}$;
  \item[(b)] there exists a positive self-adjoint operator $\Phi$ in $\mathsf{H}$ such that the domain of $\Phi$ contains $\mathcal{D}$ and
  \begin{equation}\label{eq-2-6}
  -2\Re\langle u, Gu\rangle = \langle u, \Phi u \rangle\leq \langle u, Cu\rangle,\quad \forall\, u\in \mathcal{D};
  \end{equation}
  \item[(c)] for all $k\geq 1$, $L_k(\mathcal{D})\subset \mathrm{Dom}\, C$;
  \item[(d)] there exists a constant $b$ such that, for all $u\in \mathcal{D}$, the following inequality holds
  \begin{equation}\label{eq-2-7}
    2\Re\langle Cu, Gu\rangle + \sum_{k=1}^{\infty}\langle L_ku, C L_k u\rangle
    \leq b \langle u, Cu\rangle.
  \end{equation}
\end{enumerate}
Then the minimal QDS $\mathcal{T} =(\mathcal{T}_t)_{t\geq 0}$ constructed from $G$ and $(L_k)_{k\geq1}$ is conservative, namely a QMS.
\end{lemma}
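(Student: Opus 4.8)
This is Corollary~4.5 of \cite{ch-fa}; I only indicate the strategy. The plan is to reduce conservativity, i.e.\ $\mathcal{T}_t(I)=I$ for all $t\ge 0$, to the vanishing of a ``defect'' operator, to recognize that defect as the largest fixed point in $[0,I]$ of an explicit completely positive contraction, and then to use $C$ as a Lyapunov operator to force that fixed point to be $0$.

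First I would recall, from the construction behind Lemma~\ref{lem-2-1}, that the minimal QDS is the increasing limit $\mathcal{T}_t(X)=\lim_n\mathcal{T}_t^{(n)}(X)$ on positive $X$, with $\mathcal{T}_t^{(0)}(X)=P_t^*XP_t$ and $\mathcal{T}_t^{(n+1)}(X)=P_t^*XP_t+\int_0^t P_{t-s}^*\big(\sum_{k\ge 1}L_k^*\mathcal{T}_s^{(n)}(X)L_k\big)P_{t-s}\,ds$; in particular $0\le\mathcal{T}_t(I)\le I$. Passing to Laplace transforms $\widehat{\mathcal{T}}_\lambda(X)=\int_0^\infty e^{-\lambda t}\mathcal{T}_t(X)\,dt$ and using \eqref{eq-2-3} in the form $\tfrac{d}{dt}\|P_tu\|^2=-\sum_k\|L_kP_tu\|^2$, one checks that the defect $D_\lambda:=I-\lambda\widehat{\mathcal{T}}_\lambda(I)$ equals $\lim_n\Pi_\lambda^n(I)$, a decreasing limit, where $\Pi_\lambda$ is the completely positive contraction on $\mathfrak{B}(\mathsf{H})$ determined on $\mathrm{Dom}\,G$ by $\langle u,\Pi_\lambda(Z)v\rangle=\int_0^\infty e^{-\lambda t}\sum_{k\ge 1}\langle L_kP_tu,ZL_kP_tv\rangle\,dt$. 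Thus $D_\lambda$ is the largest element of $\{\,Z\in\mathfrak{B}(\mathsf{H}):0\le Z\le I,\ \Pi_\lambda(Z)=Z\,\}$; and since conservativity is equivalent to $\lambda\widehat{\mathcal{T}}_\lambda(I)=I$ for one (equivalently all) $\lambda>0$, it is equivalent to $D_\lambda=0$, i.e.\ to $\Pi_\lambda$ having no nonzero fixed point in $[0,I]$.

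The heart of the argument is to show, under (a)--(d), that $\lim_n\langle u,\Pi_\lambda^n(I)u\rangle=0$ for every $u$ in the dense domain $\mathrm{Dom}\,C$; this gives $\langle u,D_\lambda u\rangle=0$ on a dense set and hence $D_\lambda=0$. Here $C$ plays the role of a Lyapunov operator: condition \eqref{eq-2-6} makes the dissipation form $\sum_k\|L_ku\|^2=-2\Re\langle u,Gu\rangle=\langle u,\Phi u\rangle$ dominated by $\langle u,Cu\rangle$, while condition \eqref{eq-2-7} says that ``the generator applied to $C$'' is $\le bC$, so that a Gronwall estimate along $(P_t)_{t\ge0}$ keeps both $\langle P_tu,CP_tu\rangle$ and the jump contributions $\sum_k\langle L_kP_tu,CL_kP_tu\rangle$ under control by $e^{bt}\langle u,Cu\rangle$. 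Expanding $\langle u,\Pi_\lambda^n(I)u\rangle$ as an $n$-fold time integral of the free evolution interlaced with the jumps $L_k$, and feeding in \eqref{eq-2-6}--\eqref{eq-2-7}, one would bound the $n$-th term and show it tends to $0$ --- morally, that the associated classical jump process does not explode --- at least for $\lambda$ large, which then suffices by the ``one $\lambda$ implies all $\lambda$'' remark above. This quantitative non-explosion estimate, in which the constant $b$ appears, is the main positive input.

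The delicate point --- and what hypotheses (a) and (c) are tailored to handle --- is the domain bookkeeping: the Lyapunov computation formally applies $G$, $C$ and the $L_k$ to the same vector, whereas a priori one only knows $\mathrm{Dom}\,C=\mathrm{Dom}\,G$, $L_k(\mathcal{D})\subset\mathrm{Dom}\,C$ for $k\ge 1$, and that \eqref{eq-2-7} holds on the core $\mathcal{D}$. I would therefore first establish all the estimates for $u\in\mathcal{D}$, using (c) so that each jump keeps the relevant vector in $\mathrm{Dom}\,C$ and the estimate can be iterated, and then transfer them to $u\in\mathrm{Dom}\,C$ via the simultaneous approximation in (a), i.e.\ sequences $(u_n)_{n\ge1}\subset\mathcal{D}$ along which both $Gu_n$ and $Cu_n$ converge. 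Combining these with the fixed-point description of $D_\lambda$ yields $D_\lambda=0$, hence $\mathcal{T}_t(I)=I$ for all $t\ge0$ and $\mathcal{T}$ is a QMS. I expect the hardest part of a fully rigorous write-up to be exactly this interplay of three operators on overlapping but distinct domains, together with making the interlaced-evolution expansion and its Gronwall bound precise.
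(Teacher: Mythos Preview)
The paper does not give its own proof of this lemma: it is simply quoted as Corollary~4.5 of \cite{ch-fa}, with no argument supplied. So there is nothing to compare your sketch against in this paper; your outline is a reasonable summary of the Chebotarev--Fagnola strategy (defect operator as largest fixed point of $\Pi_\lambda$, $C$ as Lyapunov operator, Gronwall-type control via \eqref{eq-2-7}), but the present paper treats the result as a black box and invokes it directly.
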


\section{Quantum Bernoulli noises}\label{sec-3}

In this section, we briefly describe the main notions and facts about quantum Bernoulli noises (see, e.g. \cite{wcl,wang-chen-2016} for details).

In the following, we denote by $\Gamma$ the finite power set of $\mathbb{N}$, namely
\begin{equation}\label{eq-3-1}
    \Gamma
    = \{\,\sigma \mid \text{$\sigma \subset \mathbb{N}$ and $\#\,\sigma < \infty$} \,\},
\end{equation}
where $\#\sigma$ means the cardinality of $\sigma$ as a set.
Throughout, we assume that $(\Omega, \mathscr{F}, \mathbb{P})$ is a probability space and
$Z=(Z_n)_{n\geq 0}$ is an independent sequence of random variables on
$(\Omega, \mathscr{F}, \mathbb{P})$, which satisfies that
\begin{equation}\label{eq-3-2}
    \mathbb{P}\{Z_n = \theta_n\}=p_n,\quad
    \mathbb{P}\{Z_n = -1/\theta_n\}=q_n,\quad n\geq 0
\end{equation}
with $\theta_n = \sqrt{q_n/p_n}$, $q_n = 1-p_n$ and $0 < p_n < 1$ and, moreover,
$\mathscr{F}=(Z_n; n \geq 0)$, the $\sigma$-field generated by $Z=(Z_n)_{n\geq 0}$.

\begin{remark}\label{rem-3-1}
As is seen, $Z$ is actually a discrete-time Bernoulli stochastic process. Additionally, because $\mathscr{F}=(Z_n; n \geq 0)$,
complex random variables on $(\Omega, \mathscr{F}, \mathbb{P})$ are usually called
functionals of $Z$ (known as Bernoulli functionals in general).
\end{remark}

To be convenient, we set $\mathscr{F}_{-1}=\{\emptyset, \Omega\}$ and
$\mathscr{F}_n = \sigma(Z_k; 0\leq k \leq n)$,
the $\sigma$-field generated by $(Z_k)_{0\leq k \leq n}$,
for $ n \geq 0$.
By convention $\mathbb{E}$ will denote the expectation with respect to $\mathbb{P}$.

Let $\mathsf{L}\!^2(Z)$ be the space of square integrable
complex-valued random variables on $(\Omega, \mathscr{F}, \mathbb{P})$, namely
\begin{equation}\label{eq-3-3}
  \mathsf{L}\!^2(Z) = L^2(\Omega, \mathscr{F}, \mathbb{P}).
\end{equation}
We denote by
$\langle\cdot,\cdot\rangle$ the usual inner product of the space $\mathsf{L}\!^2(Z)$, which is defined by
 \begin{equation*}
   \langle \xi,\eta\rangle = \mathbb{E}[\overline{\xi}\eta],
\end{equation*}
and by $\|\cdot\|$ the corresponding norm.
It is known \cite{privault} that $Z$ has the chaotic representation property.
Thus $\mathsf{L}\!^2(Z)$ has $\{Z_{\sigma}\mid \sigma \in \Gamma\}$ as its orthonormal basis,
where $Z_{\emptyset}=1$ and
\begin{equation}\label{eq-3-4}
    Z_{\sigma} = \prod_{i\in \sigma}Z_i,\quad \text{$\sigma \in \Gamma$, $\sigma \neq \emptyset$},
\end{equation}
which shows that $\mathsf{L}\!^2(Z)$ is an infinite dimensional, separable complex Hilbert space.
In what follows, we call $\{Z_{\sigma}\mid \sigma \in \Gamma\}$ the canonical ONB of $\mathsf{L}\!^2(Z)$.

\begin{lemma}\label{lem-3-1}\cite{wcl}
For each $k\in \mathbb{N}$, there exists a bounded operator $\partial_k\colon
\mathsf{L}\!^2(Z)\rightarrow \mathsf{L}\!^2(Z)$
such that
\begin{equation}\label{eq-3-5}
    \partial_k Z_{\sigma} = \mathbf{1}_{\sigma}(k)Z_{\sigma\setminus k},\quad
    \partial_k^{\ast} Z_{\sigma}
    = [1-\mathbf{1}_{\sigma}(k)]Z_{\sigma\cup k},\quad
    \sigma \in \Gamma,
\end{equation}
where $\partial_k^{\ast}$ denotes the adjoint of $\partial_k$, $\sigma\setminus k=\sigma\setminus \{k\}$, $\sigma\cup k=\sigma\cup \{k\}$
and $\mathbf{1}_{\sigma}(k)$ the indicator of $\sigma$ as a subset of $\mathbb{N}$.
\end{lemma}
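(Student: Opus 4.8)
The plan is to construct $\partial_k$ explicitly on the canonical ONB and then obtain boundedness and the adjoint formula by direct computation on basis vectors. Concretely, I would first set $\partial_k Z_{\sigma} := \mathbf{1}_{\sigma}(k)Z_{\sigma\setminus k}$ for every $\sigma\in\Gamma$ and extend this by linearity to the dense subspace $\mathcal{E}$ of $\mathsf{L}\!^2(Z)$ consisting of all finite linear combinations of the $Z_{\sigma}$. To see that this yields a bounded operator, I would take an arbitrary $\xi = \sum_{\sigma\in\Gamma} c_{\sigma}Z_{\sigma}\in\mathcal{E}$ (finite sum) and note that $\partial_k\xi = \sum_{\sigma\ni k} c_{\sigma}Z_{\sigma\setminus k}$. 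The crucial observation is that $\sigma\mapsto\sigma\setminus k$ is a bijection from $\{\sigma\in\Gamma : k\in\sigma\}$ onto $\{\tau\in\Gamma : k\notin\tau\}$, with inverse $\tau\mapsto\tau\cup k$; hence the vectors $\{Z_{\sigma\setminus k} : k\in\sigma\}$ are mutually orthogonal of norm one, and therefore $\|\partial_k\xi\|^2 = \sum_{\sigma\ni k}|c_{\sigma}|^2 \leq \sum_{\sigma}|c_{\sigma}|^2 = \|\xi\|^2$. Thus $\partial_k$ is a densely defined contraction on $\mathcal{E}$ and extends uniquely to a bounded operator on $\mathsf{L}\!^2(Z)$, still denoted $\partial_k$, with $\|\partial_k\|\leq 1$.

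Next I would identify the adjoint. Introduce the operator $\nabla_k$ defined on $\mathcal{E}$ by $\nabla_k Z_{\tau} := [1-\mathbf{1}_{\tau}(k)]Z_{\tau\cup k}$; by the same bijection argument, $\nabla_k$ is also a densely defined contraction and extends to a bounded operator on $\mathsf{L}\!^2(Z)$. Since $\mathcal{E}$ is dense and both operators are bounded, it suffices to check that $\langle\partial_k Z_{\sigma}, Z_{\tau}\rangle = \langle Z_{\sigma}, \nabla_k Z_{\tau}\rangle$ for all $\sigma$, $\tau\in\Gamma$. Computing the two sides against the orthonormal basis gives $\langle\partial_k Z_{\sigma}, Z_{\tau}\rangle = \mathbf{1}_{\sigma}(k)\,\delta_{\sigma\setminus k,\,\tau}$ and $\langle Z_{\sigma}, \nabla_k Z_{\tau}\rangle = [1-\mathbf{1}_{\tau}(k)]\,\delta_{\sigma,\,\tau\cup k}$, and a short case analysis (separating $k\in\sigma$ from $k\notin\sigma$, and $k\in\tau$ from $k\notin\tau$) shows that both quantities equal $1$ exactly when $k\in\sigma$, $k\notin\tau$ and $\sigma=\tau\cup k$, and vanish in every other case. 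Hence $\nabla_k=\partial_k^{\ast}$, which is precisely the second identity in (\ref{eq-3-5}).

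Since this construction produces $\partial_k$ and $\partial_k^{\ast}$ simultaneously, nothing further is required. The only step demanding any care — and the closest thing to an obstacle — is the reindexing used twice above: one must verify that, after applying $\partial_k$ (resp. $\nabla_k$) to a linear combination, no two surviving terms collapse onto the same basis vector, which is exactly what the bijectivity of $\sigma\mapsto\sigma\setminus k$ on $\{\sigma : k\in\sigma\}$ (resp. of $\tau\mapsto\tau\cup k$ on $\{\tau : k\notin\tau\}$) guarantees. Everything else is a routine verification on the canonical ONB.
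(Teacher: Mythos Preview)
Your argument is correct and is the standard way to establish this result: define the operator on the canonical ONB, use the bijection $\sigma\mapsto\sigma\setminus k$ to get the contraction estimate, extend by density, and verify the adjoint formula by pairing basis vectors. Note, however, that the paper does not actually prove Lemma~\ref{lem-3-1}; it simply quotes it from \cite{wcl}, so there is no ``paper's own proof'' to compare against. Your write-up supplies exactly the routine verification that the citation stands in for.
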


In the language of physics, the operator $\partial_k$ and its adjoint $\partial_k^{\ast}$ are referred to as the annihilation operator and
creation operator at site $k$, respectively.

\begin{definition}\label{def-3-1}\cite{wcl}
The family $\{\partial_k, \partial_k^{\ast}\}_{k \geq 0}$ of annihilation and creation operators
is called quantum Bernoulli noises (QBN, for short).
\end{definition}

The next lemma shows that QBN satisfy the canonical anti-commutation relations (CAR) in equal-time.

\begin{lemma}\label{lem-3-2}\cite{wcl}
Let $k$, $l\in \mathbb{N}$. Then it holds true that
\begin{equation}\label{eq-3-7}
    \partial_k \partial_l = \partial_l\partial_k,\quad
    \partial_k^{\ast} \partial_l^{\ast} = \partial_l^{\ast}\partial_k^{\ast},\quad
    \partial_k^{\ast} \partial_l = \partial_l\partial_k^{\ast}\quad (k\neq l)
\end{equation}
and
\begin{equation}\label{eq-3-8}
   \partial_k\partial_k= \partial_k^{\ast}\partial_k^{\ast}=0,\quad
   \partial_k\partial_k^{\ast} + \partial_k^{\ast}\partial_k=I,
\end{equation}
where $I$ is the identity operator on $\mathsf{L}\!^2(Z)$.
\end{lemma}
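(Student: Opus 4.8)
The plan is to prove every identity by evaluating both sides on an arbitrary element $Z_\sigma$ of the canonical ONB $\{Z_\sigma\mid\sigma\in\Gamma\}$ and then invoking the explicit action formulas (\ref{eq-3-5}) of Lemma~\ref{lem-3-1}. Since $\mathrm{span}\{Z_\sigma\mid\sigma\in\Gamma\}$ is dense in $\mathsf{L}\!^2(Z)$ and all the operators appearing in (\ref{eq-3-7}) and (\ref{eq-3-8}) are bounded (being compositions of the bounded operators $\partial_k,\partial_k^*$), it suffices to check that the two sides of each identity agree on each $Z_\sigma$.

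First I would dispose of the two nilpotency relations in (\ref{eq-3-8}). Applying (\ref{eq-3-5}) twice gives $\partial_k\partial_k Z_\sigma=\mathbf{1}_\sigma(k)\,\mathbf{1}_{\sigma\setminus k}(k)\,Z_{(\sigma\setminus k)\setminus k}$, and the factor $\mathbf{1}_{\sigma\setminus k}(k)$ vanishes identically because $k\notin\sigma\setminus k$; hence $\partial_k\partial_k=0$. Dually, $\partial_k^*\partial_k^* Z_\sigma=[1-\mathbf{1}_\sigma(k)][1-\mathbf{1}_{\sigma\cup k}(k)]\,Z_{(\sigma\cup k)\cup k}$ carries the factor $1-\mathbf{1}_{\sigma\cup k}(k)=0$, so $\partial_k^*\partial_k^*=0$. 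For the anticommutation relation $\partial_k\partial_k^*+\partial_k^*\partial_k=I$ I would split into the two cases $k\in\sigma$ and $k\notin\sigma$: when $k\in\sigma$ one has $\partial_k^* Z_\sigma=0$ and $\partial_k^*\partial_k Z_\sigma=\partial_k^* Z_{\sigma\setminus k}=Z_\sigma$; when $k\notin\sigma$ one has $\partial_k Z_\sigma=0$ and $\partial_k\partial_k^* Z_\sigma=\partial_k Z_{\sigma\cup k}=Z_\sigma$. In either case the sum returns $Z_\sigma$, so the operator equals $I$.

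Next I would treat the three commutation relations (\ref{eq-3-7}), valid for $k\neq l$. The key elementary observations are that for $k\neq l$ the indicator is unaffected by inserting or deleting the other index, $\mathbf{1}_{\sigma\setminus l}(k)=\mathbf{1}_\sigma(k)=\mathbf{1}_{\sigma\cup l}(k)$, and that the set operations on the fixed indices $k,l$ commute, e.g. $(\sigma\setminus l)\setminus k=(\sigma\setminus k)\setminus l$, $(\sigma\cup l)\cup k=(\sigma\cup k)\cup l$, $(\sigma\setminus l)\cup k=(\sigma\cup k)\setminus l$. Using these, two applications of (\ref{eq-3-5}) yield $\partial_k\partial_l Z_\sigma=\mathbf{1}_\sigma(k)\mathbf{1}_\sigma(l)\,Z_{\sigma\setminus\{k,l\}}$, which is manifestly symmetric in $k$ and $l$, so $\partial_k\partial_l=\partial_l\partial_k$; similarly $\partial_k^*\partial_l^* Z_\sigma=[1-\mathbf{1}_\sigma(k)][1-\mathbf{1}_\sigma(l)]\,Z_{\sigma\cup\{k,l\}}$ is symmetric, giving $\partial_k^*\partial_l^*=\partial_l^*\partial_k^*$; and finally both $\partial_k^*\partial_l Z_\sigma$ and $\partial_l\partial_k^* Z_\sigma$ reduce to $\mathbf{1}_\sigma(l)[1-\mathbf{1}_\sigma(k)]\,Z_{(\sigma\cup k)\setminus l}$, establishing $\partial_k^*\partial_l=\partial_l\partial_k^*$.

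There is no genuine conceptual obstacle here; the one thing demanding care is the bookkeeping of the indicator prefactors and the nested set operations, and in particular making sure the hypothesis $k\neq l$ is invoked exactly where it is needed — it is precisely the failure of $(\sigma\setminus k)\cup k=(\sigma\cup k)\setminus k$ when $k=l$ that produces the extra term $I$ in the anticommutator in (\ref{eq-3-8}) rather than the clean commutation of (\ref{eq-3-7}). An equivalent but slightly more cumbersome route would phrase everything through matrix elements $\langle Z_\tau,\cdot\,Z_\sigma\rangle$ together with the adjoint relation between $\partial_k$ and $\partial_k^*$; I regard the direct computation on the ONB sketched above as the cleanest argument.
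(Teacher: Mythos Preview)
Your proof is correct. Note, however, that the paper does not supply its own proof of this lemma: it is quoted from \cite{wcl} and stated without argument, so there is nothing in the present paper to compare your approach against. That said, the argument you give --- evaluating each operator identity on the basis vectors $Z_\sigma$ via the explicit formulas~(\ref{eq-3-5}) and then using boundedness plus density of $\mathrm{span}\{Z_\sigma\}$ --- is exactly the natural (and essentially the only reasonable) route, and it is the one used in the cited reference.
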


From Lemma~\ref{lem-3-1} and Lemma~\ref{lem-3-2}, one can easily get the following useful result.

\begin{lemma}\label{lem-3-3}
Let $k\in \mathbb{N}$. Then $\partial_k^*\partial_k$ is projection operator on $\mathsf{L}\!^2(Z)$, and moreover
\begin{equation}\label{eq-3}
  \partial_k^{\ast}\partial_k Z_{\sigma}
    =\mathbf{1}_{\sigma}(k)Z_{\sigma},\quad
    \sigma \in \Gamma.
\end{equation}
\end{lemma}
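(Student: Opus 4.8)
The plan is to establish the explicit action formula (\ref{eq-3}) first and then deduce that $\partial_k^*\partial_k$ is a projection, either directly from that formula or from the CAR of Lemma~\ref{lem-3-2}.

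First I would compute $\partial_k^{\ast}\partial_k Z_{\sigma}$ by applying the two identities in (\ref{eq-3-5}) in succession, distinguishing the cases $k\in\sigma$ and $k\notin\sigma$. When $k\notin\sigma$, the first application gives $\partial_k Z_{\sigma}=0$, so the composition annihilates $Z_{\sigma}$; since $\mathbf{1}_{\sigma}(k)=0$ this agrees with the claimed value. When $k\in\sigma$, we get $\partial_k Z_{\sigma}=Z_{\sigma\setminus k}$, and since $k\notin\sigma\setminus k$ the second application yields $\partial_k^{\ast}Z_{\sigma\setminus k}=[1-\mathbf{1}_{\sigma\setminus k}(k)]Z_{(\sigma\setminus k)\cup k}=Z_{\sigma}$, which again equals $\mathbf{1}_{\sigma}(k)Z_{\sigma}$ because $\mathbf{1}_{\sigma}(k)=1$. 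Combining the two cases gives (\ref{eq-3}).

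Next I would verify that $P:=\partial_k^{\ast}\partial_k$ is an orthogonal projection, i.e.\ that $P^{\ast}=P$ and $P^2=P$. Self-adjointness is immediate: $\partial_k$ is bounded by Lemma~\ref{lem-3-1}, hence $P^{\ast}=(\partial_k^{\ast}\partial_k)^{\ast}=\partial_k^{\ast}\partial_k=P$. For idempotency one can argue straight from (\ref{eq-3}): for every $\sigma\in\Gamma$ one has $P^2 Z_{\sigma}=\mathbf{1}_{\sigma}(k)\,P Z_{\sigma}=\mathbf{1}_{\sigma}(k)^2 Z_{\sigma}=\mathbf{1}_{\sigma}(k)Z_{\sigma}=P Z_{\sigma}$, since $\mathbf{1}_{\sigma}(k)\in\{0,1\}$; as $\{Z_{\sigma}\mid\sigma\in\Gamma\}$ is an orthonormal basis of $\mathsf{L}\!^2(Z)$ and $P$ is bounded, this forces $P^2=P$. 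Alternatively one may bypass (\ref{eq-3}) entirely and use Lemma~\ref{lem-3-2}: $P^2=\partial_k^{\ast}(\partial_k\partial_k^{\ast})\partial_k=\partial_k^{\ast}(I-\partial_k^{\ast}\partial_k)\partial_k=P-\partial_k^{\ast}\partial_k^{\ast}\partial_k\partial_k=P$, using $\partial_k\partial_k=\partial_k^{\ast}\partial_k^{\ast}=0$. There is no genuine obstacle here; the only point requiring a little care is the case analysis in the first step, together with the observation that $k\notin\sigma\setminus k$, which makes the indicator in the creation formula of (\ref{eq-3-5}) equal to $1$ upon reinserting $k$.
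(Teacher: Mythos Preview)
Your proposal is correct and follows precisely the route the paper indicates: the paper gives no detailed proof but simply states that the result is obtained from Lemma~\ref{lem-3-1} and Lemma~\ref{lem-3-2}, which is exactly what you do (the case analysis via (\ref{eq-3-5}) for the action formula, and either the basis argument or the CAR from (\ref{eq-3-8}) for idempotency).
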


The number operator $N$ in $\mathsf{L}\!^2(Z)$ can be defined in many different ways. The following definition is taken from \cite{wang-chen-2016}.

\begin{definition}\label{def-3-2}
The number operator $N$ in $\mathsf{L}\!^2(Z)$ is the one given by
\begin{equation}\label{eq-3-9}
  N\xi = \sum_{\sigma \in \Gamma} \#(\sigma) \langle Z_{\sigma},\xi\rangle Z_{\sigma},\quad \xi \in \mathrm{Dom}\,N
\end{equation}
with
\begin{equation}\label{eq-3-10}
\mathrm{Dom}\,N =\Big\{\, \xi \in \mathsf{L}\!^2(Z) \Bigm| \sum_{\sigma \in \Gamma} \big(\#(\sigma)\big )^2 |\langle Z_{\sigma},\xi\rangle |^2 <\infty \,\Big\},
\end{equation}
where $\#(\sigma)$ means the cardinality of $\sigma$ as a set.
\end{definition}

Clearly, $N$ is  densely-defined, positive and self-adjoint, and moreover $\mathcal{D}\subset \mathrm{Dom}\,N$,
where $\mathcal{D}=\mathrm{span}\,\{Z_{\sigma} \mid \sigma\in \Gamma\}$
denotes the linear subspace (manifold) spanned by the canonical ONB of $\mathsf{L}\!^2(Z)$.

\section{Weighted number operator}\label{sec-4}

In this section, we introduce a class of operators densely defined in the space $\textsf{L}\!^2(Z)$,
which we call the 2D-weighted number operators.
Among others, we examine structures of these operators, and establish their commutation relations with the annihilation as well as the creation operators.

\subsection{Definition and structure}

We first clarify the meaning of convergence of a double vector series in $\textsf{L}\!^2(Z)$.

A double vector series $\sum_{j,k=0}^{\infty}\xi_{jk}$ in $\textsf{L}\!^2(Z)$ is said to converge naturally
if its sequence of square-array partial sums
\begin{equation*}
   s_n = \sum_{j,k=0}^n\xi_{jk}= \sum_{j=0}^n\sum_{k=0}^n\xi_{jk},\quad n\geq 0
\end{equation*}
converges in $\textsf{L}\!^2(Z)$ as $n\rightarrow \infty$. In that case, we write $\sum_{j,k=0}^{\infty}\xi_{jk}= \lim_{n\to \infty}s_n$.

\begin{proposition}\label{prop-4-1}
Let $w\colon \mathbb{N}\times \mathbb{N}\rightarrow \mathbb{R}$ be a nonnegative function.
Then, for all $\sigma\in \Gamma$ and all $n\geq 1$, it holds true that
\begin{equation}\label{eq-4-1}
\begin{split}
   \sum_{j,k=0}^n & w(j,k)\partial_k^*\partial_j\partial_j^*\partial_kZ_{\sigma}\\
      & = \Big[\sum_{j=0}^n \mathbf{1}_{\sigma}(j)w(j,j)
       + \sum_{j,k=0}^n \big(1-\mathbf{1}_{\sigma}(j)\big) \mathbf{1}_{\sigma}(k)w(j,k)\Big]Z_{\sigma}.
\end{split}
\end{equation}
Moreover, if the function $w$ satisfies that $\sup_{k\geq 0}\sum_{j=0}^{\infty} w(j,k)<\infty$, then the double vector series
\begin{equation*}
  \sum_{j,k=0}^{\infty} w(j,k)\partial_k^*\partial_j\partial_j^*\partial_kZ_{\sigma}
\end{equation*}
converges naturally for each $\sigma \in \Gamma$.
\end{proposition}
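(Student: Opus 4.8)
The plan is to compute the action of the operator $\partial_k^*\partial_j\partial_j^*\partial_k$ on a basis vector $Z_\sigma$ directly, using the explicit formulas in Lemma~\ref{lem-3-1}, Lemma~\ref{lem-3-2} and Lemma~\ref{lem-3-3}, and then sum over $0\le j,k\le n$. First I would split the double sum over the square $[0,n]\times[0,n]$ into the diagonal part $j=k$ and the off-diagonal part $j\neq k$. On the diagonal, $\partial_k^*\partial_k\partial_k^*\partial_k = (\partial_k^*\partial_k)^2 = \partial_k^*\partial_k$ since $\partial_k^*\partial_k$ is a projection (Lemma~\ref{lem-3-3}), so the diagonal term contributes $\sum_{j=0}^n w(j,j)\partial_j^*\partial_j Z_\sigma = \big(\sum_{j=0}^n \mathbf{1}_\sigma(j)w(j,j)\big)Z_\sigma$ by~(\ref{eq-3}). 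For $j\neq k$, I would use the commutation relation $\partial_j\partial_j^* = I - \partial_j^*\partial_j$ from~(\ref{eq-3-8}) to rewrite $\partial_k^*\partial_j\partial_j^*\partial_k = \partial_k^*(I-\partial_j^*\partial_j)\partial_k = \partial_k^*\partial_k - \partial_k^*\partial_j^*\partial_j\partial_k$, and since $j\neq k$ the operators $\partial_j,\partial_j^*$ commute with $\partial_k,\partial_k^*$ (Lemma~\ref{lem-3-2}), so this equals $\partial_k^*\partial_k - \partial_j^*\partial_j\,\partial_k^*\partial_k$, i.e. $\partial_k^*\partial_k(I-\partial_j^*\partial_j)$. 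Applying~(\ref{eq-3}) then gives $\partial_k^*\partial_j\partial_j^*\partial_k Z_\sigma = \mathbf{1}_\sigma(k)\big(1-\mathbf{1}_\sigma(j)\big)Z_\sigma$ for $j\neq k$.

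Adding the two contributions yields
\begin{equation*}
   \sum_{j,k=0}^n w(j,k)\partial_k^*\partial_j\partial_j^*\partial_k Z_\sigma
   = \Big[\sum_{j=0}^n \mathbf{1}_\sigma(j)w(j,j) + \sum_{\substack{j,k=0\\ j\neq k}}^n \big(1-\mathbf{1}_\sigma(j)\big)\mathbf{1}_\sigma(k)w(j,k)\Big]Z_\sigma.
\end{equation*}
To match the stated form~(\ref{eq-4-1}) exactly, I would observe that the terms with $j=k$ in the second sum vanish: when $j=k$, the factor $\big(1-\mathbf{1}_\sigma(j)\big)\mathbf{1}_\sigma(k) = \big(1-\mathbf{1}_\sigma(j)\big)\mathbf{1}_\sigma(j) = 0$. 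Hence the restricted sum over $j\neq k$ may be replaced by the full sum over $0\le j,k\le n$ without changing its value, which gives precisely~(\ref{eq-4-1}).

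For the convergence claim, note that~(\ref{eq-4-1}) shows each square-array partial sum $s_n$ is a scalar multiple $c_n Z_\sigma$ of the fixed unit vector $Z_\sigma$, where $c_n = \sum_{j=0}^n \mathbf{1}_\sigma(j)w(j,j) + \sum_{j,k=0}^n\big(1-\mathbf{1}_\sigma(j)\big)\mathbf{1}_\sigma(k)w(j,k)$ is a nondecreasing sequence of nonnegative reals (since $w\ge 0$). So it suffices to show $(c_n)$ is bounded. The first piece is bounded by $\sum_{j\in\sigma} w(j,j) \le \#(\sigma)\sup_{k\ge 0}\sum_{j=0}^\infty w(j,k) < \infty$. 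For the second piece, since $\big(1-\mathbf{1}_\sigma(j)\big)\le 1$, it is bounded by $\sum_{k\in\sigma}\sum_{j=0}^n w(j,k) \le \#(\sigma)\sup_{k\ge 0}\sum_{j=0}^\infty w(j,k) < \infty$, using the hypothesis $\sup_{k\ge 0}\sum_{j=0}^\infty w(j,k)<\infty$ and finiteness of $\sigma$. Therefore $(c_n)$ converges, and hence $s_n = c_n Z_\sigma$ converges in $\textsf{L}\!^2(Z)$, which is what natural convergence of the double vector series means.

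The computation is essentially routine once the commutation relations are invoked in the right order; the only point requiring a little care is the manipulation for $j\neq k$ — making sure the factorization $\partial_k^*\partial_j\partial_j^*\partial_k = \partial_k^*\partial_k(I-\partial_j^*\partial_j)$ is justified by commutativity of QBN at distinct sites — and the bookkeeping observation that the $j=k$ terms of the off-diagonal-type sum vanish, which lets us write the answer as an unrestricted double sum. I expect no genuine obstacle here; the main value of the proposition is setting up the scalar coefficient that will define the 2D-weighted number operator $S_w$ and its spectral decomposition.
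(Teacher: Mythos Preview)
Your proposal is correct and follows essentially the same route as the paper's proof: split the square sum into diagonal and off-diagonal parts, use the projection property on the diagonal and the CAR identity $\partial_j\partial_j^*=I-\partial_j^*\partial_j$ together with commutativity at distinct sites off the diagonal, then observe that the $j=k$ terms in the off-diagonal-type sum vanish so the double sum may be written unrestricted. Your convergence argument (monotone bounded scalar sequence, bounded by $2\#(\sigma)\sup_{k}\sum_j w(j,k)$) is likewise the same as the paper's, with the small cosmetic addition that you make the monotonicity of $(c_n)$ explicit.
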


\begin{proof}
Let $\sigma \in \Gamma$, $n\geq 1$. Then, by CAR in equal time as well as properties of the product operator $\partial_n^*\partial_n$, we have
\begin{equation*}
\begin{split}
   \sum_{j,k=0}^n &w(j,k)\partial_k^*\partial_j\partial_j^*\partial_kZ_{\sigma}\\
   & = \sum_{j=0}^n\Big[w(j,j)\partial_j^*\partial_jZ_{\sigma}  + \sum_{k=0,k\neq j}^n w(j,k)\big(I-\partial_j^*\partial_j\big)\partial_k^*\partial_kZ_{\sigma}\Big]\\
   & = \sum_{j=0}^n\Big[\mathbf{1}_{\sigma}(j)w(j,j)Z_{\sigma}  + \sum_{k=0,k\neq j}^n \big(1-\mathbf{1}_{\sigma}(j)\big)\mathbf{1}_{\sigma}(k)w(j,k)Z_{\sigma}\Big]\\
   & = \sum_{j=0}^n\Big[\mathbf{1}_{\sigma}(j)w(j,j)Z_{\sigma}  + \sum_{k=0}^n \big(1-\mathbf{1}_{\sigma}(j)\big)\mathbf{1}_{\sigma}(k)w(j,k)Z_{\sigma}\Big]\\
   & = \Big[\sum_{j=0}^n\mathbf{1}_{\sigma}(j)w(j,j) + \sum_{j,k=0}^n \big(1-\mathbf{1}_{\sigma}(j)\big)\mathbf{1}_{\sigma}(k)w(j,k)\Big]Z_{\sigma}.
\end{split}
\end{equation*}
Now let the function $w$ satisfy that $\sup_{k\geq 0}\sum_{j=0}^{\infty} w(j,k)<\infty$. Then, by a calculation,  we find
\begin{equation*}
\begin{split}
  &\sum_{j=0}^{\infty}\mathbf{1}_{\sigma}(j)w(j,j) + \sum_{j,k=0}^{\infty} \big(1-\mathbf{1}_{\sigma}(j)\big)\mathbf{1}_{\sigma}(k)w(j,k)\\
  & = \sum_{j=0}^{\infty}\mathbf{1}_{\sigma}(j)w(j,j) + \sum_{k=0}^{\infty} \mathbf{1}_{\sigma}(k)\sum_{j}^{\infty}\big(1-\mathbf{1}_{\sigma}(j)\big)w(j,k)\\
  &\leq \sum_{j=0}^{\infty}\mathbf{1}_{\sigma}(j)\alpha + \sum_{k=0}^{\infty} \mathbf{1}_{\sigma}(k)\sum_{j=0}^{\infty}w(j,k)\\
  &\leq \sum_{j=0}^{\infty}\mathbf{1}_{\sigma}(j)\alpha + \sum_{k=0}^{\infty} \mathbf{1}_{\sigma}(k)\alpha\\
  & = 2\alpha\#(\sigma)\\
  &<\infty,
\end{split}
\end{equation*}
where $\alpha = \sup_{k\geq 0}\sum_{j=0}^{\infty}w(j,k)$,
which together with the fact
\begin{equation*}
\begin{split}
  &\lim_{n\to \infty}\Big[\sum_{j=0}^n  \mathbf{1}_{\sigma}(j)w(j,j)  + \sum_{j,k=0}^n \big(1-\mathbf{1}_{\sigma}(j)\big)\mathbf{1}_{\sigma}(k)w(j,k)\Big]\\
  & =   \sum_{j=0}^{\infty}\mathbf{1}_{\sigma}(j)w(j,j) + \sum_{j,k=0}^{\infty} \big(1-\mathbf{1}_{\sigma}(j)\big)\mathbf{1}_{\sigma}(k)w(j,k)
\end{split}
\end{equation*}
implies that
\begin{equation*}
  \sum_{j,k=0}^n w(j,k)\partial_k^*\partial_j\partial_j^*\partial_kZ_{\sigma}
  = \Big[\sum_{j=0}^n\mathbf{1}_{\sigma}(j)w(j,j) + \sum_{j,k=0}^n \big(1-\mathbf{1}_{\sigma}(j)\big)\mathbf{1}_{\sigma}(k)w(j,k)\Big]Z_{\sigma}
\end{equation*}
converges as $n\rightarrow \infty$, namely the double vector series
\begin{equation*}
  \sum_{j,k=0}^{\infty} w(j,k)\partial_k^*\partial_j\partial_j^*\partial_kZ_{\sigma}
\end{equation*}
converges naturally.
\end{proof}

\begin{definition}\label{def-4-1}
Let $w\colon \mathbb{N}\times \mathbb{N}\rightarrow \mathbb{R}$ be a nonnegative function with the property that
\begin{equation}\label{eq-4-2}
\sup_{k\geq 0}\sum_{j=0}^{\infty} w(j,k)<\infty.
\end{equation}
Then the 2D-weighted number operator $S_w$ associated with $w$ is the one
in $\textsf{L}\!^2(Z)$ given by
\begin{equation}\label{eq-4-3}
  S_w\xi=\sum_{j,k=0}^{\infty}w(j,k)\partial_k^*\partial_j\partial_j^*\partial_k\xi,\quad \xi \in \mathrm{Dom}\, S_w
\end{equation}
with
\begin{equation}\label{eq-4-4}
  \mathrm{Dom}\, S_w =\Big\{\, \xi \in \textsf{L}\!^2(Z) \Bigm| \sum_{j,k=0}^{\infty}w(j,k)\partial_k^*\partial_j\partial_j^*\partial_k\xi\ \ \mbox{converges naturally}               \,\Big\}.
\end{equation}
\end{definition}

Recall that the system $\{Z_{\sigma} \mid \sigma\in \Gamma\}$ is an orthonormal basis of $\textsf{L}\!^2(Z)$, which is called the canonical ONB
of $\textsf{L}\!^2(Z)$.
By Proposition~\ref{prop-4-1}, we find that $\{Z_{\sigma}\mid \sigma\in \Gamma\} \subset \mathrm{Dom}\,S_w$, which implies that
$\mathcal{D}=\mathrm{span}\, \{Z_{\sigma}\mid \sigma\in \Gamma\} \subset \mathrm{Dom}\,S_w$, hence
$S_w$ is densely defined in $\mathsf{L}\!^2(Z)$.

In what follows, when we say a 2D-weighted number operator we mean the 2D-weighted number operator associated with a nonnegative function $w$ satisfying
condition~(\ref{eq-4-2}).

The next theorem offers a characterization of the 2D-weighted number operators,
which shows that a 2D-weighted number operator admits a spectral decomposition in terms of the canonical ONB of $\textsf{L}\!^2(Z)$.

\begin{theorem}\label{thr-4-2}
Let $w\colon \mathbb{N}\times \mathbb{N}\rightarrow \mathbb{R}$ be a nonnegative function satisfying condition~(\ref{eq-4-2}).
Then a vector $\xi \in \mathsf{L}\!^2(Z)$ belongs to $\mathrm{Dom}\, S_w$ if and only if it satisfies that
\begin{equation}\label{eq-4-5}
  \sum_{\sigma \in \Gamma}\big(\vartheta_w(\sigma)\big)^2|\langle Z_{\sigma},\xi\rangle|^2<\infty,
\end{equation}
where $\vartheta_w$ is the function on $\Gamma$ given by
\begin{equation}\label{eq-4-6}
\vartheta_w(\sigma)
= \sum_{j=0}^{\infty}\mathbf{1}_{\sigma}(j)w(j,j) + \sum_{j,k=0}^{\infty} \big(1-\mathbf{1}_{\sigma}(j)\big)\mathbf{1}_{\sigma}(k)w(j,k),\quad\sigma \in \Gamma.
\end{equation}
In that case, one has
\begin{equation}\label{eq-4-7}
  S_w\xi = \sum_{\sigma \in \Gamma}\vartheta_w(\sigma)\langle Z_{\sigma},\xi\rangle Z_{\sigma}.
\end{equation}
\end{theorem}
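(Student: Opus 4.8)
The plan is to exploit the diagonalization already established at the level of the canonical ONB in Proposition~\ref{prop-4-1}: there we have the exact identity
\[
  \sum_{j,k=0}^n w(j,k)\partial_k^*\partial_j\partial_j^*\partial_k Z_{\sigma}
  = \Theta_n(\sigma)\, Z_{\sigma},
\]
where $\Theta_n(\sigma) = \sum_{j=0}^n \mathbf{1}_{\sigma}(j)w(j,j) + \sum_{j,k=0}^n (1-\mathbf{1}_{\sigma}(j))\mathbf{1}_{\sigma}(k)w(j,k)$ increases monotonically (by nonnegativity of $w$) to $\vartheta_w(\sigma)$. Fix $\xi\in\mathsf{L}\!^2(Z)$ and write $\xi = \sum_{\sigma\in\Gamma}\langle Z_{\sigma},\xi\rangle Z_{\sigma}$. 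Since each $\partial_k,\partial_k^*$ is bounded (Lemma~\ref{lem-3-1}), the finite operator $A_n := \sum_{j,k=0}^n w(j,k)\partial_k^*\partial_j\partial_j^*\partial_k$ is bounded, hence continuous, so
\[
  A_n\xi = \sum_{\sigma\in\Gamma}\langle Z_{\sigma},\xi\rangle A_n Z_{\sigma} = \sum_{\sigma\in\Gamma}\Theta_n(\sigma)\langle Z_{\sigma},\xi\rangle Z_{\sigma}.
\]
The first step is to justify this interchange of $A_n$ with the (infinite) ONB expansion of $\xi$ — this is immediate from boundedness and continuity of $A_n$.

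Next I would compute norms using Parseval: $\|A_n\xi\|^2 = \sum_{\sigma\in\Gamma}\big(\Theta_n(\sigma)\big)^2|\langle Z_{\sigma},\xi\rangle|^2$, and more generally, for $m\le n$,
\[
  \|A_n\xi - A_m\xi\|^2 = \sum_{\sigma\in\Gamma}\big(\Theta_n(\sigma)-\Theta_m(\sigma)\big)^2|\langle Z_{\sigma},\xi\rangle|^2.
\]
Now the core dichotomy. Suppose condition~(\ref{eq-4-5}) holds, i.e. $\sum_{\sigma}\big(\vartheta_w(\sigma)\big)^2|\langle Z_{\sigma},\xi\rangle|^2<\infty$. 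Since $0\le \Theta_n(\sigma)\le\vartheta_w(\sigma)$ and $\Theta_n(\sigma)\to\vartheta_w(\sigma)$ pointwise in $\sigma$, the dominated convergence theorem (for sums, i.e. on the measure space $\Gamma$ with counting measure) applied to the summand $\big(\vartheta_w(\sigma)-\Theta_n(\sigma)\big)^2|\langle Z_{\sigma},\xi\rangle|^2 \le \big(\vartheta_w(\sigma)\big)^2|\langle Z_{\sigma},\xi\rangle|^2$ shows $\|A_n\xi - \eta\|^2\to 0$ where $\eta := \sum_{\sigma}\vartheta_w(\sigma)\langle Z_{\sigma},\xi\rangle Z_{\sigma}$ (which lies in $\mathsf{L}\!^2(Z)$ precisely because (\ref{eq-4-5}) holds). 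Hence $A_n\xi$ converges, $\xi\in\mathrm{Dom}\,S_w$, and $S_w\xi=\eta$, giving both the "if" direction and formula~(\ref{eq-4-7}). Conversely, if $\xi\in\mathrm{Dom}\,S_w$, then $(A_n\xi)_n$ is Cauchy, hence norm-bounded: $\sup_n\|A_n\xi\|^2 = \sup_n \sum_{\sigma}\big(\Theta_n(\sigma)\big)^2|\langle Z_{\sigma},\xi\rangle|^2 =: M<\infty$. For any fixed finite $\Lambda\subset\Gamma$, $\sum_{\sigma\in\Lambda}\big(\Theta_n(\sigma)\big)^2|\langle Z_{\sigma},\xi\rangle|^2\le M$; letting $n\to\infty$ (finite sum, so just pointwise limits) gives $\sum_{\sigma\in\Lambda}\big(\vartheta_w(\sigma)\big)^2|\langle Z_{\sigma},\xi\rangle|^2\le M$, and since $\Lambda$ is arbitrary, (\ref{eq-4-5}) follows.

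The main obstacle — really the only delicate point — is the monotone/dominated convergence bookkeeping over the index set $\Gamma$: one must be careful that $\Theta_n(\sigma)$ is genuinely monotone in $n$ for each fixed $\sigma$ (which uses $w\ge 0$ together with the square-array partial-sum convention, so that passing from $n$ to $n+1$ only adds nonnegative terms), and that the natural (square-array) convergence of the double series is what makes $A_n$ the right approximating sequence matching Definition~\ref{def-4-1}. Once monotonicity is in hand, the uniform bound $\Theta_n(\sigma)\le\vartheta_w(\sigma)$ plus the finiteness $\vartheta_w(\sigma)<\infty$ (established in the proof of Proposition~\ref{prop-4-1}, where $\vartheta_w(\sigma)\le 2\alpha\#(\sigma)$ with $\alpha=\sup_{k}\sum_j w(j,k)$) makes the dominated convergence argument routine, and the converse is a standard Fatou-type truncation. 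I would also remark in passing that the bound $\vartheta_w(\sigma)\le 2\alpha\,\#(\sigma)$ shows $\mathrm{Dom}\,N\subset\mathrm{Dom}\,S_w$, foreshadowing later boundedness criteria, though that is not needed for the present proof.
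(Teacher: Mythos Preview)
Your proposal is correct and follows essentially the same route as the paper's own proof: both use boundedness of the finite partial-sum operator to diagonalize $A_n\xi$ in the canonical ONB via Proposition~\ref{prop-4-1}, then apply dominated convergence (with dominating summand $(\vartheta_w(\sigma))^2|\langle Z_\sigma,\xi\rangle|^2$; the paper uses the looser $4(\vartheta_w(\sigma))^2$) for the ``if'' direction, and a Fatou/truncation argument on the uniformly bounded norms $\|A_n\xi\|^2$ for the ``only if'' direction. Your finite-$\Lambda$ truncation is exactly the Fatou step the paper invokes, so there is no substantive difference.
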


\begin{proof}
First let $\xi \in \textsf{L}^2(Z)$ with
$\sum_{\sigma \in \Gamma}\big(\vartheta_w(\sigma)\big)^2|\langle Z_{\sigma},\xi\rangle|^2<\infty$.
We show that $\xi \in \mathrm{Dom}\, S_w$ and (\ref{eq-4-7}) holds.
To do so, we set
\begin{equation*}
\eta= \sum_{\sigma \in \Gamma}\vartheta_w(\sigma)\langle Z_{\sigma},\xi\rangle Z_{\sigma}
\end{equation*}
and
\begin{equation*}
   \eta_n = \sum_{j,k=0}^nw(j,k)\partial_k^*\partial_j\partial_j^*\partial_k\xi,\quad n\geq 0.
\end{equation*}
Then, for each $n\geq 0$, it follows from the boundedness of operators $\partial_k^*\partial_j\partial_j^*\partial_k$
as well as the norm convergence of the expansion
$\xi = \sum_{\sigma \in \Gamma} \langle Z_{\sigma}, \xi\rangle Z_{\sigma}$
that
\begin{equation*}
  \eta_n=\sum_{\sigma \in \Gamma}\langle Z_{\sigma}, \xi\rangle\sum_{j,k=0}^nw(j,k)  \partial_k^*\partial_j\partial_j^*\partial_kZ_{\sigma},
\end{equation*}
which together with Proposition~\ref{prop-4-1} gives
\begin{equation}\label{eq-4-8}
  \eta_n=\sum_{\sigma \in \Gamma} \vartheta_w^{(n)}(\sigma)\langle Z_{\sigma}, \xi\rangle Z_{\sigma},
\end{equation}
where
\begin{equation}\label{eq-4-9}
  \vartheta_w^{(n)}(\sigma)
  = \sum_{j=0}^n \mathbf{1}_{\sigma}(j)w(j,j)
       + \sum_{j,k=0}^n \big(1-\mathbf{1}_{\sigma}(j)\big) \mathbf{1}_{\sigma}(k)w(j,k).
\end{equation}
Thus, by the definition of $\eta$, we have
\begin{equation*}
\|\eta - \eta_n\|^2 = \sum_{\sigma \in \Gamma}|\vartheta_w(\sigma)-\vartheta_w^{(n)}(\sigma)|^2|\langle Z_{\sigma}, \xi\rangle|^2,\quad n\geq 0.
\end{equation*}
From (\ref{eq-4-9}), we find that $0\leq \vartheta_w^{(n)}(\sigma) \leq \vartheta_w(\sigma)$ for all $\sigma \in \Gamma$ and $n\geq 1$, which implies that
\begin{equation*}
  |\vartheta_w(\sigma)-\vartheta_w^{(n)}(\sigma)|^2
  \leq \big(\vartheta_w(\sigma) + \vartheta_w^{(n)}(\sigma)\big)^2
  \leq 4 \big(\vartheta_w(\sigma)\big)^2,\quad \sigma \in \Gamma ,\, n\geq 0.
\end{equation*}
Note that the function $\sigma \mapsto 4\big(\vartheta_w(\sigma)\big)^2$ satisfies that
\begin{equation*}
\sum_{\sigma \in \Gamma}4\big(\vartheta_w(\sigma)\big)^2|\langle Z_{\sigma},\xi\rangle|^2<\infty.
\end{equation*}
On the other hand, we easily see that $\vartheta_w^{(n)}(\sigma)\rightarrow \vartheta_w(\sigma)$ for all $\sigma\in \Gamma$,
which implies that
\begin{equation*}
  |\vartheta_w(\sigma)-\vartheta_w^{(n)}(\sigma)|^2|\langle Z_{\sigma}, \xi\rangle|^2 \rightarrow 0\quad\quad (\mbox{$n\rightarrow \infty$})
\end{equation*}
for all $\sigma\in \Gamma$.
Thus, by the dominated convergence theorem, we get
\begin{equation*}
\lim_{n\to \infty}\|\eta - \eta_n\|^2
= \lim_{n\to \infty}\sum_{\sigma \in \Gamma}|\vartheta_{u,v}(\sigma)-\vartheta_{u,v}^{(n)}(\sigma)|^2|\langle Z_{\sigma}, \xi\rangle|^2
=0,
\end{equation*}
which implies that $\eta_n$ converges to $\eta$ as $n\rightarrow \infty$,
namely the vector series
\begin{equation*}
  \sum_{j,k=0}^{\infty}w(j,k)\partial_k^*\partial_j\partial_j^*\partial_k\xi
\end{equation*}
converges naturally. Hence $\xi \in \mathrm{Dom}\, S_w$, and
\begin{equation*}
  S_w\xi
   = \sum_{j,k=0}^{\infty}w(j,k)\partial_k^*\partial_j\partial_j^*\partial_k\xi
   = \lim_{n\to \infty}\sum_{j,k=0}^nw(j,k)\partial_k^*\partial_j\partial_j^*\partial_k\xi
   = \lim_{n\to \infty}\eta_n
   = \eta,
\end{equation*}
which, together with $\eta = \sum_{\sigma \in \Gamma}\vartheta_w(\sigma)\langle Z_{\sigma},\xi\rangle Z_{\sigma}$, yields
(\ref{eq-4-7}).

Now, let $\xi \in \mathrm{Dom}\,S_w$. We show that
$\sum_{\sigma \in \Gamma}\big(\vartheta_w(\sigma)\big)^2|\langle Z_{\sigma},\xi\rangle|^2<\infty$.
In fact, by the definition of $\mathrm{Dom}\,S_w$,  the vector sequence
\begin{equation*}
   \eta_n = \sum_{j=0}^n\sum_{k=0}^nw(j,k)\partial_k^*\partial_j\partial_j^*\partial_k\xi,\quad n\geq 0,
\end{equation*}
converges in $\textsf{L}\!^2(Z)$. Thus there exists a finite constant $c\geq 0$ such that
\begin{equation*}
  \|\eta_n\|^2 \leq c,\quad \forall\, n\geq 0,
\end{equation*}
which together with (\ref{eq-4-8}) yields that
\begin{equation}\label{eq-10}
  \sum_{\sigma \in \Gamma} \big(\vartheta_w^{(n)}(\sigma)\big)^2 |\langle Z_{\sigma}, \xi\rangle|^2 = \|u_n\|^2\leq c,\quad \forall\, n\geq 0.
\end{equation}
Note that $\vartheta_w^{(n)}(\sigma)\rightarrow \vartheta_w(\sigma)$ as $n\to \infty$, for all $\sigma \in \Gamma$. Thus, by the well-known Fatou theorem,
we get from (\ref{eq-10}) that
\begin{equation*}
  \sum_{\sigma \in \Gamma} \big(\vartheta_w(\sigma)\big)^2 |\langle Z_{\sigma}, \xi\rangle|^2 \leq c< \infty.
\end{equation*}
This completes the proof.
\end{proof}

\begin{remark}\label{rem-4-1}
Let $w\colon \mathbb{N}\times \mathbb{N}\rightarrow \mathbb{R}$ be a nonnegative function satisfying condition~(\ref{eq-4-2}).
Then, by Theorem~\ref{thr-4-2}, $S_w$ has the following spectral decomposition
\begin{equation}\label{eq}
  S_w =  \sum_{\sigma \in \Gamma}\vartheta_w(\sigma)|Z_{\sigma}\rangle\!\langle Z_{\sigma}|,
\end{equation}
where $|Z_{\sigma}\rangle\!\langle Z_{\sigma}|$ is the Dirac operator associated with basis vector $Z_{\sigma}$.
In particular, $S_w$ is self-adjoint, positive and satisfies that
\begin{equation}\label{eq-eigen-vector}
S_w Z_{\sigma}=\vartheta_w(\sigma)\, Z_{\sigma},\quad \forall\, \sigma\in \Gamma,
\end{equation}
which means that each basis vector $Z_{\sigma}$ is an eigenvector of $S_w$.
\end{remark}

The next theorem gives a necessary and sufficient condition for a 2D-weighted number operator $S_w$ to be bounded.

\begin{theorem}\label{thr-4-3}
Let $w\colon \mathbb{N}\times \mathbb{N}\rightarrow \mathbb{R}$ be a nonnegative function satisfying condition~(\ref{eq-4-2}).
Then $S_w$ is bounded if and only if
\begin{equation}\label{eq-4-13}
  \sup_{\sigma\in \Gamma}\vartheta_w(\sigma)<\infty,
\end{equation}
where $\vartheta_w$ is the function on $\Gamma$ defined by (\ref{eq-4-6}). In that case, it holds true that
\begin{equation}\label{eq-4-14}
  \|S_w\|= \sup_{\sigma\in \Gamma}\vartheta_w(\sigma)
\end{equation}
and $\mathrm{Dom}\,S_w= \mathsf{L}\!^2(Z)$.
\end{theorem}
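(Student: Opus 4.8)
The plan is to read everything off the spectral picture of $S_w$ provided by Theorem~\ref{thr-4-2} and Remark~\ref{rem-4-1}, according to which $S_w$ acts diagonally in the canonical ONB $\{Z_{\sigma}\mid \sigma\in \Gamma\}$ with eigenvalue $\vartheta_w(\sigma)$ at $Z_{\sigma}$, and its domain is exactly the set of $\xi$ satisfying the square-summability condition~(\ref{eq-4-5}). The whole statement then reduces to the elementary fact that a diagonal operator is bounded precisely when its eigenvalue family is bounded, with operator norm equal to the supremum of the eigenvalues.

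First I would prove the ``only if'' part together with the inequality $\sup_{\sigma}\vartheta_w(\sigma)\le \|S_w\|$. Suppose $S_w$ is bounded. By Proposition~\ref{prop-4-1} every basis vector $Z_{\sigma}$ lies in $\mathrm{Dom}\, S_w$, and by~(\ref{eq-eigen-vector}) we have $S_wZ_{\sigma}=\vartheta_w(\sigma)Z_{\sigma}$. Since $\|Z_{\sigma}\|=1$ and $\vartheta_w(\sigma)\ge 0$, this gives $\vartheta_w(\sigma)=\|S_wZ_{\sigma}\|\le \|S_w\|$ for every $\sigma\in \Gamma$, hence $\sup_{\sigma\in \Gamma}\vartheta_w(\sigma)\le \|S_w\|<\infty$, which in particular establishes~(\ref{eq-4-13}).

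Next I would prove the ``if'' part, the reverse norm inequality, and the domain assertion simultaneously. Assume $M:=\sup_{\sigma\in \Gamma}\vartheta_w(\sigma)<\infty$. For an arbitrary $\xi\in \mathsf{L}\!^2(Z)$, Parseval's identity gives $\sum_{\sigma\in \Gamma}\big(\vartheta_w(\sigma)\big)^2|\langle Z_{\sigma},\xi\rangle|^2\le M^2\sum_{\sigma\in \Gamma}|\langle Z_{\sigma},\xi\rangle|^2=M^2\|\xi\|^2<\infty$, so condition~(\ref{eq-4-5}) holds and Theorem~\ref{thr-4-2} yields $\xi\in \mathrm{Dom}\, S_w$; since $\xi$ was arbitrary, $\mathrm{Dom}\, S_w=\mathsf{L}\!^2(Z)$. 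Moreover, by the spectral formula~(\ref{eq-4-7}) and the orthonormality of $\{Z_{\sigma}\mid\sigma\in\Gamma\}$, $\|S_w\xi\|^2=\sum_{\sigma\in \Gamma}\big(\vartheta_w(\sigma)\big)^2|\langle Z_{\sigma},\xi\rangle|^2\le M^2\|\xi\|^2$, so $S_w$ is bounded with $\|S_w\|\le M$. Combining this with the inequality from the previous step gives $\|S_w\|=\sup_{\sigma\in \Gamma}\vartheta_w(\sigma)$, which is~(\ref{eq-4-14}).

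Since all the analytic content---the spectral decomposition and the domain characterization---has already been packaged into Theorem~\ref{thr-4-2}, I do not anticipate any genuine obstacle here; the only points requiring a little care are to use the precise square-summable domain condition~(\ref{eq-4-5}) (rather than an $\ell^1$-type summability) when invoking Theorem~\ref{thr-4-2}, and to use $\vartheta_w(\sigma)\ge 0$ so that $\vartheta_w(\sigma)=\|S_wZ_{\sigma}\|$, giving the exact value of $\|S_w\|$ rather than merely a bound.
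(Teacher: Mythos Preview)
Your proof is correct and follows essentially the same approach as the paper's: both directions are obtained from the spectral description in Theorem~\ref{thr-4-2} and Remark~\ref{rem-4-1}, using the eigenvector relation $S_wZ_{\sigma}=\vartheta_w(\sigma)Z_{\sigma}$ for the lower bound on $\|S_w\|$ and the domain/norm characterization~(\ref{eq-4-5})--(\ref{eq-4-7}) together with Parseval for the upper bound and the full-domain assertion.
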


\begin{proof}
Write $b=\sup_{\sigma\in \Gamma}\vartheta_w(\sigma)$. If $b<\infty$, then by using Theorem~\ref{thr-4-2} we find
\begin{equation*}
  \|S_w\xi\|^2=\sum_{\sigma\in \Gamma}\big(\vartheta_w(\sigma)\big)^2 |\langle Z_{\sigma}, \xi\rangle|^2
  \leq b^2 \sum_{\sigma\in \Gamma} |\langle Z_{\sigma}, \xi\rangle|^2
  = b^2 \|\xi\|^2,\quad \xi\in \mathrm{Dom}\, S_w,
\end{equation*}
which implies that $S_w$ is bounded and $\|S_w\|\leq b$. Conversely, if $S_w$ is bounded, then $\|S_w\|<\infty$,
which together with Remark~\ref{rem-4-1} yields that
\begin{equation*}
  \vartheta_w(\sigma)
  =\|\vartheta_w(\sigma)Z_{\sigma}\|
  = \|S_wZ_{\sigma}\|
  \leq \|S_w\|\|Z_{\sigma}\|
  = \|S_w\|
\end{equation*}
holds for all $\sigma\in \Gamma$, which implies that $b\leq \|S_w\|$, hence $b<\infty$.
Now suppose that $b<\infty$. Then, for all $\xi\in \mathsf{L}\!^2(Z)$, we have
\begin{equation*}
  \sum_{\sigma\in \Gamma}\big(\vartheta_w(\sigma)\big)^2 |\langle Z_{\sigma}, \xi\rangle|^2
  \leq b^2\sum_{\sigma\in \Gamma} |\langle Z_{\sigma}, \xi\rangle|^2
  =b^2\|\xi\|^2
  <\infty,
\end{equation*}
which together with Theorem~\ref{thr-4-2} means that $\xi \in \mathrm{Dom}\, S_w$.
Thus $\mathrm{Dom}\,S_w= \mathsf{L}\!^2(Z)$.
Finally, combining $b\leq \|S_w\|$ with $\|S_w\|\leq b$ gives $\|S_w\|=b$.
\end{proof}

\begin{theorem}\label{thr-4-4}
Let $w\colon \mathbb{N}\times \mathbb{N}\rightarrow \mathbb{R}$ be a nonnegative function satisfying condition~(\ref{eq-4-2}),
and $\mathcal{D}=\mathrm{span}\big\{Z_{\sigma}\mid \sigma\in \Gamma\big\}$ be the linear subspace (manifold) spanned by
the canonical ONB.
Then $\mathcal{D}$ is a core for $S_w$, namely
$\mathcal{D}\subset \mathrm{Dom}\, S_w$ and $\overline{S_w|_{\mathcal{D}}} = S_w$.
\end{theorem}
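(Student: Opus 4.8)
The plan is to exploit the spectral picture established in Theorem~\ref{thr-4-2} and Remark~\ref{rem-4-1}, together with the fact that $S_w$, being self-adjoint, is automatically closed. Since $\mathcal{D}\subset \mathrm{Dom}\,S_w$ is already known and $S_w|_{\mathcal{D}}$ is a restriction of the closed operator $S_w$, it is closable and its closure satisfies $\overline{S_w|_{\mathcal{D}}}\subseteq S_w$. Hence the whole task reduces to the reverse inclusion $S_w\subseteq \overline{S_w|_{\mathcal{D}}}$, i.e.\ to showing that every $\xi\in\mathrm{Dom}\,S_w$ can be approximated by a sequence $(\xi_n)\subset\mathcal{D}$ with $\xi_n\to\xi$ and $S_w\xi_n\to S_w\xi$ in $\mathsf{L}\!^2(Z)$.

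To this end I would fix an enumeration $\Gamma=\{\sigma_m\mid m\geq 1\}$ of the countable index set and, for a given $\xi\in\mathrm{Dom}\,S_w$, put $\xi_n=\sum_{m=1}^n\langle Z_{\sigma_m},\xi\rangle Z_{\sigma_m}$. Each $\xi_n$ is a finite linear combination of basis vectors, so $\xi_n\in\mathcal{D}$, and $\xi_n\to\xi$ in $\mathsf{L}\!^2(Z)$ by the Parseval expansion of $\xi$ in the canonical ONB. Using the linearity of $S_w$ on $\mathcal{D}$ and the eigenrelation (\ref{eq-eigen-vector}) of Remark~\ref{rem-4-1}, one obtains $S_w\xi_n=\sum_{m=1}^n\vartheta_w(\sigma_m)\langle Z_{\sigma_m},\xi\rangle Z_{\sigma_m}$.

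Next I would invoke Theorem~\ref{thr-4-2}: because $\xi\in\mathrm{Dom}\,S_w$, the series $\sum_{\sigma\in\Gamma}\big(\vartheta_w(\sigma)\big)^2|\langle Z_{\sigma},\xi\rangle|^2$ converges and $S_w\xi=\sum_{\sigma\in\Gamma}\vartheta_w(\sigma)\langle Z_{\sigma},\xi\rangle Z_{\sigma}$. By orthonormality of $\{Z_\sigma\}$,
\[
\|S_w\xi-S_w\xi_n\|^2=\sum_{m>n}\big(\vartheta_w(\sigma_m)\big)^2|\langle Z_{\sigma_m},\xi\rangle|^2,
\]
which is the tail of a convergent series and therefore tends to $0$ as $n\to\infty$. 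Thus $(\xi_n,S_w\xi_n)\to(\xi,S_w\xi)$ in the graph norm, so $\xi\in\mathrm{Dom}\,\overline{S_w|_{\mathcal{D}}}$ with $\overline{S_w|_{\mathcal{D}}}\xi=S_w\xi$. Combining this with $\overline{S_w|_{\mathcal{D}}}\subseteq S_w$ gives $\overline{S_w|_{\mathcal{D}}}=S_w$, which is the claim.

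The argument is essentially routine once Theorem~\ref{thr-4-2} is in hand; there is no serious obstacle. The one point that requires care is the bookkeeping between double and single series: the approximants $\xi_n$ should be taken as finite sections of the \emph{one-dimensional} ONB expansion of $\xi$, where Theorem~\ref{thr-4-2} applies verbatim, rather than as the square-array partial sums $\sum_{j,k=0}^n w(j,k)\partial_k^*\partial_j\partial_j^*\partial_k\xi$ that enter the definition of natural convergence of $S_w\xi$. Keeping these two notions of partial sum apart is all that is needed.
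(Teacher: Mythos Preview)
Your proof is correct and follows essentially the same route as the paper: approximate an arbitrary $\xi\in\mathrm{Dom}\,S_w$ by finite sections of its canonical ONB expansion and use the spectral representation of Theorem~\ref{thr-4-2} to show convergence in the graph norm. The only cosmetic difference is that the paper chooses the specific exhaustion $\Gamma_{n]}=\{\sigma\in\Gamma\mid\sigma=\emptyset\text{ or }\max\sigma\le n\}$ rather than an arbitrary enumeration of $\Gamma$, but the argument is otherwise identical.
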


\begin{proof}
By Proposition~\ref{prop-4-1}, we find that $\{Z_{\sigma}\mid \sigma\in \Gamma\} \subset \mathrm{Dom}\,S_w$, which implies
that $\mathcal{D}\subset \mathrm{Dom}\,S_w$.
Now let $\xi_0\in \mathrm{Dom}\, S_w$.
Then, by Theroem~\ref{thr-4-2}, $\sum_{\sigma\in \Gamma}\big(\vartheta_w(\sigma)\big)^2 |\langle Z_{\sigma}, \xi_0\rangle |^2<\infty$.
Set
\begin{equation*}
  \xi_n = \sum_{\sigma\in \Gamma_{n]}}\langle Z_{\sigma}, \xi_0\rangle Z_{\sigma},\quad n\geq 1,
\end{equation*}
where $\Gamma_{n]} =  \big\{\sigma\in \Gamma\mid \sigma=\emptyset\ \  \mbox{or}\ \ \max\, \sigma \leq n\big\}$.
Note that the cardinality of $\Gamma_{n]}$ is exactly $2^{n+1}$, which means that
$\xi_n$ is just a linear combination of finitely many basis vectors. Hence $\{\xi_n\mid n\geq 1\}\subset \mathcal{D}$,
in particular $\{\xi_n\mid n\geq 1\}\subset \mathrm{Dom}\, S_w$.
Clearly, $\xi_n\rightarrow \xi_0$ as $n\rightarrow \infty$.
A direct calculation gives
\begin{equation*}
\begin{split}
\|S_w\xi_0 - S_w\xi_n \|^2
  &= \sum_{\sigma\in \Gamma}\big(\vartheta_w(\sigma)\big)^2 |\langle Z_{\sigma}, \xi_0\rangle - \langle Z_{\sigma}, \xi_n\rangle|^2\\
  &= \sum_{\sigma\in \Gamma\setminus \Gamma_{n]}}\big(\vartheta_w(\sigma)\big)^2 |\langle Z_{\sigma}, \xi_0\rangle |^2\\
  &= \sum_{\sigma\in \Gamma}\big(\vartheta_w(\sigma)\big)^2 |\langle Z_{\sigma}, \xi_0\rangle |^2
     - \sum_{\sigma\in \Gamma_{n]}}\big(\vartheta_w(\sigma)\big)^2 |\langle Z_{\sigma}, \xi_0\rangle |^2,
\end{split}
\end{equation*}
which together with $\sum_{\sigma\in \Gamma}\big(\vartheta_w(\sigma)\big)^2 |\langle Z_{\sigma}, \xi_0\rangle |^2<\infty$ implies that
\begin{equation*}
  S_w|_{\mathcal{D}}\xi_n = S_w\xi_n\rightarrow S_w\xi_0.
\end{equation*}
Thus, the graph of $S_w|_{\mathcal{D}}$ is dense in that of $S_w$, namely $\mathcal{D}$ is a core of $S_w$.
\end{proof}

\begin{definition}\label{def-4-2}
Let $u\colon \mathbb{N}\rightarrow \mathbb{R}$ be a bounded nonnegative function.
The 1D-weighted number operator $N_u$ associated with $u$ is the one in $\textsf{L}\!^2(Z)$ given by
\begin{equation}\label{eq-4-15}
  N_u\xi=\sum_{k=0}^{\infty}u(k)\partial_k^*\partial_k\xi,\quad \xi \in \mathrm{Dom}\, N_u
\end{equation}
with
\begin{equation}\label{eq-4-16}
  \mathrm{Dom}\, N_u =\Big\{\, \xi \in \textsf{L}\!^2(Z) \Bigm|   \sum_{k=0}^{\infty}u(k)\partial_k^*\partial_k\xi\ \ \mbox{converges} \,\Big\}.
\end{equation}
\end{definition}

In the same way as in the proof of Theorem~\ref{thr-4-2},  we can prove the following theorem,
which characterizes the 1D-weighted number operators.

\begin{theorem}\label{thr-4-5}
Let $u\colon \mathbb{N}\rightarrow \mathbb{R}$ be a bounded nonnegative function.
Then a vector $\xi \in \mathsf{L}\!^2(Z)$ belongs to $\mathrm{Dom}\, N_u$ if and only if it satisfies that
\begin{equation}\label{eq-4-17}
 \sum_{\sigma \in \Gamma}\big(\#_u(\sigma)\big)^2|\langle Z_{\sigma},\xi\rangle|^2<\infty,
\end{equation}
where $\#_u(\sigma)=\sum_{k=0}^{\infty}\mathbf{1}_{\sigma}(k)u(k)$. In that case, one has
\begin{equation}\label{eq-4-18}
  N_u\xi = \sum_{\sigma \in \Gamma}\#_u(\sigma)\langle Z_{\sigma},\xi\rangle Z_{\sigma}.
\end{equation}
\end{theorem}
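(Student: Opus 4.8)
The plan is to mirror the proof of Theorem~\ref{thr-4-2} almost verbatim, replacing the double-indexed partial sums by single-indexed ones and the function $\vartheta_w$ by $\#_u$. First I would record the analogue of Proposition~\ref{prop-4-1}: for each $\sigma\in\Gamma$ and $n\geq 0$, Lemma~\ref{lem-3-3} gives
\begin{equation*}
\sum_{k=0}^{n}u(k)\partial_k^*\partial_k Z_{\sigma}
= \Big[\sum_{k=0}^{n}\mathbf{1}_{\sigma}(k)u(k)\Big] Z_{\sigma},
\end{equation*}
and since $u$ is bounded, say by $M$, the bracketed scalar is bounded by $M\#(\sigma)<\infty$, so the series $\sum_{k}u(k)\partial_k^*\partial_k Z_{\sigma}$ converges (in fact it is eventually constant once $n\geq\max\sigma$, whenever $\sigma\neq\emptyset$), with limit $\#_u(\sigma)Z_{\sigma}$. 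Hence $\mathcal{D}\subset\mathrm{Dom}\,N_u$ and $N_u Z_{\sigma}=\#_u(\sigma)Z_{\sigma}$.

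Next I would prove the ``if'' direction. Given $\xi$ with $\sum_{\sigma}(\#_u(\sigma))^2|\langle Z_{\sigma},\xi\rangle|^2<\infty$, set $\eta=\sum_{\sigma}\#_u(\sigma)\langle Z_{\sigma},\xi\rangle Z_{\sigma}$ (a well-defined element of $\mathsf{L}^2(Z)$ by the hypothesis) and $\eta_n=\sum_{k=0}^{n}u(k)\partial_k^*\partial_k\xi$. Using boundedness of each $\partial_k^*\partial_k$ together with norm-convergence of the expansion $\xi=\sum_{\sigma}\langle Z_{\sigma},\xi\rangle Z_{\sigma}$, interchange the (finite) sum over $k$ with the sum over $\sigma$ to get $\eta_n=\sum_{\sigma}\#_u^{(n)}(\sigma)\langle Z_{\sigma},\xi\rangle Z_{\sigma}$ where $\#_u^{(n)}(\sigma)=\sum_{k=0}^{n}\mathbf{1}_{\sigma}(k)u(k)$. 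Then
\begin{equation*}
\|\eta-\eta_n\|^2=\sum_{\sigma\in\Gamma}\big|\#_u(\sigma)-\#_u^{(n)}(\sigma)\big|^2|\langle Z_{\sigma},\xi\rangle|^2,
\end{equation*}
and since $0\leq\#_u^{(n)}(\sigma)\leq\#_u(\sigma)$ the summand is dominated by $4(\#_u(\sigma))^2|\langle Z_{\sigma},\xi\rangle|^2$, which is summable, while $\#_u^{(n)}(\sigma)\to\#_u(\sigma)$ pointwise in $\sigma$; the dominated convergence theorem on the counting measure of $\Gamma$ gives $\|\eta-\eta_n\|\to 0$. Thus the series defining $N_u\xi$ converges, $\xi\in\mathrm{Dom}\,N_u$, and $N_u\xi=\eta$, which is (\ref{eq-4-18}).

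For the ``only if'' direction, suppose $\xi\in\mathrm{Dom}\,N_u$, so $(\eta_n)_{n\geq 0}$ converges and hence is norm-bounded, say $\|\eta_n\|^2\leq c$ for all $n$. By the identity $\|\eta_n\|^2=\sum_{\sigma}(\#_u^{(n)}(\sigma))^2|\langle Z_{\sigma},\xi\rangle|^2\leq c$ and Fatou's lemma (again on $\Gamma$ with counting measure), using $\#_u^{(n)}(\sigma)\to\#_u(\sigma)$, we obtain $\sum_{\sigma}(\#_u(\sigma))^2|\langle Z_{\sigma},\xi\rangle|^2\leq c<\infty$, which is (\ref{eq-4-17}). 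I do not anticipate a genuine obstacle here: the only point requiring a little care is the justification of the interchange of summations giving the expansion of $\eta_n$ in the canonical ONB, but this is routine because the inner sum over $k$ is finite and each $\partial_k^*\partial_k$ is bounded. The proof is essentially identical in structure to that of Theorem~\ref{thr-4-2}, with $\#_u$, $\#_u^{(n)}$ playing the roles of $\vartheta_w$, $\vartheta_w^{(n)}$ and single sums replacing double sums, which is exactly what the sentence preceding the statement promises.
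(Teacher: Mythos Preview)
Your proposal is correct and follows exactly the approach the paper indicates: it reproduces the proof of Theorem~\ref{thr-4-2} with single-indexed partial sums and with $\#_u$, $\#_u^{(n)}$ in place of $\vartheta_w$, $\vartheta_w^{(n)}$. The paper itself does not spell out a separate proof, simply stating that Theorem~\ref{thr-4-5} is proved in the same way as Theorem~\ref{thr-4-2}.
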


This theorem shows that a 1D-weighted number operator is also densely-defined, self-adjoint and positive.

\begin{remark}\label{rem-4-2}
A 1D-weighted number operator can be viewed as a 2D-weighted number operator.
In fact, if $N_u$ is the 1D-weighted number operator associated with a bounded nonnegative function
$u\colon \mathbb{N}\rightarrow \mathbb{R}$,
then we have
\begin{equation}\label{eq-4-19}
  N_u = S_{w^{(u)}},
\end{equation}
where $w^{(u)}$ is the function on $\mathbb{N}\times \mathbb{N}$ defined by
\begin{equation}\label{eq-4-20}
  w^{(u)}(j,k)=
  \left\{
    \begin{array}{ll}
      u(j), & \hbox{$j=k$, $(j,k)\in \mathbb{N}\times \mathbb{N}$;} \\
      0, & \hbox{$j\neq k$, $(j,k)\in \mathbb{N}\times \mathbb{N}$.}
    \end{array}
  \right.
\end{equation}
Thus, a 1D-weighted number operator admits all properties that all 2D-weighted number operators admit.
\end{remark}

\begin{example}\label{exam-4-1}
The number operator $N$ in $\mathsf{L}\!^2(Z)$  is exactly the 1D-weighted number operator $N_u$ associated with $u\equiv 1$.
\end{example}

The number operator $N$ plays an important role in many problems in mathematical physics \cite{cw,wang-chen-2016}.
The next result shows a role it plays in analyzing the 2D-weighted number operators.

\begin{theorem}\label{thr-4-6}
Let $w\colon \mathbb{N}\times \mathbb{N}\rightarrow \mathbb{R}$ be a nonnegative function satisfying condition~(\ref{eq-4-2}).
Then $\mathrm{Dom}\, N$ is a core for $S_w$.
In particular, for all bounded nonnegative function $u\colon \mathbb{N}\rightarrow \mathbb{R}$, $\mathrm{Dom}\, N$ is a core for $N_u$.
\end{theorem}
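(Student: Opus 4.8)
The plan is to show that $\mathrm{Dom}\,N$ sits between $\mathcal{D}$ and $\mathrm{Dom}\,S_w$ and to exploit the fact, already established in Theorem~\ref{thr-4-4}, that $\mathcal{D}$ is itself a core for $S_w$. Since a core property is transitive in the following sense — if $\mathcal{E}_1\subset \mathcal{E}_2\subset \mathrm{Dom}\,A$ and $\mathcal{E}_1$ is a core for the closed operator $A$, then so is $\mathcal{E}_2$ — it suffices to verify the two inclusions $\mathcal{D}\subset \mathrm{Dom}\,N\subset \mathrm{Dom}\,S_w$. The first inclusion is immediate and was already noted after Definition~\ref{def-3-2}. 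So the real content is the inclusion $\mathrm{Dom}\,N\subset \mathrm{Dom}\,S_w$, which I would reduce to a pointwise (in $\sigma$) comparison between the eigenvalue functions $\vartheta_w$ and $\#$.

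First I would recall, from Remark~\ref{rem-4-1} and Definition~\ref{def-3-2}, that both operators are diagonal in the canonical ONB: $S_w Z_\sigma = \vartheta_w(\sigma)Z_\sigma$ and $NZ_\sigma = \#(\sigma)Z_\sigma$. By Theorem~\ref{thr-4-2}, membership of $\xi$ in $\mathrm{Dom}\,S_w$ is equivalent to $\sum_{\sigma\in\Gamma}\vartheta_w(\sigma)^2|\langle Z_\sigma,\xi\rangle|^2<\infty$, while $\xi\in\mathrm{Dom}\,N$ means $\sum_{\sigma\in\Gamma}\#(\sigma)^2|\langle Z_\sigma,\xi\rangle|^2<\infty$. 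Hence the inclusion will follow once I produce a finite constant $\alpha$ with
\begin{equation*}
\vartheta_w(\sigma)\leq 2\alpha\,\#(\sigma),\qquad \forall\,\sigma\in\Gamma.
\end{equation*}
But this bound is exactly the estimate carried out inside the proof of Proposition~\ref{prop-4-1}: with $\alpha=\sup_{k\geq 0}\sum_{j=0}^\infty w(j,k)<\infty$ (which is finite by condition~(\ref{eq-4-2})), one has
$\vartheta_w(\sigma)=\sum_{j}\mathbf{1}_\sigma(j)w(j,j)+\sum_{j,k}(1-\mathbf{1}_\sigma(j))\mathbf{1}_\sigma(k)w(j,k)\leq \alpha\,\#(\sigma)+\alpha\,\#(\sigma)=2\alpha\,\#(\sigma)$.
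So I would simply quote that computation. Then for $\xi\in\mathrm{Dom}\,N$,
\begin{equation*}
\sum_{\sigma\in\Gamma}\vartheta_w(\sigma)^2|\langle Z_\sigma,\xi\rangle|^2\leq 4\alpha^2\sum_{\sigma\in\Gamma}\#(\sigma)^2|\langle Z_\sigma,\xi\rangle|^2<\infty,
\end{equation*}
so $\xi\in\mathrm{Dom}\,S_w$ by Theorem~\ref{thr-4-2}, giving $\mathrm{Dom}\,N\subset\mathrm{Dom}\,S_w$.

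To finish, I would invoke the transitivity remark: $\mathcal{D}\subset\mathrm{Dom}\,N\subset\mathrm{Dom}\,S_w$ with $\mathcal{D}$ a core for the (closed, being self-adjoint) operator $S_w$ by Theorem~\ref{thr-4-4}; since the closure of $S_w|_{\mathcal{D}}$ is already all of $S_w$ and $S_w|_{\mathcal{D}}\subset S_w|_{\mathrm{Dom}\,N}\subset S_w$, taking closures gives $\overline{S_w|_{\mathrm{Dom}\,N}}=S_w$, i.e.\ $\mathrm{Dom}\,N$ is a core for $S_w$. The particular case of $N_u$ follows from Remark~\ref{rem-4-2}, since $N_u=S_{w^{(u)}}$ and $w^{(u)}$ obviously satisfies condition~(\ref{eq-4-2}) when $u$ is bounded. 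I do not anticipate a genuine obstacle here; the only mild subtlety is making sure the transitivity-of-cores statement is phrased correctly (it rests on the elementary fact that if $\mathcal{D}\subset\mathcal{E}$ then $\overline{A|_{\mathcal{D}}}\subset\overline{A|_{\mathcal{E}}}\subset A$, so equality on the left forces equality throughout), and spelling out that $S_w$ is closed because it is self-adjoint (Remark~\ref{rem-4-1}).
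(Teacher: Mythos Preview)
Your proposal is correct and follows essentially the same route as the paper: both establish the pointwise bound $\vartheta_w(\sigma)\leq 2\alpha\,\#(\sigma)$ with $\alpha=\sup_{k\geq 0}\sum_{j=0}^\infty w(j,k)$, deduce $\mathrm{Dom}\,N\subset\mathrm{Dom}\,S_w$ via Theorem~\ref{thr-4-2}, and then conclude from $\mathcal{D}\subset\mathrm{Dom}\,N\subset\mathrm{Dom}\,S_w$ together with Theorem~\ref{thr-4-4} that $\mathrm{Dom}\,N$ is a core. Your explicit justification of the transitivity-of-cores step is a welcome addition, but the argument is otherwise the same as the paper's.
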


\begin{proof}
By the assumption, we have $\alpha=\sup_{k\geq 0}\sum_{j=0}^{\infty}w(j,k)<\infty$. For all $\sigma \in \Gamma$, it follows from (\ref{eq-4-6}) that
\begin{equation*}
\begin{split}
\vartheta_w(\sigma)
  &= \sum_{j=0}^{\infty}\mathbf{1}_{\sigma}(j)w(j,j) + \sum_{j,k=0}^{\infty}\big(1-\mathbf{1}_{\sigma}(j)\big)\mathbf{1}_{\sigma}(k)w(j,k)\\
  & \leq \sum_{j=0}^{\infty}\mathbf{1}_{\sigma}(j)w(j,j)   + \sum_{j,k=0}^{\infty}\mathbf{1}_{\sigma}(k)w(j,k)\\
  & = \sum_{j=0}^{\infty}\mathbf{1}_{\sigma}(j)w(j,j)   + \sum_{k=0}^{\infty}\mathbf{1}_{\sigma}(k)\sum_{j=0}^{\infty}w(j,k)\\
  & \leq \alpha\sum_{j=0}^{\infty}\mathbf{1}_{\sigma}(j)   + \alpha\sum_{k=0}^{\infty}\mathbf{1}_{\sigma}(k)\\
  &=2\alpha\#(\sigma).
\end{split}
\end{equation*}
This, together with Definition~\ref{def-3-2} and Theorem~\ref{thr-4-2}, implies that $\mathrm{Dom}\, N\subset \mathrm{Dom}\, S_w$.
On the other hand, by Definition~\ref{def-3-2}, we find that
\begin{equation*}
  \mathcal{D}=\mathrm{span}\,\{Z_{\sigma}\mid \sigma\in \Gamma\}\subset \mathrm{Dom}\, N.
\end{equation*}
Thus, by Theorem~\ref{thr-4-4} and the inclusion relation $\mathrm{Dom}\, N\subset \mathrm{Dom}\, S_w$, we know that
$\mathrm{Dom}\, N$ is a core for $S_w$.
\end{proof}

\subsection{Commutation relations}

This subsection is devoted to establish the commutation relations of the weighted number operators with the annihilation
as well as the creation operators.

\begin{theorem}\label{thr-4-7}
Let $u\colon \mathbb{N}\rightarrow \mathbb{R}$ be a bounded nonnegative function. Then,
for all $n\geq 0$, $\mathrm{Dom}\,N_u$ is an invariant subspace of $\partial_n$, and moreover
it holds on $\mathrm{Dom}\,N_u$ that
\begin{equation}\label{eq-4-21}
  N_u\partial_n = \partial_nN_u-u(n)\partial_n.
\end{equation}
\end{theorem}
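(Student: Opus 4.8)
The plan is to work entirely with the orthonormal expansion of vectors in the canonical ONB and to exploit the characterization of $\mathrm{Dom}\,N_u$ furnished by Theorem~\ref{thr-4-5}. The underlying algebraic reason for the identity is the CAR of Lemma~\ref{lem-3-2}: for $k\neq n$ one has $\partial_k^*\partial_k\partial_n=\partial_n\partial_k^*\partial_k$, while $\partial_n^*\partial_n\partial_n=\partial_n^*(\partial_n\partial_n)=0$ and $\partial_n\partial_n^*\partial_n=(I-\partial_n^*\partial_n)\partial_n=\partial_n$, so that formally $\sum_k u(k)\partial_k^*\partial_k\partial_n=\partial_n\sum_k u(k)\partial_k^*\partial_k-u(n)\partial_n$; the real task is to make this rigorous at the level of domains and convergent series.

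First I would compute the Fourier coefficients of $\partial_n\xi$. For $\sigma\in\Gamma$, using $\partial_n^*Z_\sigma=[1-\mathbf{1}_\sigma(n)]Z_{\sigma\cup n}$ from Lemma~\ref{lem-3-1}, one obtains $\langle Z_\sigma,\partial_n\xi\rangle=\langle\partial_n^*Z_\sigma,\xi\rangle=[1-\mathbf{1}_\sigma(n)]\langle Z_{\sigma\cup n},\xi\rangle$, which is supported on those $\sigma$ with $n\notin\sigma$; under the bijection $\sigma\mapsto\tau=\sigma\cup n$ between $\{\sigma\in\Gamma\mid n\notin\sigma\}$ and $\{\tau\in\Gamma\mid n\in\tau\}$ it equals $\langle Z_\tau,\xi\rangle$. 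Since $u\geq 0$, $\#_u(\sigma)=\#_u(\tau)-u(n)\leq\#_u(\tau)$, hence
\begin{equation*}
\sum_{\sigma\in\Gamma}\big(\#_u(\sigma)\big)^2|\langle Z_\sigma,\partial_n\xi\rangle|^2 = \sum_{\tau\in\Gamma,\,n\in\tau}\big(\#_u(\tau\setminus n)\big)^2|\langle Z_\tau,\xi\rangle|^2 \leq \sum_{\tau\in\Gamma}\big(\#_u(\tau)\big)^2|\langle Z_\tau,\xi\rangle|^2,
\end{equation*}
which is finite for $\xi\in\mathrm{Dom}\,N_u$ by Theorem~\ref{thr-4-5}. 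This gives $\partial_n(\mathrm{Dom}\,N_u)\subset\mathrm{Dom}\,N_u$.

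Next I would verify the operator identity on $\mathrm{Dom}\,N_u$. By formula~(\ref{eq-4-18}) of Theorem~\ref{thr-4-5} applied to $\partial_n\xi$ and the coefficient computation above, $N_u\partial_n\xi=\sum_{\sigma\in\Gamma,\,n\notin\sigma}\#_u(\sigma)\langle Z_{\sigma\cup n},\xi\rangle Z_\sigma$. On the other hand $N_u\xi=\sum_{\tau\in\Gamma}\#_u(\tau)\langle Z_\tau,\xi\rangle Z_\tau$ converges in $\mathsf{L}\!^2(Z)$, and since $\partial_n$ is bounded it may be applied term by term; using $\partial_nZ_\tau=\mathbf{1}_\tau(n)Z_{\tau\setminus n}$ and reindexing by $\sigma=\tau\setminus n$ gives $\partial_nN_u\xi=\sum_{\sigma\in\Gamma,\,n\notin\sigma}\#_u(\sigma\cup n)\langle Z_{\sigma\cup n},\xi\rangle Z_\sigma$. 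Since $n\notin\sigma$ forces $\#_u(\sigma\cup n)=\#_u(\sigma)+u(n)$, subtracting $u(n)\partial_n\xi=\sum_{\sigma\in\Gamma,\,n\notin\sigma}u(n)\langle Z_{\sigma\cup n},\xi\rangle Z_\sigma$ from $\partial_nN_u\xi$ leaves precisely $N_u\partial_n\xi$, which is the asserted relation.

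I do not anticipate a serious obstacle: the argument is bookkeeping organized around the index shift $\sigma\leftrightarrow\sigma\cup n$. The only points needing a little care are the termwise application of $\partial_n$ to the norm-convergent expansion of $N_u\xi$ (legitimate by boundedness of $\partial_n$, Lemma~\ref{lem-3-1}) and the implicit rearrangements of the coefficient series, justified by the absolute convergence established in the first step.
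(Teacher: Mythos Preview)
Your proof is correct, but it takes a different route from the paper's. The paper argues purely at the level of finite partial sums: using the CAR of Lemma~\ref{lem-3-2} it establishes the operator identity
\[
\Big[\sum_{k=0}^m u(k)\partial_k^*\partial_k\Big]\partial_n=\partial_n\sum_{k=0}^m u(k)\partial_k^*\partial_k - u(n)\partial_n\qquad(m\geq n),
\]
applies this to a vector $\xi\in\mathrm{Dom}\,N_u$, and then lets $m\to\infty$, invoking only the boundedness of $\partial_n$ and the very definition of $\mathrm{Dom}\,N_u$ as the set where the operator series converges. By contrast, you bypass the partial sums entirely and work through the spectral description of Theorem~\ref{thr-4-5}: you compute the Fourier coefficients of $\partial_n\xi$, bound $\#_u(\sigma)\leq\#_u(\sigma\cup n)$ to get domain invariance, and then match coefficients on both sides of the identity via the bijection $\sigma\leftrightarrow\sigma\cup n$. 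The paper's argument is shorter and more algebraic, needing no appeal to Theorem~\ref{thr-4-5}; your argument is more explicit and makes the combinatorial reason for the shift by $u(n)$ transparent. Both are clean, and each generalizes readily to the creation operator case of Theorem~\ref{thr-4-8}.
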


\begin{proof}
Let $n\geq 0$. Then, for all $m\geq n$, by using Lemma~\ref{lem-2-2} we have
\begin{equation*}
  \Big[\sum_{k=0}^m u(k)\partial_k^*\partial_k\Big]\partial_n
  = \partial_n\sum_{k=0,k\neq n}^m u(k)\partial_k^*\partial_k
  =  \partial_n\sum_{k=0}^m u(k)\partial_k^*\partial_k - u(n)\partial_n.
\end{equation*}
If $\xi \in \mathrm{Dom}\,N_u$, then
\begin{equation*}
  \sum_{k=0}^m u(k)\partial_k^*\partial_k\big(\partial_n\xi\big) = \partial_n\sum_{k=0}^m u(k)\partial_k^*\partial_k\xi - u(n)\partial_n\xi,\quad m\geq n,
\end{equation*}
which, together with Definition~\ref{def-4-2} as well as the continuity of $\partial_n$, implies that $\partial_n\xi\in \mathrm{Dom}\,N_u$,
and taking the limit ($m\rightarrow \infty$) yields
\begin{equation*}
  N_u\partial_n\xi=N_u\big(\partial_n\xi\big) = \partial_nN_u\xi - u(n)\partial_n\xi.
\end{equation*}
This completes the proof.
\end{proof}

The above theorem establishes the commutation relations of the
1D-weighted number operators with the annihilation operators.
With the same argument, we can prove the next theorem, which establishes the commutation relations of the 1D-weighted number operators
with the creation operators.

\begin{theorem}\label{thr-4-8}
Let $u\colon \mathbb{N}\rightarrow \mathbb{R}$ be a bounded nonnegative function. Then,
for all $n\geq 0$, $\mathrm{Dom}\,N_u$ is an invariant subspace of $\partial_n^*$, and moreover
it holds on $\mathrm{Dom}\,N_u$ that
\begin{equation}\label{eq-4-22}
 N_u\partial_n^*= \partial_n^*N_u+ u(n)\partial_n^*.
\end{equation}
\end{theorem}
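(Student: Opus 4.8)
The plan is to mirror the argument used for Theorem~\ref{thr-4-7}, replacing the annihilator $\partial_n$ by the creator $\partial_n^*$ and tracking the sign change coming from the CAR. First I would fix $n\geq 0$ and work with the truncated operators $\sum_{k=0}^m u(k)\partial_k^*\partial_k$ for $m\geq n$. The key algebraic identity to establish is
\begin{equation*}
  \Big[\sum_{k=0}^m u(k)\partial_k^*\partial_k\Big]\partial_n^*
  = \partial_n^*\sum_{k=0}^m u(k)\partial_k^*\partial_k + u(n)\partial_n^*,\quad m\geq n.
\end{equation*}
To see this, split the sum into the $k=n$ term and the rest. For $k\neq n$, Lemma~\ref{lem-3-2} gives $\partial_k^*\partial_k\partial_n^* = \partial_n^*\partial_k^*\partial_k$ (since $\partial_k^*$ commutes with $\partial_n^*$ when $k\neq n$, and $\partial_k$ commutes with $\partial_n^*$ when $k\neq n$). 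For $k=n$, use $\partial_n^*\partial_n\partial_n^* = \partial_n^*(I-\partial_n^*\partial_n) = \partial_n^*$, because $\partial_n^*\partial_n^* = 0$ and $\partial_n\partial_n^* + \partial_n^*\partial_n = I$. Combining these two facts yields the displayed identity, with the extra $+u(n)\partial_n^*$ replacing the $-u(n)\partial_n$ of the annihilator case; this sign is exactly what distinguishes (\ref{eq-4-22}) from (\ref{eq-4-21}).

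Next I would feed a vector $\xi\in\mathrm{Dom}\,N_u$ into this identity: for every $m\geq n$,
\begin{equation*}
  \sum_{k=0}^m u(k)\partial_k^*\partial_k\big(\partial_n^*\xi\big)
   = \partial_n^*\sum_{k=0}^m u(k)\partial_k^*\partial_k\xi + u(n)\partial_n^*\xi.
\end{equation*}
Since $\xi\in\mathrm{Dom}\,N_u$, the series $\sum_{k=0}^m u(k)\partial_k^*\partial_k\xi$ converges in $\mathsf{L}\!^2(Z)$ as $m\to\infty$, and $\partial_n^*$ is bounded (Lemma~\ref{lem-3-1}), so the right-hand side converges; hence the left-hand side converges, which by Definition~\ref{def-4-2} means $\partial_n^*\xi\in\mathrm{Dom}\,N_u$, establishing the invariance of $\mathrm{Dom}\,N_u$ under $\partial_n^*$. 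Taking $m\to\infty$ in the displayed equation then gives $N_u\partial_n^*\xi = \partial_n^*N_u\xi + u(n)\partial_n^*\xi$, i.e. (\ref{eq-4-22}) holds on $\mathrm{Dom}\,N_u$.

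There is no real obstacle here — the proof is routine given the CAR in Lemma~\ref{lem-3-2} and the boundedness of $\partial_n^*$. The only point requiring a little care is getting the sign right in the $k=n$ term: one must use $\partial_n^*\partial_n^* = 0$ together with the anticommutation relation $\partial_n\partial_n^* + \partial_n^*\partial_n = I$ to rewrite $\partial_n^*\partial_n\partial_n^*$ as $\partial_n^*$, which is where the $+u(n)$ (rather than $-u(n)$) originates. Everything else is a verbatim transcription of the proof of Theorem~\ref{thr-4-7}.
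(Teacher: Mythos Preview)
Your proposal is correct and follows exactly the approach the paper intends: the paper does not give a separate proof of Theorem~\ref{thr-4-8} but simply states that it is proved ``with the same argument'' as Theorem~\ref{thr-4-7}, which is precisely the truncation-plus-CAR computation you have written out, with the sign flip at $k=n$ handled via $\partial_n^*\partial_n\partial_n^* = \partial_n^*$.
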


Recall that the number operator $N$ is exactly the 1D-weighted number operator $N_u$ associated with $u(k)\equiv 1$. Thus,
combining Theorem~\ref{thr-4-7}, Theorem~\ref{thr-4-8} with Example~\ref{exam-4-1}, we come to a useful corollary as follows.

\begin{corollary}\label{corol-4-9}
For all $j$, $k\geq 0$, $\mathrm{Dom}\, N$ is an invariant subspace of $\partial_j^*\partial_k$,
and moreover it holds on $\mathrm{Dom}\, N$ that $N\partial_j^*\partial_k = \partial_j^*\partial_kN$.
\end{corollary}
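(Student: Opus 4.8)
The plan is to derive Corollary~\ref{corol-4-9} by combining the two preceding commutation theorems with the identity $\partial_j^*\partial_k = (\partial_j^*)(\partial_k)$ and the fact, recorded in Example~\ref{exam-4-1}, that the number operator $N$ equals the $1$D-weighted number operator $N_u$ associated with the constant weight $u\equiv 1$. The key point is that with $u(n)\equiv 1$ the correction terms in (\ref{eq-4-21}) and (\ref{eq-4-22}) coincide: $N\partial_n = \partial_n N - \partial_n$ and $N\partial_n^* = \partial_n^* N + \partial_n^*$, so the two ``$\pm$'' shifts cancel when we compose.

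First I would apply Theorem~\ref{thr-4-7} (with $u\equiv 1$, $n=k$) to conclude that $\partial_k(\mathrm{Dom}\,N)\subset \mathrm{Dom}\,N$, and then Theorem~\ref{thr-4-8} (with $u\equiv 1$, $n=j$) to conclude that $\partial_j^*(\mathrm{Dom}\,N)\subset \mathrm{Dom}\,N$. Composing these two inclusions gives $\partial_j^*\partial_k(\mathrm{Dom}\,N)\subset \mathrm{Dom}\,N$, which is the invariance assertion. Next, for $\xi \in \mathrm{Dom}\,N$, I would compute on $\mathrm{Dom}\,N$:
\begin{equation*}
N\partial_j^*\partial_k\xi = \partial_j^*\big(N\partial_k\xi\big) + \partial_j^*\partial_k\xi = \partial_j^*\big(\partial_k N\xi - \partial_k\xi\big) + \partial_j^*\partial_k\xi = \partial_j^*\partial_k N\xi,
\end{equation*}
where the first equality uses (\ref{eq-4-22}) with $u\equiv 1$ applied to the vector $\partial_k\xi\in\mathrm{Dom}\,N$, and the second uses (\ref{eq-4-21}) with $u\equiv 1$ applied to $\xi$. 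The two linear terms $-\partial_j^*\partial_k\xi$ and $+\partial_j^*\partial_k\xi$ cancel, leaving the desired relation $N\partial_j^*\partial_k = \partial_j^*\partial_k N$ on $\mathrm{Dom}\,N$.

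There is essentially no obstacle here: the statement is a direct corollary in the literal sense, and the only things to be careful about are the domain bookkeeping (making sure $\partial_k\xi$ really lies in $\mathrm{Dom}\,N_u=\mathrm{Dom}\,N$ before applying the second commutation relation, which is exactly the invariance part of Theorem~\ref{thr-4-7}) and the correct identification $N=N_u|_{u\equiv 1}$ from Example~\ref{exam-4-1}. The mild ``hard part'', if any, is simply recognizing that the sign discrepancy between Theorem~\ref{thr-4-7} and Theorem~\ref{thr-4-8} is precisely what makes the commutator with $\partial_j^*\partial_k$ vanish; once that is noted, the computation above is a two-line verification.
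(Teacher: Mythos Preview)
Your proof is correct and follows exactly the approach indicated in the paper: the corollary is stated there without a written proof, merely as the immediate consequence of combining Theorem~\ref{thr-4-7}, Theorem~\ref{thr-4-8}, and Example~\ref{exam-4-1}, which is precisely what you have spelled out in detail. Your domain bookkeeping and the cancellation of the $\pm\partial_j^*\partial_k\xi$ terms are both handled correctly.
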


Let $w\colon \mathbb{N}\times \mathbb{N}\rightarrow \mathbb{R}$ be a nonnegative function satisfying condition~(\ref{eq-4-2}).
Then, for any fixed $n\geq 0$, it can be shown that the function $j\mapsto w(j,n)$ is a bounded nonnegative function on $\mathbb{N}$,
thus by Definition~\ref{def-4-2} we have the 1D-weighted number operator $N_{w(\cdot,n)}$ associated with function $w(\cdot,n)$.
Similarly, we have the 1D-weighted number operator $N_{w(n,\cdot)}$ associated with function $w(n,\cdot)$.

\begin{theorem}\label{thr-CR-A}
Let $w\colon \mathbb{N}\times \mathbb{N}\rightarrow \mathbb{R}$ be a nonnegative function satisfying condition~(\ref{eq-4-2}). Then,
for all $n\geq 0$, it holds on $\mathrm{Dom}\, N$ that
\begin{equation}\label{eq-CR-A}
  S_w\partial_n
      = \partial_nS_w
        + \partial_nN_{w(\cdot,n)}
    +\partial_nN_{w(n,\cdot)} -\Big[ 2w(n,n) +\sum_{j=0}^{\infty}w(j,n)\Big]\partial_n,
\end{equation}
where $N_{w(\cdot,n)}$, $N_{w(n,\cdot)}$ are the 1D-weighted number operators associated with functions $w(\cdot,n)$ and $w(n,\cdot)$, respectively.
\end{theorem}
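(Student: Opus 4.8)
The plan is to prove (\ref{eq-CR-A}) first for the truncated operators and then pass to the limit, in the same spirit as the proof of Theorem~\ref{thr-4-7}. For $n\geq 0$ and $m\geq n$ write $S_w^{(m)}$, $N_{w(\cdot,n)}^{(m)}$, $N_{w(n,\cdot)}^{(m)}$ for the square-array partial sums $\sum_{j,k=0}^m w(j,k)\partial_k^*\partial_j\partial_j^*\partial_k$, $\sum_{j=0}^m w(j,n)\partial_j^*\partial_j$ and $\sum_{k=0}^m w(n,k)\partial_k^*\partial_k$ respectively, which are all bounded operators on $\mathsf{L}\!^2(Z)$. The first goal is the bounded identity
\begin{equation*}
 S_w^{(m)}\partial_n = \partial_n S_w^{(m)} + \partial_n N_{w(\cdot,n)}^{(m)} + \partial_n N_{w(n,\cdot)}^{(m)} - \Big[2w(n,n)+\sum_{j=0}^m w(j,n)\Big]\partial_n .
\end{equation*}

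To obtain it, I would compute $S_w^{(m)}\partial_n$ by splitting the double sum according to which of $j,k$ equals $n$, using the CAR of Lemma~\ref{lem-3-2}. The terms with $k=n$ all vanish because $\partial_n\partial_n=0$; in the terms with $j=n$, $k\neq n$ one collapses $\partial_k^*\partial_n\partial_n^*\partial_k\partial_n$ to $\partial_n\partial_k^*\partial_k$ via $\partial_n\partial_n^*\partial_n=\partial_n$; and the terms with $j\neq n$, $k\neq n$ reproduce $\partial_n S_w^{(m)}$ with the rows $j=n$ and $k=n$ deleted. Subtracting $\partial_n S_w^{(m)}$, written out the same way (here the identities already obtained in the proof of Proposition~\ref{prop-4-1}, such as $\partial_k^*\partial_n\partial_n^*\partial_k=(I-\partial_n^*\partial_n)\partial_k^*\partial_k$ for $k\neq n$, are used), leaves exactly the $j=n$ and $k=n$ contributions; the key simplification there is $\partial_n(I-\partial_n^*\partial_n)=\partial_n\partial_n\partial_n^*=0$, which annihilates a whole block, and what remains reassembles into $\partial_n N_{w(\cdot,n)}^{(m)}+\partial_n N_{w(n,\cdot)}^{(m)}$ together with the scalar term, once one checks that the several $w(n,n)$-contributions scattered through the computation combine into the coefficient $2w(n,n)+\sum_{j\leq m}w(j,n)$.

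The limiting step is then routine. Fix $\xi\in\mathrm{Dom}\, N$. By Theorem~\ref{thr-4-6}, $\mathrm{Dom}\, N\subset\mathrm{Dom}\, S_w$; since $j\mapsto w(j,n)$ and $k\mapsto w(n,k)$ are bounded by $\alpha=\sup_{k\geq0}\sum_{j=0}^{\infty}w(j,k)$, Theorem~\ref{thr-4-5} gives $\mathrm{Dom}\, N\subset\mathrm{Dom}\, N_{w(\cdot,n)}\cap\mathrm{Dom}\, N_{w(n,\cdot)}$; and $\mathrm{Dom}\, N$ is invariant under $\partial_n$ by Theorem~\ref{thr-4-7} applied with $u\equiv 1$, so $\partial_n\xi\in\mathrm{Dom}\, N\subset\mathrm{Dom}\, S_w$. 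Hence every term of (\ref{eq-CR-A}) is defined at $\xi$. Applying the bounded identity to $\xi$ and letting $m\to\infty$: the left side tends to $S_w\partial_n\xi$ because the square-array partial sums of $S_w$ evaluated at $\partial_n\xi\in\mathrm{Dom}\, S_w$ converge (Definition~\ref{def-4-1}); on the right, $S_w^{(m)}\xi\to S_w\xi$, $N_{w(\cdot,n)}^{(m)}\xi\to N_{w(\cdot,n)}\xi$, $N_{w(n,\cdot)}^{(m)}\xi\to N_{w(n,\cdot)}\xi$, to each of which the continuous operator $\partial_n$ is applied, and $\sum_{j=0}^m w(j,n)\to\sum_{j=0}^{\infty}w(j,n)\leq\alpha<\infty$. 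This yields (\ref{eq-CR-A}).

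The main obstacle I expect is purely computational: tracking, in the truncated identity, the four index regimes and the repeated $w(n,n)$-terms so that they collapse to exactly the stated coefficient. There is no conceptual difficulty once Theorems~\ref{thr-4-5}, \ref{thr-4-6}, \ref{thr-4-7} and the CAR are in hand. As an alternative that trades this operator bookkeeping for scalar bookkeeping, one may instead verify (\ref{eq-CR-A}) on each basis vector $Z_{\sigma}$ by means of the eigenvalue relations $S_w Z_{\sigma}=\vartheta_w(\sigma)Z_{\sigma}$, $N_u Z_{\sigma}=\#_u(\sigma)Z_{\sigma}$ and $\partial_n Z_{\sigma}=\mathbf{1}_{\sigma}(n)Z_{\sigma\setminus n}$, which reduces the claim to the scalar identity $\vartheta_w(\sigma\setminus n)=\vartheta_w(\sigma)+\#_{w(\cdot,n)}(\sigma)+\#_{w(n,\cdot)}(\sigma)-2w(n,n)-\sum_{j=0}^{\infty}w(j,n)$ for $n\in\sigma$ (both sides being $0$ when $n\notin\sigma$), and then extend from $\mathrm{span}\{Z_{\sigma}\mid\sigma\in\Gamma\}$ to $\mathrm{Dom}\, N$ by the same approximation as above.
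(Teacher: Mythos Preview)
Your proposal is correct and follows essentially the same route as the paper: establish the truncated operator identity
\[
S_w^{(m)}\partial_n=\partial_nS_w^{(m)}+\partial_nN_{w(\cdot,n)}^{(m)}+\partial_nN_{w(n,\cdot)}^{(m)}-\Big[2w(n,n)+\sum_{j=0}^m w(j,n)\Big]\partial_n
\]
by splitting the double sum according to whether $j$ or $k$ equals $n$ and using the CAR of Lemma~\ref{lem-3-2}, then apply it to $\xi\in\mathrm{Dom}\,N$ and let $m\to\infty$ using Theorem~\ref{thr-4-6} and the continuity of $\partial_n$. The only cosmetic difference is that the paper first records the four-piece decomposition of $S_w^{(m)}$ itself before multiplying by $\partial_n$, whereas you multiply first and decompose afterwards; your alternative eigenvector route via the scalar identity for $\vartheta_w(\sigma\setminus n)$ is not used in the paper but is a legitimate shortcut.
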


\begin{proof}
Let $n\geq 0$. Then, for $m\geq n$, we have
\begin{equation}\label{eq}
\begin{split}
  \sum_{j,k=0}^m w(j,k)\partial_k^*\partial_j\partial_j^*\partial_k
  = & \sum_{j=0,j\neq n}^m \sum_{k=0,k\neq n}^m w(j,k)\partial_k^*\partial_j\partial_j^*\partial_k
      + \partial_n^*\partial_n\Big(\sum_{j=0,j\neq n}^mw(j,n)\partial_j\partial_j^*\Big)\\
   & + \partial_n\partial_n^*\Big(\sum_{k=0,k\neq n}^mw(n,k)\partial_k^*\partial_k\Big) + w(n,n) \partial_n^*\partial_n.\\
\end{split}
\end{equation}
Using this formula and Lemma~\ref{lem-3-2}, we can obtain the following relation for $m\geq n$
\begin{equation*}
\begin{split}
  \Big(\sum_{j,k=0}^m & w(j,k)\partial_k^*\partial_j\partial_j^*\partial_k\Big)\partial_n\\
  &=  \Big(\sum_{j=0,j\neq n}^m \sum_{k=0,k\neq n}^m w(j,k)\partial_k^*\partial_j\partial_j^*\partial_k\Big)\partial_n
      + \partial_n^*\partial_n\Big(\sum_{j=0,j\neq n}^mw(j,n)\partial_j\partial_j^*\Big)\partial_n\\
   &\quad + \partial_n\partial_n^*\Big(\sum_{k=0,k\neq n}^mw(n,k)\partial_k^*\partial_k\Big)\partial_n + w(n,n) \partial_n^*\partial_n\partial_n\\
  &= \partial_n \Big(\sum_{j=0,j\neq n}^m \sum_{k=0,k\neq n}^m w(j,k)\partial_k^*\partial_j\partial_j^*\partial_k\Big)
      + \partial_n^*\partial_n\partial_n\Big(\sum_{j=0,j\neq n}^mw(j,n)\partial_j\partial_j^*\Big)\\
   &\quad + \partial_n\partial_n^*\partial_n\Big(\sum_{k=0,k\neq n}^mw(n,k)\partial_k^*\partial_k\Big) + w(n,n) \partial_n^*\partial_n\partial_n\\
  &= \partial_n \Big(\sum_{j=0,j\neq n}^m \sum_{k=0,k\neq n}^m w(j,k)\partial_k^*\partial_j\partial_j^*\partial_k\Big)
   +  \partial_n\Big(\sum_{k=0,k\neq n}^mw(n,k)\partial_k^*\partial_k\Big),
\end{split}
\end{equation*}
which together with
\begin{equation*}
\begin{split}
\partial_n & \Big(\sum_{j=0,j\neq n}^m \sum_{k=0,k\neq n}^m w(j,k)\partial_k^*\partial_j\partial_j^*\partial_k\Big)\\
&=  \partial_n\sum_{j,k=0}^m  w(j,k)\partial_k^*\partial_j\partial_j^*\partial_k
   - w(n,n)\partial_n
   -\Big(\sum_{j=0}^mw(j,n)\Big)\partial_n + \partial_n\sum_{j=0}^mw(j,n)\partial_j^*\partial_j\\
\end{split}
\end{equation*}
and
\begin{equation*}
  \partial_n\Big(\sum_{k=0,k\neq n}^mw(n,k)\partial_k^*\partial_k\Big)
  = \partial_n\sum_{k=0}^mw(n,k)\partial_k^*\partial_k - w(n,n)\partial_n
\end{equation*}
gives
\begin{equation}\label{eq-4-23}
\begin{split}
  \Big(&\sum_{j,k=0}^m  w(j,k)\partial_k^*\partial_j\partial_j^*\partial_k\Big)\partial_n\\
     & = \partial_n\sum_{j,k=0}^m  w(j,k)\partial_k^*\partial_j\partial_j^*\partial_k
       + \partial_n\sum_{j=0}^mw(j,n)\partial_j^*\partial_j
   +\partial_n\sum_{k=0}^mw(n,k)\partial_k^*\partial_k\\
   &\quad - 2\Big[w(n,n) +\sum_{j=0}^mw(j,n)\Big]\partial_n.
\end{split}
\end{equation}
Now let $\xi \in \mathrm{Dom}\, N$. Then, by Theorem~\ref{thr-4-6}, we know that
\begin{equation*}
\xi \in \mathrm{Dom}\, S_w \cap \mathrm{Dom}\, N_{w(\cdot,n)}\cap \mathrm{Dom}\, N_{w(n,\cdot)}.
\end{equation*}
Hence, by (\ref{eq-4-23}), we have
\begin{equation*}
\begin{split}
  \Big(&\sum_{j,k=0}^m  w(j,k)\partial_k^*\partial_j\partial_j^*\partial_k\Big)\partial_n\xi\\
     & = \partial_n\sum_{j,k=0}^m  w(j,k)\partial_k^*\partial_j\partial_j^*\partial_k\xi
       + \partial_n\Big(\sum_{j=0}^mw(j,n)\partial_j^*\partial_j\xi\Big)
   +\partial_n\Big(\sum_{k=0}^mw(n,k)\partial_k^*\partial_k\xi\Big)\\
   &\quad - \Big[2w(n,n) +\sum_{j=0}^mw(j,n)\Big]\partial_n\xi.
\end{split}
\end{equation*}
which, together with Definition~\ref{def-4-1}, Definition~\ref{def-4-2} and the limit procedure ($m\rightarrow \infty$),
yields
\begin{equation*}
 S_w\partial_n\xi
      = \partial_nS_w\xi
        + \partial_nN_{w(\cdot,n)}\xi
    +\partial_nN_{w(n,\cdot)}\xi - \Big[2w(n,n) +\sum_{j=0}^{\infty}w(j,n)\Big]\partial_n\xi.
\end{equation*}
Here we make use of the continuity of $\partial_n$.
\end{proof}

Theorem~\ref{thr-CR-A} establishes the commutation relations of the 2D-weighted number operators with the annihilation operators.
With the same argument, we can also get the commutation relations
of the 2D-weighted number operators with the creation operators as follows.

\begin{theorem}\label{thr-CR-C}
Let $w\colon \mathbb{N}\times \mathbb{N}\rightarrow \mathbb{R}$ be a nonnegative function satisfying condition~(\ref{eq-4-2}). Then,
for all $n\geq 0$, it holds on $\mathrm{Dom}\, N$ that
\begin{equation}\label{eq-CR-C}
  S_w\partial_n^* = \partial_n^*S_w  - \partial_n^* N_{w(\cdot,n)}- \partial_n^*N_{w(n,\cdot)}
   + \Big(\sum_{j=0}^{\infty}w(j,n)\Big)\partial_n^*,
\end{equation}
where $N_{w(\cdot,n)}$, $N_{w(n,\cdot)}$ are the 1D-weighted number operators associated with functions $w(\cdot,n)$ and $w(n,\cdot)$, respectively.
\end{theorem}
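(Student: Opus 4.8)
The plan is to mimic the proof of Theorem~\ref{thr-CR-A}, replacing the annihilation operator $\partial_n$ by the creation operator $\partial_n^*$. The starting point is the very same decomposition of the square-array partial sum that was used there: for $m\geq n$, split $\sum_{j,k=0}^m w(j,k)\partial_k^*\partial_j\partial_j^*\partial_k$ into the piece with $j,k\neq n$, the piece with $k=n$, $j\neq n$ (which equals $\partial_n^*\partial_n$ times $\sum_{j\neq n}w(j,n)\partial_j\partial_j^*$), the piece with $j=n$, $k\neq n$ (which equals $\partial_n\partial_n^*$ times $\sum_{k\neq n}w(n,k)\partial_k^*\partial_k$), and the diagonal term $w(n,n)\partial_n^*\partial_n$.

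First I would multiply this identity on the right by $\partial_n^*$ and simplify the four resulting terms with Lemma~\ref{lem-3-2}. The algebraic facts used are: operators localized at sites $\neq n$ commute with $\partial_n^*$; $\partial_n^*\partial_n\partial_n^* = \partial_n^*$ (from $\partial_n\partial_n^* = I - \partial_n^*\partial_n$ and $\partial_n^*\partial_n^* = 0$); and $\partial_n\partial_n^*\partial_n^* = 0$, $\partial_n^*\partial_n^*\partial_n = 0$. Here the third piece, carrying the factor $\partial_n\partial_n^*(\cdots)\partial_n^*$, drops out completely, while the second piece and the diagonal term survive; this asymmetry, which does not occur in the annihilation case, is exactly what produces the single surviving contribution $\bigl(\sum_j w(j,n)\bigr)\partial_n^*$ and the opposite signs in~\ref{eq-CR-C} compared with~\ref{eq-CR-A}. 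Multiplying the same decomposition on the left by $\partial_n^*$ expresses $\partial_n^*\sum_{j\neq n}\sum_{k\neq n}w(j,k)\partial_k^*\partial_j\partial_j^*\partial_k$ through $\partial_n^*\sum_{j,k=0}^m w(j,k)\partial_k^*\partial_j\partial_j^*\partial_k$ and $\partial_n^*\sum_{k=0}^m w(n,k)\partial_k^*\partial_k$. Substituting this back, rewriting $\partial_j\partial_j^* = I - \partial_j^*\partial_j$, and re-inserting the $j=n$ and $k=n$ terms of the two single sums (whose spurious $w(n,n)$ contributions then cancel), one reaches the finite operator identity
\begin{equation*}
\begin{split}
\Bigl(\sum_{j,k=0}^m w(j,k)\partial_k^*\partial_j\partial_j^*\partial_k\Bigr)\partial_n^*
&= \partial_n^*\sum_{j,k=0}^m w(j,k)\partial_k^*\partial_j\partial_j^*\partial_k
 - \partial_n^*\sum_{k=0}^m w(n,k)\partial_k^*\partial_k\\
&\quad - \partial_n^*\sum_{j=0}^m w(j,n)\partial_j^*\partial_j
 + \Bigl(\sum_{j=0}^m w(j,n)\Bigr)\partial_n^*,
\end{split}
\end{equation*}
valid for all $m\geq n$.

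Finally I would fix $\xi\in\mathrm{Dom}\,N$. By Theorem~\ref{thr-4-6}, $\mathrm{Dom}\,N$ is contained in $\mathrm{Dom}\,S_w\cap\mathrm{Dom}\,N_{w(\cdot,n)}\cap\mathrm{Dom}\,N_{w(n,\cdot)}$, and by Theorem~\ref{thr-4-8} with $u\equiv 1$ (so that $N=N_u$) it is invariant under $\partial_n^*$; hence $\partial_n^*\xi\in\mathrm{Dom}\,N\subset\mathrm{Dom}\,S_w$ and the left-hand side of~\ref{eq-CR-C} is meaningful. Applying the finite identity to $\xi$, letting $m\to\infty$ so that the three partial sums converge to $S_w\xi$, $N_{w(n,\cdot)}\xi$ and $N_{w(\cdot,n)}\xi$ (recall $w(\cdot,n)$ and $w(n,\cdot)$ are bounded, being dominated by $\sup_k\sum_j w(j,k)$), noting $\sum_{j=0}^m w(j,n)\to\sum_{j=0}^\infty w(j,n)<\infty$ by condition~\ref{eq-4-2}, and passing the limit through the bounded operator $\partial_n^*$, I obtain~\ref{eq-CR-C}.

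The argument is conceptually routine; the one delicate point is the bookkeeping in the second step — keeping track of which of the many $\partial_n$, $\partial_n^*$ factors annihilate one another via $\partial_n^*\partial_n^* = 0$ and which combine through the CAR into $\partial_n^*\partial_n\partial_n^* = \partial_n^*$, and correctly restoring the diagonal terms when converting sums over indices $\neq n$ back into full square-array sums, so that the extraneous $w(n,n)$ terms cancel and the coefficient of $\partial_n^*$ comes out to precisely $\sum_{j=0}^\infty w(j,n)$.
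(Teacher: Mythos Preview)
Your proposal is correct and follows exactly the approach the paper intends: the paper itself proves Theorem~\ref{thr-CR-C} only by the sentence ``With the same argument'' referring back to Theorem~\ref{thr-CR-A}, and your write-up supplies precisely that mirror computation with $\partial_n^*$ in place of $\partial_n$. Your explicit verification that $\partial_n^*\xi\in\mathrm{Dom}\,N\subset\mathrm{Dom}\,S_w$ via Theorem~\ref{thr-4-8} (so that the left-hand side is well defined before taking the limit) is in fact a slight improvement over the paper's treatment of the annihilation case, where this point is left implicit.
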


As is seen, the 2D-weighted number operators do not commutate with the annihilation and creation operators
in general. However, the case becomes much better when they meet with the product operator $\partial_n^*\partial_n$.

\begin{theorem}\label{thr-4-12}
Let $w\colon \mathbb{N}\times \mathbb{N}\rightarrow \mathbb{R}$ be a nonnegative function satisfying condition~(\ref{eq-4-2}). Then,
for all $n\geq 0$, $\mathrm{Dom}\,S_w$ is an invariant subspace of $\partial_n^*\partial_n$, and moreover
it holds on $\mathrm{Dom}\,S_w$ that
\begin{equation}\label{eq-4-27}
  S_w\partial_n^*\partial_n = \partial_n^*\partial_nS_w.
\end{equation}
\end{theorem}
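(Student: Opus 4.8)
The plan is to exploit the fact that, with respect to the canonical ONB $\{Z_{\sigma}\mid \sigma\in \Gamma\}$, both $S_w$ and $\partial_n^*\partial_n$ act diagonally, so that their commutation reduces to the trivial commutativity of two families of eigenvalues. Concretely, by Remark~\ref{rem-4-1} (which rests on Theorem~\ref{thr-4-2}) the operator $S_w$ has the spectral decomposition $S_w=\sum_{\sigma\in\Gamma}\vartheta_w(\sigma)|Z_{\sigma}\rangle\!\langle Z_{\sigma}|$ with $S_wZ_{\sigma}=\vartheta_w(\sigma)Z_{\sigma}$, while by Lemma~\ref{lem-3-3} the operator $\partial_n^*\partial_n$ is the bounded, self-adjoint projection with $\partial_n^*\partial_n Z_{\sigma}=\mathbf{1}_{\sigma}(n)Z_{\sigma}$.

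First I would establish the invariance. Fix $n\geq 0$ and $\xi\in\mathrm{Dom}\,S_w$. Since $\partial_n^*\partial_n$ is bounded and self-adjoint, Lemma~\ref{lem-3-3} gives, for every $\sigma\in\Gamma$,
\[
\langle Z_{\sigma},\partial_n^*\partial_n\xi\rangle=\langle \partial_n^*\partial_n Z_{\sigma},\xi\rangle=\mathbf{1}_{\sigma}(n)\langle Z_{\sigma},\xi\rangle .
\]
Because $\xi\in\mathrm{Dom}\,S_w$, Theorem~\ref{thr-4-2} yields $\sum_{\sigma\in\Gamma}(\vartheta_w(\sigma))^2|\langle Z_{\sigma},\xi\rangle|^2<\infty$; hence, using $\mathbf{1}_{\sigma}(n)\leq 1$,
\[
\sum_{\sigma\in\Gamma}(\vartheta_w(\sigma))^2|\langle Z_{\sigma},\partial_n^*\partial_n\xi\rangle|^2
=\sum_{\sigma\in\Gamma}(\vartheta_w(\sigma))^2\mathbf{1}_{\sigma}(n)|\langle Z_{\sigma},\xi\rangle|^2
\leq\sum_{\sigma\in\Gamma}(\vartheta_w(\sigma))^2|\langle Z_{\sigma},\xi\rangle|^2<\infty .
\]
By Theorem~\ref{thr-4-2} again this means $\partial_n^*\partial_n\xi\in\mathrm{Dom}\,S_w$, so $\mathrm{Dom}\,S_w$ is invariant under $\partial_n^*\partial_n$.

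Then I would verify identity~(\ref{eq-4-27}) by comparing spectral expansions on an arbitrary $\xi\in\mathrm{Dom}\,S_w$. On the one hand, applying (\ref{eq-4-7}) to $\partial_n^*\partial_n\xi$ together with the formula for $\langle Z_{\sigma},\partial_n^*\partial_n\xi\rangle$ above,
\[
S_w\partial_n^*\partial_n\xi=\sum_{\sigma\in\Gamma}\vartheta_w(\sigma)\langle Z_{\sigma},\partial_n^*\partial_n\xi\rangle Z_{\sigma}
=\sum_{\sigma\in\Gamma}\vartheta_w(\sigma)\mathbf{1}_{\sigma}(n)\langle Z_{\sigma},\xi\rangle Z_{\sigma}.
\]
On the other hand, applying the bounded operator $\partial_n^*\partial_n$ term by term to the norm-convergent expansion $S_w\xi=\sum_{\sigma\in\Gamma}\vartheta_w(\sigma)\langle Z_{\sigma},\xi\rangle Z_{\sigma}$ and invoking Lemma~\ref{lem-3-3},
\[
\partial_n^*\partial_n S_w\xi=\sum_{\sigma\in\Gamma}\vartheta_w(\sigma)\langle Z_{\sigma},\xi\rangle\,\partial_n^*\partial_n Z_{\sigma}
=\sum_{\sigma\in\Gamma}\vartheta_w(\sigma)\mathbf{1}_{\sigma}(n)\langle Z_{\sigma},\xi\rangle Z_{\sigma}.
\]
The two right-hand sides coincide, which is exactly (\ref{eq-4-27}).

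There is no real obstacle here; the only points needing a little care are the legitimacy of passing the bounded, self-adjoint operator $\partial_n^*\partial_n$ through the inner products and through the norm-convergent series defining $S_w\xi$, and the use of Theorem~\ref{thr-4-2} in both directions to describe membership in $\mathrm{Dom}\,S_w$ purely through the coefficient condition. An alternative, more computational route — parallel to the proofs of Theorems~\ref{thr-CR-A} and~\ref{thr-CR-C} — would observe that each truncated sum $\sum_{j,k=0}^m w(j,k)\partial_k^*\partial_j\partial_j^*\partial_k$ commutes with $\partial_n^*\partial_n$, since every $\partial_k^*\partial_j\partial_j^*\partial_k$ is diagonal in $\{Z_{\sigma}\}$, and then pass to the limit as $m\to\infty$; but the spectral argument above is the shortest.
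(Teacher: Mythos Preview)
Your proof is correct and follows essentially the same approach as the paper's own proof: both use Lemma~\ref{lem-3-3} and the self-adjointness of $\partial_n^*\partial_n$ to compute $\langle Z_{\sigma},\partial_n^*\partial_n\xi\rangle=\mathbf{1}_{\sigma}(n)\langle Z_{\sigma},\xi\rangle$, then invoke Theorem~\ref{thr-4-2} in both directions to establish invariance of $\mathrm{Dom}\,S_w$ and to expand $S_w\partial_n^*\partial_n\xi$, and finally pass the bounded operator $\partial_n^*\partial_n$ through the norm-convergent expansion of $S_w\xi$ to obtain the matching expression. The structure, the cited results, and the key computations are the same.
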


\begin{proof}
We use Theorem~\ref{thr-4-2}. Let $n\geq 0$ and $\xi\in \mathrm{Dom}\,S_w$.
Then, by Lemma~\ref{lem-3-3} we have
\begin{equation*}
  \sum_{\sigma \in \Gamma}\big(\vartheta_w(\sigma)\big)^2|\langle Z_{\sigma},\partial_n^*\partial_n\xi\rangle|^2
   = \sum_{\sigma \in \Gamma}\big(\vartheta_w(\sigma)\big)^2\mathbf{1}_{\sigma}(n)|\langle Z_{\sigma},\xi\rangle|^2
   \leq \sum_{\sigma \in \Gamma}\big(\vartheta_w(\sigma)\big)^2|\langle Z_{\sigma},\xi\rangle|^2,
\end{equation*}
which, together with Theorem~\ref{thr-4-2} as well as the self-adjointness of operator $\partial_n^*\partial_n$,
implies that $\partial_n^*\partial_n\xi \in \mathrm{Dom}\,S_w$ and
\begin{equation*}
  S_w\partial_n^*\partial_n\xi
  = \sum_{\sigma\in \Gamma}\vartheta_w(\sigma)\langle Z_{\sigma}, \partial_n^*\partial_n\xi\rangle Z_{\sigma}
   = \sum_{\sigma\in \Gamma}\vartheta_w(\sigma)\mathbf{1}_{\sigma}(n)\langle Z_{\sigma}, \xi\rangle Z_{\sigma}.
\end{equation*}
On the other hand, it follows from the continuity of operator $\partial_n^*\partial_n$ that
\begin{equation*}
  \partial_n^*\partial_nS_w\xi
   = \sum_{\sigma\in \Gamma}\vartheta_w(\sigma)\langle Z_{\sigma}, \xi\rangle \partial_n^*\partial_nZ_{\sigma}
   = \sum_{\sigma\in \Gamma}\vartheta_w(\sigma)\mathbf{1}_{\sigma}(n)\langle Z_{\sigma}, \xi\rangle Z_{\sigma}.
\end{equation*}
Thus $ S_w\partial_n^*\partial_n\xi = \partial_n^*\partial_nS_w\xi$.
\end{proof}

\section{Quantum exclusion semigroup}\label{sec-5}

In the final section, we address the problem to construct a quantum Markov semigroup (QMS, for short) with a formal generator of form (\ref{eq-1-2}).
From a physical point of view, such a QMS belongs to the category of quantum exclusion semigroups, and might serve as a model describing
an open quantum system consisting of an arbitrary number of identical Fermi particles,
where operator $\partial_j^*\partial_k$ represents the jump of particles from site $k$ to site $j$ with
the jump rate $\sqrt{w(j,k)}$.

In what follows, we denote by $\mathcal{D}$ the linear subspace (manifold) spanned by the ONB of $\mathsf{L}\!^2(Z)$, namely
$\mathcal{D} = \mathrm{span}\, \big\{Z_{\sigma} \mid \sigma\in \Gamma\big\}$.
For $n\geq 0$, we set
\begin{equation}\label{eq-5-1}
  \Gamma_{n]} = \big\{ \sigma\in \Gamma \mid \sigma =\emptyset\ \ \mbox{or}\ \max \sigma\leq n\big\},
\end{equation}
where $\max \sigma$ means the biggest of  integers contained in $\sigma$ for nonempty $\sigma\in \Gamma$.

\begin{definition}\label{def-5-1}
A nonnegative function $w\colon \mathbb{N}\times \mathbb{N}\rightarrow \mathbb{R}$ is called a transition kernel if it
satisfies that
\begin{equation}\label{eq-5-2}
\sup_{k\geq 0}\sum_{j=0}^{\infty}w(j,k)<\infty.
\end{equation}
A transition kernel $w$ is said to be regular if it holds that $\inf_{j\geq 0}w(j,j)>0$.
\end{definition}

We mention that there do exist regular transition kernels on $\mathbb{N}\times \mathbb{N}$. In fact, we have a regular transition kernel $w_0$
given by
\begin{equation*}
  w_0(j,k)=
  \left\{
    \begin{array}{ll}
      1, & \hbox{$j= k$, $(j,k) \in \mathbb{N}\times \mathbb{N}$;} \\
      0, & \hbox{$j\neq k$, $(j,k) \in \mathbb{N}\times \mathbb{N}$.}
    \end{array}
  \right.
\end{equation*}
This transition kernel is called the canonical transition kernel on $\mathbb{N}\times \mathbb{N}$.

According to Section~\ref{sec-4}, for a given transition kernel $w$,
the 2D-weighted number operator $S_w$ associated with $w$ is a densely defined, positive and self-adjoint operator in $\mathsf{L}\!^2(Z)$, and moreover
it admits the inclusion relation $\mathrm{Dom}\,N\subset \mathrm{Dom}\,S_w$.

\begin{definition}\label{def-5-2}
Let  $w$ be a transition kernel.
An operator $G$ densely defined in $\mathsf{L}\!^2(Z)$ is said to be $S_w$-admissible if both $G$ and its adjoint $G^*$ have
the same domain as $S_w$ and further satisfy that
\begin{equation}\label{eq-5-3}
 G+G^* =- S_w
\end{equation}
on $\mathrm{Dom}\,S_w$.
\end{definition}

\begin{theorem}\label{thr-5-1}
Let $w$ be a transition kernel and
$G$ be an operator densely defined in $\mathsf{L}\!^2(Z)$. Suppose that
$G$ satisfies the following two requirements:\ \
\begin{enumerate}
  \item[(1)] $G$ is the infinitesimal generator of a strongly continuous contraction semigroup
$P=(P_t)_{t \geq 0}$  on $\mathsf{L}^2(Z)$;
  \item[(2)] and $G$ is $S_w$-admissible.
\end{enumerate}
Then there exists a minimal QDS $\mathcal{T} =(\mathcal{T}_t)_{t\geq 0}$ on $\mathfrak{B}(\mathsf{L}^2(Z))$
satisfying that
\begin{equation}\label{eq-5-4}
\begin{split}
 &\big\langle \xi,\mathcal{T}_t(X) \eta\big\rangle\\
 & =\big\langle \xi,X \eta\big\rangle
  +\int_0^t\Big[\big\langle \xi,\mathcal{T}_s(X) G\eta\big\rangle + \big\langle G\xi, \mathcal{T}_s(X)\eta\big\rangle
  + \sum_{j,k=0}^{\infty}w(j,k)\big\langle \partial_j^*\partial_k \xi, \mathcal{T}_s(X) \partial_j^*\partial_k \eta\big\rangle\Big]ds
 \end{split}
\end{equation}
for all $\xi$, $\eta \in \mathrm{Dom}\, G$ and all $X\in \mathfrak{B}(\mathsf{L}^2(Z))$.
\end{theorem}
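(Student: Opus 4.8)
\ \ The plan is to obtain Theorem~\ref{thr-5-1} as a direct application of Lemma~\ref{lem-2-1}, taking as jump operators the family $\{\sqrt{w(j,k)}\,\partial_j^*\partial_k \mid j,k\ge 0\}$ suitably re-indexed as a sequence. Concretely, I would fix a bijection $(j,k)\mapsto m(j,k)$ from $\mathbb{N}\times\mathbb{N}$ onto $\{1,2,3,\dots\}$ and set $L_{m(j,k)}=\sqrt{w(j,k)}\,\partial_j^*\partial_k$. By Lemma~\ref{lem-3-1} each $L_m$ is a product of bounded operators, hence bounded, so $\mathrm{Dom}\,L_m=\mathsf{L}^2(Z)\supset\mathrm{Dom}\,G$ and the domain hypothesis of Lemma~\ref{lem-2-1} holds trivially; hypothesis~(1) of the theorem is verbatim the hypothesis that $G$ generate a strongly continuous contraction semigroup. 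Thus everything reduces to verifying the structure identity~(\ref{eq-2-3}), i.e.
\[
\langle\xi,G\eta\rangle+\langle G\xi,\eta\rangle+\sum_{j,k=0}^{\infty}w(j,k)\langle\partial_j^*\partial_k\xi,\partial_j^*\partial_k\eta\rangle=0,\qquad \xi,\eta\in\mathrm{Dom}\,G,
\]
after which Lemma~\ref{lem-2-1} produces a minimal QDS satisfying~(\ref{eq-2-4}); rewriting~(\ref{eq-2-4}) with the chosen $(L_m)$ is precisely~(\ref{eq-5-4}).

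To check the identity I would split it into the $G$-part and the jump-part. For the first, $S_w$-admissibility of $G$ gives $\mathrm{Dom}\,G=\mathrm{Dom}\,G^*=\mathrm{Dom}\,S_w$, so for $\xi,\eta$ in this common domain one has $\langle G\xi,\eta\rangle=\langle\xi,G^*\eta\rangle$ and hence, by~(\ref{eq-5-3}), $\langle\xi,G\eta\rangle+\langle G\xi,\eta\rangle=\langle\xi,(G+G^*)\eta\rangle=-\langle\xi,S_w\eta\rangle$. For the jump-part, one first notes $\partial_k^*\partial_j\partial_j^*\partial_k=(\partial_j^*\partial_k)^*(\partial_j^*\partial_k)$ since $\partial_j$ is bounded, so that $\langle\xi,w(j,k)\partial_k^*\partial_j\partial_j^*\partial_k\eta\rangle=w(j,k)\langle\partial_j^*\partial_k\xi,\partial_j^*\partial_k\eta\rangle$ for every pair $(j,k)$. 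By Definition~\ref{def-4-1}, $S_w\eta$ is the natural (square-array) limit of $\sum_{j,k\le n}w(j,k)\partial_k^*\partial_j\partial_j^*\partial_k\eta$, so continuity of the inner product shows the square-array partial sums of $\sum_{j,k}w(j,k)\langle\partial_j^*\partial_k\xi,\partial_j^*\partial_k\eta\rangle$ converge to $\langle\xi,S_w\eta\rangle$. To upgrade this to enumeration-independent summability I would take $\xi=\eta$ and use nonnegativity of the terms to get $\sum_{j,k}w(j,k)\|\partial_j^*\partial_k\eta\|^2=\langle\eta,S_w\eta\rangle<\infty$ (finite since $\eta\in\mathrm{Dom}\,S_w$), whence by Cauchy--Schwarz $\sum_{j,k}w(j,k)|\langle\partial_j^*\partial_k\xi,\partial_j^*\partial_k\eta\rangle|\le\bigl(\sum_{j,k}w(j,k)\|\partial_j^*\partial_k\xi\|^2\bigr)^{1/2}\bigl(\sum_{j,k}w(j,k)\|\partial_j^*\partial_k\eta\|^2\bigr)^{1/2}<\infty$. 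Absolute convergence forces the double series to be independent of the enumeration and equal to its square-array limit, so $\sum_{j,k}w(j,k)\langle\partial_j^*\partial_k\xi,\partial_j^*\partial_k\eta\rangle=\langle\xi,S_w\eta\rangle$. Adding the two parts yields~(\ref{eq-2-3}).

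The step I expect to be the main obstacle is precisely this reconciliation of the two notions of convergence: Definition~\ref{def-4-1} defines $S_w$ through square-array (``natural'') convergence of a double series, whereas Lemma~\ref{lem-2-1} feeds on a single sum $\sum_k\langle L_k u,L_k v\rangle$ whose value could a priori depend on the chosen enumeration of $\mathbb{N}\times\mathbb{N}$. The resolution is that pairing with $\mathcal{T}_s(X)$ (or, in~(\ref{eq-2-3}), with the identity) produces an absolutely convergent series exactly because $\xi,\eta\in\mathrm{Dom}\,S_w$ --- which here is guaranteed by the $S_w$-admissibility clause $\mathrm{Dom}\,G=\mathrm{Dom}\,S_w$ --- and absolute convergence makes all enumerations agree. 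Once~(\ref{eq-2-3}) is in hand the conclusion is immediate: Lemma~\ref{lem-2-1} supplies the minimal QDS $\mathcal{T}=(\mathcal{T}_t)_{t\ge0}$ on $\mathfrak{B}(\mathsf{L}^2(Z))$ obeying~(\ref{eq-2-4}), and substituting $L_{m(j,k)}=\sqrt{w(j,k)}\,\partial_j^*\partial_k$ turns~(\ref{eq-2-4}) term by term into~(\ref{eq-5-4}), the series in~(\ref{eq-5-4}) converging absolutely by the same Cauchy--Schwarz estimate, now using boundedness of $\mathcal{T}_s(X)$. This would complete the argument.
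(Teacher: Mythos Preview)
Your proposal is correct and follows essentially the same route as the paper: define $L_{jk}=\sqrt{w(j,k)}\,\partial_j^*\partial_k$, use $S_w$-admissibility together with $\sum_{j,k}L_{jk}^*L_{jk}\eta=S_w\eta$ to verify the structure identity~(\ref{eq-2-3}), and invoke Lemma~\ref{lem-2-1}. Your treatment is in fact more careful than the paper's, which applies Lemma~\ref{lem-2-1} directly to the doubly-indexed family without discussing the enumeration; your absolute-convergence argument via Cauchy--Schwarz cleanly closes that gap.
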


\begin{proof}
Set $L_{jk}=\sqrt{w(j,k)}\,\partial_j^*\partial_k$ for $j$, $k\geq 0$. Then,
for all $j$, $k\geq 0$, the domain of $L_{jk}$ clearly contains the domain of $G$ since $\partial_j^*\partial_k$ is a bounded operator on
whole $\mathsf{L}\!^2(Z)$.
For all $\xi$, $\eta\in \mathrm{Dom}\,G$, it follows from the assumption that $\xi$, $\eta\in \mathrm{Dom}\,S_w$,
hence by Theorem~\ref{thr-4-2}
\begin{equation*}
  \sum_{j,k=0}^{\infty}L_{jk}^*L_{jk}\eta
  =\sum_{j,k=0}^{\infty}w(j,k)\partial_k\partial_j\partial_j^*\partial_k \eta,
  = S_w\eta
\end{equation*}
which, together with the assumption of $G$ being $S_w$-admissible, implies that
\begin{equation*}
  \langle \xi, G\eta\rangle + \langle G\xi,\eta\rangle + \sum_{j,k=0}^{\infty}\langle L_{jk}\xi, L_{jk} \eta\rangle
  =\langle \xi, (G+G^*)\eta\rangle + \langle \xi, S_w \eta\rangle
  =0.
\end{equation*}
This shows that operators $G$ and $(L_{jk})_{j,k\geq 0}$ satisfy all the conditions in Lemma~\ref{lem-2-1},
thus there exists a minimal QDS $\mathcal{T} =(\mathcal{T}_t)_{t\geq 0}$ on $\mathfrak{B}(\mathsf{L}^2(Z))$
satisfying that
\begin{equation*}
\begin{split}
 &\big\langle \xi,\mathcal{T}_t(X) \eta\big\rangle\\
 & =\big\langle \xi,X \eta\big\rangle
  +\int_0^t\Big[\big\langle \xi,\mathcal{T}_s(X) G\eta\big\rangle + \big\langle G\xi, \mathcal{T}_s(X)\eta\big\rangle
  + \sum_{j,k=0}^{\infty}\big\langle L_{jk} \xi, \mathcal{T}_s(X) L_{jk}\eta\big\rangle\Big]ds
 \end{split}
\end{equation*}
for all $\xi$, $\eta \in \mathrm{Dom}\, G$ and all $X\in \mathfrak{B}(\mathsf{L}^2(Z))$, which is exactly the same as (\ref{eq-5-4}).
\end{proof}

\begin{remark}\label{rem-5-1}
We call the QDS $\mathcal{T} =(\mathcal{T}_t)_{t\geq 0}$ mentioned in Theorem~\ref{thr-5-1} the
minimal QDS constructed from $w$, $G$ and $\big(\partial_j^*\partial_k\big)_{j,k\geq 0}$.
By letting $H=\mathrm{i}\big(G+\frac{1}{2}S_w\big)$, one immediately gets the formal generator of $\mathcal{T} =(\mathcal{T}_t)_{t\geq 0}$,
which is exactly the same as (\ref{eq-1-2}).
\end{remark}

Next, we consider the conservativity of the minimal QDS constructed from $w$, $G$ and $\big(\partial_j^*\partial_k\big)_{j,k\geq 0}$.
To do so, we first make some necessary preparations.

\begin{definition}\label{def-5-3}
An operator $A$ in $\mathsf{L}\!^2(Z)$ with $\mathcal{D}\subset \mathrm{Dom}\, A$
is said to be amenable if for all $\xi\in \mathrm{Dom}\, A$ the sequence $\big(A\xi_n\big)_{n\geq 1}$ converges in  $\mathsf{L}\!^2(Z)$,
where
\begin{equation}\label{eq-5-5}
  \xi_n = \sum_{\sigma\in \Gamma_{n]}}\langle Z_{\sigma}, \xi\rangle  Z_{\sigma}
\end{equation}
with $\Gamma_{n]}$ being the one defined by (\ref{eq-5-1}).
\end{definition}

\begin{proposition}\label{prop-5-2}
Let  $w$ be a transition kernel. Then $S_w$ is amenable. In particular,
the number operator $N$ is amenable.
\end{proposition}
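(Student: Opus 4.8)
The plan is to read off everything from the spectral decomposition of $S_w$ supplied by Theorem~\ref{thr-4-2}. Fix a transition kernel $w$ and take an arbitrary $\xi\in\mathrm{Dom}\,S_w$. By Theorem~\ref{thr-4-2} we have $\sum_{\sigma\in\Gamma}\big(\vartheta_w(\sigma)\big)^2|\langle Z_{\sigma},\xi\rangle|^2<\infty$ and $S_w\xi=\sum_{\sigma\in\Gamma}\vartheta_w(\sigma)\langle Z_{\sigma},\xi\rangle Z_{\sigma}$. The approximants $\xi_n=\sum_{\sigma\in\Gamma_{n]}}\langle Z_{\sigma},\xi\rangle Z_{\sigma}$ from Definition~\ref{def-5-3} are finite linear combinations of basis vectors, so $\xi_n\in\mathcal{D}\subset\mathrm{Dom}\,S_w$, and by linearity of $S_w$ on $\mathcal{D}$ together with the eigenvector relation~(\ref{eq-eigen-vector}) one gets termwise $S_w\xi_n=\sum_{\sigma\in\Gamma_{n]}}\vartheta_w(\sigma)\langle Z_{\sigma},\xi\rangle Z_{\sigma}$.

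Next I would compare $S_w\xi_n$ with $S_w\xi$. Since $\{Z_{\sigma}\mid\sigma\in\Gamma\}$ is orthonormal and $\Gamma_{n]}\subset\Gamma$, the Pythagorean identity gives
\begin{equation*}
\|S_w\xi-S_w\xi_n\|^2=\sum_{\sigma\in\Gamma\setminus\Gamma_{n]}}\big(\vartheta_w(\sigma)\big)^2|\langle Z_{\sigma},\xi\rangle|^2 .
\end{equation*}
Because $\bigcup_{n\geq1}\Gamma_{n]}=\Gamma$ and the full sum $\sum_{\sigma\in\Gamma}\big(\vartheta_w(\sigma)\big)^2|\langle Z_{\sigma},\xi\rangle|^2$ is finite, the right-hand side is the tail of a convergent series of nonnegative terms, hence tends to $0$ as $n\to\infty$. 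Therefore $S_w\xi_n\to S_w\xi$ in $\mathsf{L}\!^2(Z)$; in particular $(S_w\xi_n)_{n\geq1}$ converges, which is exactly the requirement in Definition~\ref{def-5-3}. Since $\xi\in\mathrm{Dom}\,S_w$ was arbitrary, $S_w$ is amenable.

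For the last assertion, I would simply invoke Example~\ref{exam-4-1} and Remark~\ref{rem-4-2}: the number operator $N$ equals $S_{w^{(u)}}$ for $u\equiv1$, and the function $w^{(u)}$ of~(\ref{eq-4-20}) is then the diagonal indicator, which obviously satisfies $\sup_{k\geq0}\sum_{j=0}^{\infty}w^{(u)}(j,k)=1<\infty$ and so is a transition kernel; hence the first part applies to it. (Alternatively, repeat the argument above verbatim with $\vartheta_w(\sigma)$ replaced by $\#(\sigma)$, $\mathrm{Dom}\,S_w$ by $\mathrm{Dom}\,N$, and Theorem~\ref{thr-4-2} by Definition~\ref{def-3-2}.) There is no genuine obstacle here; the only points that need a moment's care are the observation that each $\xi_n$ lies in $\mathcal{D}$, so that $S_w\xi_n$ may be computed eigenvector-by-eigenvector via~(\ref{eq-eigen-vector}), and the fact that the sets $\Gamma_{n]}$ exhaust $\Gamma$, so that the discrepancy $\|S_w\xi-S_w\xi_n\|^2$ is a bona fide series tail of the finite quantity furnished by Theorem~\ref{thr-4-2}.
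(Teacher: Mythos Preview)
Your proof is correct and follows essentially the same route as the paper's: both use Theorem~\ref{thr-4-2} to write $S_w\xi$ and $S_w\xi_n$ in the canonical ONB, compute $\|S_w\xi-S_w\xi_n\|^2$ as the tail $\sum_{\sigma\in\Gamma\setminus\Gamma_{n]}}(\vartheta_w(\sigma))^2|\langle Z_\sigma,\xi\rangle|^2$, and conclude convergence from the finiteness of the full sum. Your treatment of the ``in particular'' clause for $N$ via Example~\ref{exam-4-1} and Remark~\ref{rem-4-2} is in fact slightly more explicit than the paper's own proof, which leaves that step implicit.
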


\begin{proof}
In fact, by Theorem~\ref{thr-4-4} we know that $\mathcal{D}\subset \mathrm{Dom}\, S_w$. Let $\xi \in \mathrm{Dom}\, S_w$.
Then, by Theorem~\ref{thr-4-2}, we have
\begin{equation*}
  \sum_{\sigma \in \Gamma}\big(\vartheta_w(\sigma)\big)^2|\langle Z_{\sigma},\xi\rangle|^2<\infty,
\end{equation*}
which together with
\begin{equation*}
\begin{split}
\big\|S_w\xi-S_w\xi_n\big\|^2
  &= \sum_{\sigma \in \Gamma}\big(\vartheta_w(\sigma)\big)^2|\langle Z_{\sigma},\xi-\xi_n\rangle|^2\\
  & = \sum_{\sigma \in \Gamma}\big(\vartheta_w(\sigma)\big)^2|\langle Z_{\sigma},\xi\rangle|^2
    - \sum_{\sigma \in \Gamma_{n]}}\big(\vartheta_w(\sigma)\big)^2|\langle Z_{\sigma},\xi\rangle|^2
\end{split}
\end{equation*}
implies that $\|S_w\xi-S_w\xi_n\big\|\rightarrow 0$, hence $\big(S_w\xi_n\big)_{n\geq 1}$ converges in  $\mathsf{L}\!^2(Z)$.
\end{proof}

\begin{proposition}\label{prop-5-3}
Let $w$ be a regular transition kernel. Then the following statements hold true:
\begin{enumerate}
  \item[(1)] $\mathrm{Dom}\,S_w= \mathrm{Dom}\,N$;
  \item[(2)] $\big\langle \xi, S_w \xi\big\rangle \leq 2\alpha \big\langle \xi, N \xi\big\rangle$, $\forall\, \xi\in \mathrm{Dom}\,S_w$,
  where $\alpha=\sup_{k\geq 0}\sum_{j=0}^{\infty}w(j,k)$;
  \item[(3)] and\ $\big\langle S_w\xi, N \xi\big\rangle \leq \frac{1}{\beta} \big\langle S_w\xi, S_w \xi\big\rangle$, $\forall\, \xi\in \mathrm{Dom}\,S_w$,
  where $\beta=\sup_{j\geq 0}w(j,j)$.
\end{enumerate}
\end{proposition}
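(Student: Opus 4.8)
The plan is to reduce all three assertions to a single two-sided pointwise estimate on the eigenvalue function $\vartheta_w$ of $S_w$ from Theorem~\ref{thr-4-2} and formula~(\ref{eq-4-6}), namely
\begin{equation*}
  \beta\,\#(\sigma)\ \le\ \vartheta_w(\sigma)\ \le\ 2\alpha\,\#(\sigma),\qquad \sigma\in\Gamma ,
\end{equation*}
where $\alpha=\sup_{k\ge 0}\sum_{j=0}^{\infty}w(j,k)<\infty$ and $\beta=\inf_{j\ge 0}w(j,j)$, which is strictly positive precisely because $w$ is regular (this is the only place the regularity hypothesis is used, and it is what makes $\frac{1}{\beta}$ finite in assertion~(3)). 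The right-hand inequality has already been obtained inside the proof of Theorem~\ref{thr-4-6}. For the left-hand one, I would keep only the first, diagonal, sum in~(\ref{eq-4-6}) and discard the nonnegative off-diagonal sum, giving $\vartheta_w(\sigma)\ge\sum_{j=0}^{\infty}\mathbf{1}_{\sigma}(j)w(j,j)\ge\beta\sum_{j=0}^{\infty}\mathbf{1}_{\sigma}(j)=\beta\,\#(\sigma)$. Squaring yields $\beta^{2}(\#(\sigma))^{2}\le(\vartheta_w(\sigma))^{2}\le 4\alpha^{2}(\#(\sigma))^{2}$ for every $\sigma\in\Gamma$.

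For assertion~(1), I would compare the two domain criteria directly: $\xi\in\mathrm{Dom}\,N$ iff $\sum_{\sigma\in\Gamma}(\#(\sigma))^{2}|\langle Z_{\sigma},\xi\rangle|^{2}<\infty$ by Definition~\ref{def-3-2}, while $\xi\in\mathrm{Dom}\,S_w$ iff $\sum_{\sigma\in\Gamma}(\vartheta_w(\sigma))^{2}|\langle Z_{\sigma},\xi\rangle|^{2}<\infty$ by Theorem~\ref{thr-4-2}. The squared two-sided estimate makes these two nonnegative series comparable term by term up to the fixed constants $\beta^{2}>0$ and $4\alpha^{2}<\infty$, so one converges exactly when the other does; hence $\mathrm{Dom}\,S_w=\mathrm{Dom}\,N$.

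For assertions~(2) and~(3), I would fix $\xi\in\mathrm{Dom}\,S_w$; by~(1) we then also have $\xi\in\mathrm{Dom}\,N$, so $S_w\xi$ and $N\xi$ are both defined, and expanding them in the canonical ONB via Theorem~\ref{thr-4-2}, Definition~\ref{def-3-2} and the ONB expansion of $\xi$, together with orthonormality, produces the identities $\langle\xi,S_w\xi\rangle=\sum_{\sigma}\vartheta_w(\sigma)|\langle Z_{\sigma},\xi\rangle|^{2}$, $\langle\xi,N\xi\rangle=\sum_{\sigma}\#(\sigma)|\langle Z_{\sigma},\xi\rangle|^{2}$, $\langle S_w\xi,N\xi\rangle=\sum_{\sigma}\vartheta_w(\sigma)\#(\sigma)|\langle Z_{\sigma},\xi\rangle|^{2}$ and $\langle S_w\xi,S_w\xi\rangle=\sum_{\sigma}(\vartheta_w(\sigma))^{2}|\langle Z_{\sigma},\xi\rangle|^{2}$, all sums being over $\sigma\in\Gamma$ with nonnegative terms. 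Then~(2) follows term by term from $\vartheta_w(\sigma)\le 2\alpha\,\#(\sigma)$, and~(3) follows term by term from $\vartheta_w(\sigma)\#(\sigma)\le\frac{1}{\beta}(\vartheta_w(\sigma))^{2}$, which is obtained by multiplying the lower estimate $\beta\,\#(\sigma)\le\vartheta_w(\sigma)$ by the nonnegative number $\vartheta_w(\sigma)$ and dividing by $\beta$.

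I do not expect a serious obstacle; the single point requiring care is the \emph{order} of the argument. Assertions~(2) and~(3) refer to $\langle\xi,N\xi\rangle$ and $\langle S_w\xi,N\xi\rangle$ and hence tacitly require $\xi\in\mathrm{Dom}\,N$, which is not granted by the hypothesis $\xi\in\mathrm{Dom}\,S_w$ until~(1) has been proved. So~(1) must be established first and then invoked; everything after that is a routine comparison of nonnegative numerical series, with the regularity of $w$ entering only through the strict positivity of $\beta$.
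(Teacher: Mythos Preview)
Your proposal is correct and follows essentially the same route as the paper: both arguments rest on the two-sided eigenvalue estimate $\beta\,\#(\sigma)\le\vartheta_w(\sigma)\le 2\alpha\,\#(\sigma)$, with the upper bound giving Dom\,$N\subset$ Dom\,$S_w$ and assertion~(2), and the lower bound (obtained by dropping the off-diagonal sum in~(\ref{eq-4-6})) giving Dom\,$S_w\subset$ Dom\,$N$ and assertion~(3). Note also that you correctly take $\beta=\inf_{j\ge 0}w(j,j)$, which is what the argument actually requires and what the paper's own proof uses, even though the statement of the proposition reads ``$\beta=\sup_{j\ge 0}w(j,j)$'' --- that is a typo in the paper.
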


\begin{proof}
(1)\  According to Theorem~\ref{thr-4-6}, we need only to verify $\mathrm{Dom}\,S_w\subset \mathrm{Dom}\,N$.
Let $\xi \in \mathrm{Dom}\,S_w$. Then, by Theorem~\ref{thr-4-2}, we have
\begin{equation*}
  \sum_{\sigma \in \Gamma}\big(\vartheta_w(\sigma)\big)^2|\langle Z_{\sigma},\xi\rangle|^2<\infty.
\end{equation*}
On the other hand, it follows from the regularity of $w$ that
\begin{equation*}
  \#(\sigma)=\sum_{j=0}^{\infty}\mathbf{1}_{\sigma}(j)
  \leq \frac{1}{\beta}\sum_{j=0}^{\infty}\mathbf{1}_{\sigma}(j)w(j,j)
  \leq  \frac{1}{\beta}\vartheta_w(\sigma),\quad \forall\, \sigma\in \Gamma.
\end{equation*}
Thus
\begin{equation*}
  \sum_{\sigma \in \Gamma}\big(\#(\sigma)\big)^2|\langle Z_{\sigma},\xi\rangle|^2
    =\frac{1}{\beta^2} \sum_{\sigma \in \Gamma}\big(\vartheta_w(\sigma)\big)^2|\langle Z_{\sigma},\xi\rangle|^2<\infty,
\end{equation*}
which together with Definition~\ref{def-3-2} means that $\xi\in \mathrm{Dom}\,N$. It then follows from the arbitrariness of the choice $\xi \in \mathrm{Dom}\,S_w$ that $\mathrm{Dom}\,S_w\subset \mathrm{Dom}\,N$.

(2)\ Let $\xi\in \mathrm{Dom}\,S_w$. By Theorem~\ref{thr-4-2} as well as the expansion of $\xi$ with respect to the canonical ONB, we find
\begin{equation*}
  \big\langle \xi, S_w \xi\big\rangle
  = \sum_{\sigma\in \Gamma} \overline{\langle Z_{\sigma}, \xi\rangle} \vartheta_w(\sigma)\langle Z_{\sigma},\xi\rangle
  = \sum_{\sigma\in \Gamma} \vartheta_w(\sigma)|\langle Z_{\sigma},\xi\rangle|^2.
\end{equation*}
On the other hand, for each $\sigma\in \Gamma$, in view of the equality $\sup_{j\geq 0}w(j,j)\leq \alpha$, we have
\begin{equation*}
\begin{split}
  \vartheta_w(\sigma)
     & =\sum_{j=0}^{\infty}\mathbf{1}_{\sigma}(j)w(j,j)
        +\sum_{j,k=0}^{\infty}\big( 1- \mathbf{1}_{\sigma}(j)\big)\mathbf{1}_{\sigma}(k)w(j,k)\\
     & =\sum_{j=0}^{\infty}\mathbf{1}_{\sigma}(j)w(j,j)
        +\sum_{k=0}^{\infty} \mathbf{1}_{\sigma}(k) \Big[\sum_{j=0}^{\infty} \big( 1- \mathbf{1}_{\sigma}(j)\big) w(j,k)\Big]\\
     & \leq \alpha\sum_{j=0}^{\infty}\mathbf{1}_{\sigma}(j)
        +\sum_{k=0}^{\infty} \mathbf{1}_{\sigma}(k) \Big[\sum_{j=0}^{\infty}  w(j,k)\Big]\\
     & \leq \alpha\sum_{j=0}^{\infty}\mathbf{1}_{\sigma}(j)
        +\alpha\sum_{k=0}^{\infty} \mathbf{1}_{\sigma}(k) \\
     & = 2\alpha\#(\sigma).
\end{split}
\end{equation*}
Thus
\begin{equation*}
  \big\langle \xi, S_w \xi\big\rangle
  = \sum_{\sigma\in \Gamma} \vartheta_w(\sigma)|\langle Z_{\sigma},\xi\rangle|^2
  \leq 2\alpha\sum_{\sigma\in \Gamma} \#(\sigma)|\langle Z_{\sigma},\xi\rangle|^2
  = 2\alpha \big\langle \xi, N \xi\big\rangle.
\end{equation*}

(3)\ Let $\xi\in \mathrm{Dom}\,S_w$. Then, with the same argument as in the proof of (2), we can get
\begin{equation*}
  \big\langle S_w\xi, N \xi\big\rangle
  = \sum_{\sigma\in \Gamma} \vartheta_w(\sigma)\#(\sigma)|\langle Z_{\sigma},\xi\rangle|^2
  \leq \frac{1}{\beta}\sum_{\sigma\in \Gamma} \big(\vartheta_w(\sigma)\big)^2|\langle Z_{\sigma},\xi\rangle|^2
  = \frac{1}{\beta}\big\langle S_w\xi, S_w \xi\big\rangle.
\end{equation*}
Here we make use of the inequality $\#(\sigma)\leq \frac{1}{\beta}\vartheta_w(\sigma)$, which is proven above (see the proof of (1) for details).
\end{proof}

\begin{theorem}\label{thr-5-4}
Let $w$ be a regular transition kernel and $G$ be a densely-defined operator in $\mathsf{L}\!^2(Z)$. Suppose that
$G$ satisfies the following three requirements:\ \
\begin{enumerate}
  \item[(1)] $G$ is the infinitesimal generator of a strongly continuous contraction semigroup $P=(P_t)_{t \geq 0}$  on $\mathsf{L}^2(Z)$;
  \item[(2)] $G$ is $S_w$-admissible and amenable;
  \item[(3)] and there exists a constant $b$ such that, for all $\xi\in \mathcal{D}$, the following inequality holds
  \begin{equation}\label{eq-5-6}
    2\Re\big\langle N\xi, G\xi \big\rangle + \sum_{j,k=0}^{\infty}w(j,k) \big\langle \partial_j^*\partial_k\xi, N \partial_j^*\partial_k\xi \big\rangle
    \leq b  \big\langle \xi, N\xi \big\rangle,
  \end{equation}
  where $\mathcal{D}=\mathrm{span}\,\{Z_{\sigma}\mid \sigma\in \Gamma\}$ is the linear subspace (manifold) spanned by the canonical ONB
  $\{Z_{\sigma}\mid \sigma\in \Gamma\}$.
\end{enumerate}
Then the minimal QDS $\mathcal{T} =(\mathcal{T}_t)_{t\geq 0}$ constructed from $w$, $G$ and $(\partial_j^*\partial_k)_{j,k\geq 0}$ is conservative.
\end{theorem}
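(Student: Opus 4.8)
The plan is to deduce conservativity from Lemma~\ref{lem-2-2}, applied to $G$ together with the operators $L_{jk}=\sqrt{w(j,k)}\,\partial_j^*\partial_k$ ($j,k\geq 0$), from which the minimal QDS $\mathcal{T}$ was constructed in Theorem~\ref{thr-5-1} (we relabel $(L_{jk})_{j,k\geq0}$ as a single-indexed family $(L_m)_{m\geq1}$, exactly as in the proof of Theorem~\ref{thr-5-1}). As the data required by Lemma~\ref{lem-2-2} I would take: the self-adjoint operator $C=cN$, where $N$ is the number operator and $c=2\alpha$ with $\alpha=\sup_{k\geq0}\sum_{j=0}^{\infty}w(j,k)$ (note $\alpha\geq\inf_{j\geq0}w(j,j)>0$ by regularity, so $c>0$); the core $\mathcal{D}=\mathrm{span}\{Z_{\sigma}\mid\sigma\in\Gamma\}$; and the auxiliary positive self-adjoint operator $\Phi=S_w$. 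Here $\mathcal{D}$ is a core for $C=cN$ by Theorem~\ref{thr-4-4} applied to the canonical transition kernel, together with Example~\ref{exam-4-1} and Remark~\ref{rem-4-2}, which identify $N$ with $S_{w_0}$.

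I would first check hypothesis (a) of Lemma~\ref{lem-2-2}. Since $w$ is regular, Proposition~\ref{prop-5-3}(1) gives $\mathrm{Dom}\,C=\mathrm{Dom}\,N=\mathrm{Dom}\,S_w$, and $S_w$-admissibility of $G$ (Definition~\ref{def-5-2}) gives $\mathrm{Dom}\,G=\mathrm{Dom}\,S_w$; hence $\mathrm{Dom}\,C=\mathrm{Dom}\,G$ (and in particular $\mathcal{D}\subset\mathrm{Dom}\,G$, using Theorem~\ref{thr-4-4}). For the approximation requirement, given $\xi\in\mathrm{Dom}\,G=\mathrm{Dom}\,S_w$ set $\xi_n=\sum_{\sigma\in\Gamma_{n]}}\langle Z_{\sigma},\xi\rangle Z_{\sigma}\in\mathcal{D}$; then $\xi_n\to\xi$, and since $G$ is amenable by hypothesis and $N$ is amenable by Proposition~\ref{prop-5-2}, both $(G\xi_n)_{n\geq1}$ and $(C\xi_n)_{n\geq1}=(cN\xi_n)_{n\geq1}$ converge in $\mathsf{L}^2(Z)$. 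I expect this to be the main obstacle: it is here that one must combine the domain identification $\mathrm{Dom}\,S_w=\mathrm{Dom}\,N$ (which genuinely uses regularity of $w$) with the two amenability properties, and it is also the reason for taking $C$ proportional to $N$ rather than equal to $S_w$.

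Next, hypothesis (b): for $u\in\mathcal{D}\subset\mathrm{Dom}\,S_w$ the admissibility identity $G+G^*=-S_w$ gives $-2\Re\langle u,Gu\rangle=-\langle u,(G+G^*)u\rangle=\langle u,S_w u\rangle$, so with $\Phi=S_w$ --- positive and self-adjoint by Remark~\ref{rem-4-1} and with $\mathcal{D}$ in its domain by Theorem~\ref{thr-4-4} --- the required identity $-2\Re\langle u,Gu\rangle=\langle u,\Phi u\rangle$ holds, while Proposition~\ref{prop-5-3}(2) gives $\langle u,\Phi u\rangle\leq 2\alpha\langle u,Nu\rangle=\langle u,Cu\rangle$. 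Hypothesis (c) is immediate, since each $\partial_j^*\partial_k$ sends every basis vector to $0$ or to another basis vector, whence $L_{jk}(\mathcal{D})\subset\mathcal{D}\subset\mathrm{Dom}\,N=\mathrm{Dom}\,C$.

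Finally, hypothesis (d): for $u\in\mathcal{D}$ one has $Nu\in\mathcal{D}$ and $\partial_j^*\partial_k u\in\mathcal{D}$, and a short computation using Corollary~\ref{corol-4-9} and Proposition~\ref{prop-4-1} shows that the series $\sum_{j,k=0}^{\infty}w(j,k)\langle\partial_j^*\partial_k u,N\partial_j^*\partial_k u\rangle$ converges (in fact it equals $\langle Nu,S_w u\rangle$); consequently
\[
2\Re\langle Cu,Gu\rangle+\sum_{m=1}^{\infty}\langle L_m u,C L_m u\rangle
= c\Big[\,2\Re\langle Nu,Gu\rangle+\sum_{j,k=0}^{\infty}w(j,k)\langle\partial_j^*\partial_k u,N\partial_j^*\partial_k u\rangle\,\Big],
\]
which by hypothesis~(3) of the theorem (inequality~(\ref{eq-5-6})) is $\leq cb\langle u,Nu\rangle=b\langle u,Cu\rangle$. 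Thus (a)--(d) of Lemma~\ref{lem-2-2} all hold for $G$, $(L_m)_{m\geq1}$, $C=cN$, $\mathcal{D}$ and $\Phi=S_w$, so Lemma~\ref{lem-2-2} shows that the minimal QDS $\mathcal{T}$ is conservative, i.e.\ a QMS.
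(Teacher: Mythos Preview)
Your proposal is correct and follows essentially the same approach as the paper: you apply Lemma~\ref{lem-2-2} with $L_{jk}=\sqrt{w(j,k)}\,\partial_j^*\partial_k$, $C=2\alpha N$, $\Phi=S_w$ and core $\mathcal{D}$, verifying (a)--(d) through Proposition~\ref{prop-5-3}, $S_w$-admissibility, amenability of $G$ and $N$, and hypothesis~(3). The only differences are cosmetic --- you add the observation that $c>0$ follows from regularity and the parenthetical identification of the series in (d) with $\langle Nu,S_wu\rangle$, both of which are fine but not needed for the argument.
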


\begin{proof}
We first note that all the conditions of Theorem~\ref{thr-5-1} are satisfied.
Thus it makes sense to say the minimal QDS constructed from $w$, $G$ and $(\partial_j^*\partial_k)_{j,k\geq 0}$.

Now let $\mathcal{T} =(\mathcal{T}_t)_{t\geq 0}$ be the minimal QDS constructed from $w$, $G$ and $(\partial_j^*\partial_k)_{j,k\geq 0}$.
We show that $\mathcal{T} =(\mathcal{T}_t)_{t\geq 0}$ is conservative.
To do so, we set
\begin{equation*}
L_{jk}=\sqrt{w(j,k)}\,\partial_j^*\partial_k
\end{equation*}
for $j$, $k\geq 0$ and take $C=2\alpha N$, $\Phi=S_w$.
Then, $\mathcal{T} =(\mathcal{T}_t)_{t\geq 0}$ is also the minimal QDS constructed from $G$ and $(L_{jk})_{j,k\geq 0}$.

Clearly, $C$ is a self-adjoint operator in $\mathsf{L}\!^2(Z)$ and,
by Theorem~\ref{thr-4-4}, Remark~\ref{rem-4-2} as well as Example~\ref{exam-4-1}, $\mathcal{D}$ is a core for $C$.
By Proposition~\ref{prop-5-3}, Definition~\ref{def-5-2} as well as requirement~(2) of the present theorem, we have
\begin{equation*}
  \mathrm{Dom}\, C = \mathrm{Dom}\,N = \mathrm{Dom}\,S_w = \mathrm{Dom}\,G.
\end{equation*}
By Definition~\ref{def-5-3}, Proposition~\ref{prop-5-2} and requirement~(2), we know that for all $\xi\in \mathrm{Dom}\, C$ there exists
a sequence $\big(\xi_n\big)_{n\geq 1}\subset \mathcal{D}$ such that both $\big(G\xi_n\big)_{n\geq 1}$ and $\big(C\xi_n\big)_{n\geq 1}$ converge in $\mathsf{L}\!^2(Z)$.
By Theorem~\ref{thr-4-2} and Remark~\ref{rem-4-1}, $\Phi$ is a positive self-adjoint operator.
For all $\xi\in \mathcal{D}$, by Definition~\ref{def-5-2} and requirement~(2), we find
\begin{equation*}
  -2\Re\langle \xi,G\xi\rangle
  = -[\langle \xi,G\xi\rangle + \langle G\xi,\xi\rangle]
   = -\langle \xi,(G + G^*)\xi\rangle
   = \langle \xi,\Phi\xi\rangle
   \leq \langle \xi,C\xi\rangle.
\end{equation*}
It follows easily from Lemma~\ref{lem-3-1} that
$L_{jk}(\mathcal{D}) \subset \mathcal{D} \subset \mathrm{Dom}\, C$ holds for all $j$, $k\geq 0$.
Finally, by requirement~(3), we have
\begin{equation*}
2\Re\big\langle C\xi, G\xi \big\rangle + \sum_{j,k=0}^{\infty} \big\langle L_{jk}\xi, N L_{jk}\xi \big\rangle
    \leq b  \big\langle \xi, C\xi \big\rangle,\quad \forall\, \xi\in \mathcal{D}.
\end{equation*}
Therefore, operators $G$ and $(L_{jk})_{j,k\geq 0}$ satisfy all the conditions in Lemma~\ref{lem-2-2},
which means that $\mathcal{T} =(\mathcal{T}_t)_{t\geq 0}$ is conservative.
\end{proof}

As is seen, Theorem~\ref{thr-5-4} actually offers a sufficient condition for the existence of
a QMS with a formal generator of form (\ref{eq-1-2}).
The next example then shows the effectiveness of this condition.

\begin{example}\label{exam-5-1}
Let $f\colon \Gamma \rightarrow \mathbb{R}$ be a bounded function and define $H_f$ as
\begin{equation}\label{eq-5-7}
  H_f= \sum_{\sigma\in \Gamma}f(\sigma)|Z_{\sigma}\rangle\!\langle Z_{\sigma}|.
\end{equation}
Then $H_f$ is a bounded self-adjoint operator on $\mathsf{L}\!^2(Z)$ with $\mathrm{Dom}\, H_f=\mathsf{L}\!^2(Z)$.
Let $w$ be a regular transition kernel and
$G= -\mathrm{i}H_f-\frac{1}{2}S_w$. Then, it is not hard to verify that $w$ and $G$ satisfy all the requirements in Theorem~\ref{thr-5-4}.
Thus there exists a minimal QMS $\mathcal{T} =(\mathcal{T}_t)_{t\geq 0}$ on $\mathfrak{B}(\mathsf{L}^2(Z))$ satisfying
(\ref{eq-5-4}).
\end{example}

\begin{remark}\label{rem-5-2}
From Theorem~\ref{thr-5-1}, Theorem~\ref{thr-5-4} and Example~\ref{exam-5-1}, one naturally comes to the conclusion that under some mild conditions
there does exist a QMS with a formal generator of form (\ref{eq-1-2}).
Such a QMS belongs to the category of quantum exclusion semigroups.
\end{remark}

We now show some basic properties of the QMS mentioned in Example~\ref{exam-5-1}.
Recall that $\mathbb{E}$ denotes the expectation with respect to $\mathbb{P}$ (see Section~\ref{sec-3} for details),
which is a projection operator on $\mathsf{L}\!^2(Z)$, thus belongs to $\mathfrak{B}(\mathsf{L}\!^2(Z))$.

\begin{proposition}\label{prop-5-5}
Let $f\colon \Gamma \rightarrow \mathbb{R}$ be a bounded function and $w$ a regular transition kernel.
Let $\mathcal{T} =(\mathcal{T}_t)_{t\geq 0}$ be the minimal QMS on $\mathfrak{B}(\mathsf{L}^2(Z))$ satisfying (\ref{eq-5-4}),
where $G= -\mathrm{i}H_f-\frac{1}{2}S_w$ with $H_f$ being given by (\ref{eq-5-7}). Then, the expectation $\mathbb{E}$
is a subharmonic projection operator for $\mathcal{T}$, namely it satisfies that\
$\mathbb{E} \leq \mathcal{T}_t(\mathbb{E})$ for all $t\geq 0$.
\end{proposition}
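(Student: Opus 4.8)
The plan is to exploit the very special position of the one–dimensional range of $\mathbb{E}$ inside the data $\big(G,(\partial_j^*\partial_k)\big)$ defining $\mathcal{T}$. Note first that $\mathbb{E}=|Z_{\emptyset}\rangle\!\langle Z_{\emptyset}|$ is the orthogonal projection onto $\mathbb{C}Z_{\emptyset}$. By Lemma~\ref{lem-3-1}, $\partial_kZ_{\emptyset}=\mathbf{1}_{\emptyset}(k)Z_{\emptyset\setminus k}=0$ for every $k\geq 0$, hence $\partial_j^*\partial_kZ_{\emptyset}=0$ for all $j,k\geq 0$; consequently $S_wZ_{\emptyset}=\vartheta_w(\emptyset)Z_{\emptyset}=0$ (see (\ref{eq-4-6})) and $H_fZ_{\emptyset}=f(\emptyset)Z_{\emptyset}$, so that $GZ_{\emptyset}=-\mathrm{i}H_fZ_{\emptyset}-\frac{1}{2}S_wZ_{\emptyset}=-\mathrm{i}f(\emptyset)Z_{\emptyset}$ with $f(\emptyset)\in\mathbb{R}$. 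Moreover $Z_{\emptyset}\in\mathrm{Dom}\,S_w=\mathrm{Dom}\,G$ by Proposition~\ref{prop-4-1} and the $S_w$-admissibility of $G$, so $Z_{\emptyset}$ is a legitimate test vector in the integral equation (\ref{eq-5-4}).

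First I would reduce the claim to a scalar identity. Since $\mathcal{T}$ is conservative, $\mathcal{T}_t(\mathbb{E})\geq\mathbb{E}$ is equivalent to $\mathcal{T}_t(I-\mathbb{E})\leq I-\mathbb{E}$. As $\mathcal{T}_t$ is positive and $0\leq I-\mathbb{E}\leq I$, the operator $\mathcal{T}_t(I-\mathbb{E})$ is positive, and it suffices to prove that
\begin{equation*}
  h(t):=\big\langle Z_{\emptyset},\,\mathcal{T}_t(I-\mathbb{E})Z_{\emptyset}\big\rangle=0\qquad\text{for all }t\geq 0.
\end{equation*}
Indeed, granting $h(t)=0$, positivity of $\mathcal{T}_t(I-\mathbb{E})$ forces $\mathcal{T}_t(I-\mathbb{E})Z_{\emptyset}=0$, hence $\mathcal{T}_t(I-\mathbb{E})\mathbb{E}=0=\mathbb{E}\,\mathcal{T}_t(I-\mathbb{E})$, and therefore $\mathcal{T}_t(I-\mathbb{E})=(I-\mathbb{E})\mathcal{T}_t(I-\mathbb{E})(I-\mathbb{E})\leq\|\mathcal{T}_t(I-\mathbb{E})\|_{\infty}(I-\mathbb{E})\leq I-\mathbb{E}$ by (\ref{eq-2-2}).

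To prove $h\equiv 0$ I would put $X=I-\mathbb{E}$ and $\xi=\eta=Z_{\emptyset}$ into (\ref{eq-5-4}). Then $h(0)=\langle Z_{\emptyset},(I-\mathbb{E})Z_{\emptyset}\rangle=0$; in the integrand the jump term $\sum_{j,k}w(j,k)\langle\partial_j^*\partial_kZ_{\emptyset},\mathcal{T}_s(I-\mathbb{E})\partial_j^*\partial_kZ_{\emptyset}\rangle$ vanishes because $\partial_j^*\partial_kZ_{\emptyset}=0$, and the two drift terms combine, using $GZ_{\emptyset}=-\mathrm{i}f(\emptyset)Z_{\emptyset}$ with $f(\emptyset)$ real, into
\begin{equation*}
  \big\langle Z_{\emptyset},\mathcal{T}_s(I-\mathbb{E})GZ_{\emptyset}\big\rangle+\big\langle GZ_{\emptyset},\mathcal{T}_s(I-\mathbb{E})Z_{\emptyset}\big\rangle=\big(-\mathrm{i}f(\emptyset)+\overline{-\mathrm{i}f(\emptyset)}\big)h(s)=0.
\end{equation*}
Hence $h(t)=\int_0^t 0\,ds=0$ for every $t\geq 0$, which by the reduction above yields $\mathcal{T}_t(\mathbb{E})\geq\mathbb{E}$, i.e.\ $\mathbb{E}$ is a subharmonic projection for $\mathcal{T}$.

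I do not expect a serious obstacle: the whole argument rests on the two structural facts $\partial_j^*\partial_kZ_{\emptyset}=0$ and $GZ_{\emptyset}=-\mathrm{i}f(\emptyset)Z_{\emptyset}$ with $f(\emptyset)\in\mathbb{R}$, which say precisely that $\mathbb{C}Z_{\emptyset}$ is a one-dimensional subspace on which $G$ acts by a purely imaginary scalar and which is annihilated by all the operators $\partial_j^*\partial_k$ — this is an instance of the general phenomenon of subharmonic projections. The only points needing a little care are the domain bookkeeping that makes $Z_{\emptyset}$ admissible in (\ref{eq-5-4}) and the final passage, via positivity of $\mathcal{T}_t(I-\mathbb{E})$, from the scalar identity $h(t)=0$ to the operator inequality $\mathcal{T}_t(I-\mathbb{E})\leq I-\mathbb{E}$.
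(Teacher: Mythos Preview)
Your argument is correct, and it takes a genuinely different route from the paper. The paper identifies the same two structural facts you use --- that $\mathrm{Ran}\,\mathbb{E}=\mathbb{C}Z_{\emptyset}$ is invariant under the contraction semigroup $(P_t)$ generated by $G$, and that $\partial_j^*\partial_k\xi=\mathbb{E}\partial_j^*\partial_k\xi$ for $\xi\in\mathrm{Ran}\,\mathbb{E}$ --- but then simply invokes Theorem~III.1 of Fagnola--Rebolledo \cite{fag-reb} to conclude subharmonicity. You instead give a self-contained argument: you feed $X=I-\mathbb{E}$ and $\xi=\eta=Z_{\emptyset}$ directly into the defining integral equation (\ref{eq-5-4}), obtain $h(t)\equiv 0$, and then pass from this scalar identity to the operator inequality via positivity and conservativity. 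Your route is essentially an in-line proof of the relevant special case of the Fagnola--Rebolledo criterion; it costs a few more lines but avoids the external reference and makes transparent exactly why the one-dimensional range of $\mathbb{E}$ forces subharmonicity. The paper's route is shorter but opaque unless one has \cite{fag-reb} at hand.
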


\begin{proof}
It can be verified that the contraction semigroup $(P_t)_{t\geq 0}$ generated by $G=-\mathrm{i}H_f-\frac{1}{2}S_w$ can be represented as
\begin{equation}\label{eq-5-8}
  P_t\xi
   = \sum_{\sigma \in \Gamma} e^{-[\mathrm{i}f(\sigma)+\vartheta_w(\sigma)/2]t} \langle Z_{\sigma},\xi\rangle Z_{\sigma},\quad t\geq 0,\, \xi \in \mathsf{L}\!^2(Z).
\end{equation}
On the other hand, we can easily find that the range of $\mathbb{E}$ has the following form
\begin{equation*}
\mathrm{Ran}\,\mathbb{E} = \{\alpha Z_{\emptyset} \mid \alpha \in \mathbb{C}\},
\end{equation*}
which together with (\ref{eq-5-8})
shows that $\mathrm{Ran}\,\mathbb{E}$ is invariant under the operators $P_t$, $t\geq 0$.
Now let $\xi\in \mathrm{Dom}\,G \cap \mathrm{Ran}\,\mathbb{E}$ and $j$, $k\geq 0$.
Then $\xi=\lambda Z_{\emptyset}$ for some $\lambda \in \mathbb{C}$. Clearly
$\partial_j^*\partial_k\xi =0$, which gives
\begin{equation*}
  \partial_j^*\partial_k\xi=\mathbb{E}\partial_j^*\partial_k\xi.
\end{equation*}
Thus, by Theorem III.1 in \cite{fag-reb}, $\mathbb{E}$ is a subharmonic projection for $\mathcal{T}$.
\end{proof}

Decoherence-free subalgebras play a fundamental role in the the study of decoherence.
As in \cite{dhahri}, we define the decoherence-free subalgebra $\mathfrak{N}(\mathcal{T})$ of $\mathcal{T}$ as follows.
\begin{equation}\label{eq-5-9}
\begin{split}
&\mathfrak{N}(\mathcal{T})\\
&=\big\{ X \in \mathfrak{B}(\mathsf{L}\!^2(Z)) \mid
 \mathcal{T}_t(X^*X)=\mathcal{T}_t(X^*)\mathcal{T}_t(X),\
      \mathcal{T}_t(XX^*)=\mathcal{T}_t(X)\mathcal{T}_t(X^*),\ \ t\geq 0\big\}.
\end{split}
\end{equation}
According to \cite{dhahri}, $\mathfrak{N}(\mathcal{T})$ is a von Neumann subalgebra of $\mathfrak{B}(\mathsf{L}\!^2(Z))$.

\begin{proposition}\label{prop-5-6}
Let $f\colon \Gamma \rightarrow \mathbb{R}$ be a bounded function and $w$ a regular transition kernel.
Let $\mathcal{T} =(\mathcal{T}_t)_{t\geq 0}$ be the minimal QMS on $\mathfrak{B}(\mathsf{L}^2(Z))$ satisfying (\ref{eq-5-4}),
where $G= -\mathrm{i}H_f-\frac{1}{2}S_w$ with $H_f$ being given by (\ref{eq-5-7}).
Then, on the decoherence-free subalgebra $\mathfrak{N}(\mathcal{T})$, $\mathcal{T}$ takes the following form
\begin{equation}\label{eq-5-10}
\mathcal{T}_t(X) = \mathrm{e}^{\mathrm{i}tH_f}X\mathrm{e}^{-\mathrm{i}tH_f},\quad X \in \mathfrak{N}(\mathcal{T}),\ t\geq 0.
\end{equation}
\end{proposition}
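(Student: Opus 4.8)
The plan is to show that, for $X\in\mathfrak{N}(\mathcal{T})$, the dissipative part of the formal generator~(\ref{eq-1-2}) annihilates $X$, so that the evolution of $X$ reduces to the unitary conjugation generated by the Hamiltonian $H_f$, which is precisely the operator $H=\mathrm{i}\big(G+\tfrac12 S_w\big)$ attached to the present choice $G=-\mathrm{i}H_f-\tfrac12 S_w$ (see Remark~\ref{rem-5-1}). The single structural fact I would borrow from the general theory of decoherence-free subalgebras (see~\cite{dhahri}) is that every $X\in\mathfrak{N}(\mathcal{T})$ commutes with each jump operator $\partial_j^*\partial_k$ for which $w(j,k)>0$; since $\mathfrak{N}(\mathcal{T})$ is a von Neumann algebra, it is $\ast$-closed, and hence such an $X$ also commutes with $\partial_k^*\partial_j$ for those $(j,k)$. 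In the uniformly continuous case this is the familiar remark that differentiating $\mathcal{T}_t(X^*X)=\mathcal{T}_t(X^*)\mathcal{T}_t(X)$ at $t=0$ produces $\sum_{j,k}w(j,k)\,[\partial_j^*\partial_k,X]^*[\partial_j^*\partial_k,X]=0$; for the unbounded generator here one argues with the sesquilinear forms associated with $\mathcal{L}$, which is exactly what~\cite{dhahri} provides, consistently with the subharmonic-projection results of~\cite{fag-reb} already used above.

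I would first observe that $\mathfrak{N}(\mathcal{T})$ is invariant under each $\mathcal{T}_t$: this is immediate from~(\ref{eq-5-9}), the semigroup law, and the fact that positive unital maps are $\ast$-preserving, using that $\mathcal{T}_s(Y^*)\mathcal{T}_s(Y)=\mathcal{T}_s(Y^*Y)$ for $Y\in\mathfrak{N}(\mathcal{T})$. Consequently, for $X\in\mathfrak{N}(\mathcal{T})$ one has $[\partial_j^*\partial_k,\mathcal{T}_s(X)]=[\partial_k^*\partial_j,\mathcal{T}_s(X)]=0$ for all $s\geq 0$ and all $(j,k)$ with $w(j,k)>0$. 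Fixing such an $X$ and vectors $\xi,\eta\in\mathrm{Dom}\,G=\mathrm{Dom}\,S_w$, I would then use the boundedness of the operators $\partial_j^*\partial_k$, the natural convergence defining $S_w$ (Definition~\ref{def-4-1} and Theorem~\ref{thr-4-2}), and the commutations just recorded, to verify that $\mathcal{T}_s(X)\eta\in\mathrm{Dom}\,S_w$ with $S_w\mathcal{T}_s(X)\eta=\mathcal{T}_s(X)S_w\eta$, whence
\begin{equation*}
  \sum_{j,k=0}^{\infty}w(j,k)\big\langle\partial_j^*\partial_k\xi,\,\mathcal{T}_s(X)\partial_j^*\partial_k\eta\big\rangle
  =\big\langle\xi,\,\mathcal{T}_s(X)S_w\eta\big\rangle .
\end{equation*}
Since $G=-\mathrm{i}H_f-\tfrac12 S_w$ and $G^*=\mathrm{i}H_f-\tfrac12 S_w$ with $H_f$ bounded, substituting this identity into the integrand of~(\ref{eq-5-4}) makes the two contributions of $S_w$ cancel and leaves exactly $\mathrm{i}\big\langle\xi,\,[H_f,\mathcal{T}_s(X)]\eta\big\rangle$. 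Thus~(\ref{eq-5-4}) reduces to
\begin{equation*}
  \big\langle\xi,\,\mathcal{T}_t(X)\eta\big\rangle
  =\big\langle\xi,\,X\eta\big\rangle+\mathrm{i}\int_0^t\big\langle\xi,\,[H_f,\mathcal{T}_s(X)]\eta\big\rangle\,ds
\end{equation*}
for all $\xi,\eta\in\mathrm{Dom}\,G$, hence, by density, for all $\xi,\eta\in\mathsf{L}\!^2(Z)$.

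The last display says that $t\mapsto\mathcal{T}_t(X)$ solves, weakly in $\mathfrak{B}(\mathsf{L}\!^2(Z))$, the linear integral equation $Y_t=X+\mathrm{i}\int_0^t[H_f,Y_s]\,ds$. Since $H_f$ is bounded, the map $Y\mapsto\mathrm{i}[H_f,Y]$ is a bounded linear operator on $\mathfrak{B}(\mathsf{L}\!^2(Z))$, so this equation has a unique solution, namely $Y_t=\mathrm{e}^{\mathrm{i}tH_f}X\mathrm{e}^{-\mathrm{i}tH_f}$; this proves~(\ref{eq-5-10}). The main obstacle is the structural input taken from~\cite{dhahri}, namely the passage from the multiplicativity defining $\mathfrak{N}(\mathcal{T})$ to the commutation of $X$ with the jump operators $\partial_j^*\partial_k$ in the present setting of an unbounded generator; a secondary, purely technical point is the interchange of the bounded operator $\mathcal{T}_s(X)$ with the series defining the unbounded operator $S_w$, which is handled by working with the square-array partial sums of $S_w$ --- all bounded and, by the above, commuting with $\mathcal{T}_s(X)$ --- and passing to the limit by continuity of $\mathcal{T}_s(X)$.
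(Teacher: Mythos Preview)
Your argument is correct, but it proceeds along a different route from the paper. The paper's proof is a one-line black-box citation: it sets $L_{jk}=\sqrt{w(j,k)}\,\partial_j^*\partial_k$, takes $D=\mathrm{Dom}\,G$, asserts that the hypotheses of Theorem~3.2 in~\cite{dhahri} are satisfied, and reads off~(\ref{eq-5-10}) directly from that theorem. No explicit manipulation of the integral equation~(\ref{eq-5-4}) is carried out.

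You instead borrow from~\cite{dhahri} only the commutant characterization of $\mathfrak{N}(\mathcal{T})$ --- precisely the fact the paper itself quotes later, in the proof of Proposition~\ref{prop-5-7} --- and from it rebuild the conclusion by hand: you show $\mathcal{T}_s(X)$ commutes with the square-array partial sums of $S_w$, pass to the limit to get $S_w\mathcal{T}_s(X)=\mathcal{T}_s(X)S_w$ on $\mathrm{Dom}\,S_w$, cancel the $S_w$ terms in the integrand of~(\ref{eq-5-4}), and solve the resulting bounded-generator evolution equation. This is more explicit and more self-contained in the context of the present paper, and it has the side benefit of essentially proving Proposition~\ref{prop-5-7} along the way; the paper's approach, by contrast, is shorter but delegates all the work to the cited theorem. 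Both ultimately rest on the structural input from~\cite{dhahri} that you flag as the main obstacle.
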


\begin{proof}
Let $L_{jk}=\sqrt{w(j,k)}\,\partial_j^*\partial_k$ for $j$, $k\geq 0$
and take $D=\mathrm{Dom}\, G$. Then, after a lengthy but easy check, we can find
that the operators $L_{jk}$, linear manifold $D$ and QMS $\mathcal{T}$ satisfy all conditions listed in Theorem 3.2 of \cite{dhahri}.
Thus, by that theorem, relation (\ref{eq-5-10}) holds.
\end{proof}

In what follows, we always assume that $f\colon \Gamma \rightarrow \mathbb{R}$ is a bounded function and $w$ is a regular transition kernel.
Recall that $S_w$ is the 2D-weighted number operator associated with $w$.
The following two propositions then characterize the decoherence-free subalgebra $\mathfrak{N}(\mathcal{T})$ in terms of operators $S_w$, $\partial_k$ and $\partial_k^*$.

\begin{proposition}\label{prop-5-7}
Let $\mathcal{T} =(\mathcal{T}_t)_{t\geq 0}$ be the minimal QMS on $\mathfrak{B}(\mathsf{L}^2(Z))$ satisfying (\ref{eq-5-4}),
where $G= -\mathrm{i}H_f-\frac{1}{2}S_w$ with $H_f$ being given by (\ref{eq-5-7}). Then,
for all $X \in \mathfrak{N}(\mathcal{T})$ and all $t\geq 0$, $\mathcal{T}_t(X)$ leaves $\mathrm{Dom}\, S_w$ invariant, and moreover it holds  on $\mathrm{Dom}\, S_w$ that
\begin{equation}\label{eq-5-11}
   S_w\mathcal{T}_t(X) = \mathcal{T}_t(X)S_w.
\end{equation}
In particular, for all $X \in \mathfrak{N}(\mathcal{T})$, $\mathrm{Dom}\, S_w$ is an invariant subspace of $X$, and it holds on $\mathrm{Dom}\, S_w$ that
\begin{equation}\label{eq-5-12}
  S_wX = XS_w.
\end{equation}
\end{proposition}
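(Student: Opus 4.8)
The plan is to deduce the general statement from its ``in particular'' part, which is really the heart of the matter. Indeed, by Proposition~\ref{prop-5-6}, every $X\in\mathfrak{N}(\mathcal{T})$ satisfies $\mathcal{T}_t(X)=\mathrm{e}^{\mathrm{i}tH_f}X\mathrm{e}^{-\mathrm{i}tH_f}$, and by (\ref{eq-5-7}) the operator $H_f$ is bounded, self-adjoint and diagonal in the canonical ONB, so $\mathrm{e}^{\pm\mathrm{i}tH_f}Z_\sigma=\mathrm{e}^{\pm\mathrm{i}tf(\sigma)}Z_\sigma$ for all $\sigma\in\Gamma$. Since these eigenvalues have modulus $1$, Theorem~\ref{thr-4-2} tells us that $\mathrm{e}^{\pm\mathrm{i}tH_f}$ leaves $\mathrm{Dom}\,S_w$ invariant, and the spectral formula (\ref{eq-4-7}) tells us that $S_w$ commutes with $\mathrm{e}^{\pm\mathrm{i}tH_f}$ on $\mathrm{Dom}\,S_w$. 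Hence, once we know that $X$ leaves $\mathrm{Dom}\,S_w$ invariant and that $S_wX=XS_w$ there, for $\xi\in\mathrm{Dom}\,S_w$ we successively obtain $\mathrm{e}^{-\mathrm{i}tH_f}\xi\in\mathrm{Dom}\,S_w$, then $X\mathrm{e}^{-\mathrm{i}tH_f}\xi\in\mathrm{Dom}\,S_w$, then $\mathcal{T}_t(X)\xi\in\mathrm{Dom}\,S_w$, and, pushing $S_w$ past each of the three factors in turn, $S_w\mathcal{T}_t(X)\xi=\mathcal{T}_t(X)S_w\xi$. So the whole proposition reduces to (\ref{eq-5-12}).

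To prove (\ref{eq-5-12}) I would argue as follows. Set $L_{jk}=\sqrt{w(j,k)}\,\partial_j^*\partial_k$ as in the proof of Theorem~\ref{thr-5-1}, so that $L_{jk}^*L_{jk}=w(j,k)\,\partial_k^*\partial_j\partial_j^*\partial_k$ and, for $\xi\in\mathrm{Dom}\,S_w$, the square-array partial sums $T_n\xi:=\sum_{j,k=0}^n w(j,k)\,\partial_k^*\partial_j\partial_j^*\partial_k\,\xi$ converge to $S_w\xi$ by the very definition of $\mathrm{Dom}\,S_w$ (Definition~\ref{def-4-1}, natural convergence). The key input is that every $X\in\mathfrak{N}(\mathcal{T})$ commutes, as a bounded operator, with each $L_{jk}$ and with each $L_{jk}^*$; this is the standard description of a decoherence-free subalgebra, and it is precisely what Theorem~3.2 of \cite{dhahri} delivers here, since the hypotheses of that theorem (for the family $(L_{jk})_{j,k\geq0}$, the manifold $D=\mathrm{Dom}\,G$ and the semigroup $\mathcal{T}$) were already verified in the proof of Proposition~\ref{prop-5-6}. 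Granting this, $X$ commutes with every finite sum $T_n=\sum_{j,k=0}^n L_{jk}^*L_{jk}$. Now fix $\xi\in\mathrm{Dom}\,S_w$: since $X$ is bounded, $T_n(X\xi)=X(T_n\xi)\to XS_w\xi$, so the sequence $\big(T_n(X\xi)\big)_{n\geq0}$ converges in $\mathsf{L}\!^2(Z)$, which by Definition~\ref{def-4-1} says exactly that $X\xi\in\mathrm{Dom}\,S_w$, and then $S_w(X\xi)=\lim_n T_n(X\xi)=XS_w\xi$. This gives (\ref{eq-5-12}), and combined with the first paragraph also (\ref{eq-5-11}).

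The only genuinely delicate point is this last passage from commutation with the bounded truncations $T_n$ to commutation with the possibly unbounded operator $S_w$; it goes through without fuss precisely because $\mathrm{Dom}\,S_w$ was defined through the natural convergence of exactly these truncations, so that continuity of $X$ is all that is needed. The other ingredient I would want to pin down is the assertion that $[X,L_{jk}]=[X,L_{jk}^*]=0$ for $X\in\mathfrak{N}(\mathcal{T})$, which I would cite from the decoherence-free subalgebra theory of \cite{dhahri} under the hypotheses already checked for Proposition~\ref{prop-5-6}; alternatively, one can observe that $\mathfrak{N}(\mathcal{T})$ is globally invariant under $\mathcal{T}$ and apply (\ref{eq-5-12}) directly to $\mathcal{T}_t(X)$, which yields (\ref{eq-5-11}) with essentially no extra work but relies on that invariance instead.
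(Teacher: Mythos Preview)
Your proof is correct and rests on the same core mechanism as the paper's: elements of $\mathfrak{N}(\mathcal{T})$ commute with each $L_{jk}$ and $L_{jk}^*$ by the commutant characterization from \cite{dhahri}, hence with the bounded truncations $\sum_{j,k\leq n}L_{jk}^*L_{jk}$, and Definition~\ref{def-4-1} (natural convergence) carries this over to $S_w$. The only difference is the order of the argument: the paper establishes $[\mathcal{T}_t(X),L_{jk}]=[\mathcal{T}_t(X),L_{jk}^*]=0$ directly for all $t\geq 0$ (from the commutant family $\{e^{-\mathrm{i}tH_f}L_{jk}e^{\mathrm{i}tH_f},\,e^{-\mathrm{i}tH_f}L_{jk}^*e^{\mathrm{i}tH_f}\}$ combined with Proposition~\ref{prop-5-6}) and then reads off (\ref{eq-5-12}) at $t=0$, whereas you first prove (\ref{eq-5-12}) and then conjugate by $e^{\pm\mathrm{i}tH_f}$ to obtain (\ref{eq-5-11}).
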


\begin{proof}
According to \cite{dhahri}, the decoherence-free subalgebra $\mathfrak{N}(\mathcal{T})$ coincides with the commutator of the following
operator family
\begin{equation}\label{eq-5-13}
  \mathfrak{D}(\mathcal{T})
  =\left\{\mathrm{e}^{-\mathrm{i}tH_f}L_{jk}\mathrm{e}^{\mathrm{i}tH_f},\, \mathrm{e}^{-\mathrm{i}tH_f}L_{jk}^*\mathrm{e}^{\mathrm{i}tH_f}\mid j,\, k\geq 0,\, t\geq 0\right\},
\end{equation}
where $L_{jk}=\sqrt{w(j,k)}\,\partial_j^*\partial_k$. Let $X \in \mathfrak{N}(\mathcal{T})$ and $t\geq 0$. Then, by using the above relation, we have
\begin{equation*}
  \mathrm{e}^{-\mathrm{i}tH_f}L_{jk}\mathrm{e}^{\mathrm{i}tH_f} X = X \mathrm{e}^{-\mathrm{i}tH_f}L_{jk}\mathrm{e}^{\mathrm{i}tH_f},\quad j,\, k\geq 0.
\end{equation*}
which together with Proposition~\ref{prop-5-6} implies that
\begin{equation*}
  L_{jk}\mathcal{T}_t(X) = \mathcal{T}_t(X)L_{jk},\quad j,\, k\geq 0.
\end{equation*}
Similarly, we can get
\begin{equation*}
  L_{jk}^*\mathcal{T}_t(X) = \mathcal{T}_t(X)L_{jk}^*,\quad j,\, k\geq 0.
\end{equation*}
Thus
\begin{equation}\label{eq-5-14}
   L_{jk}^*L_{jk}\mathcal{T}_t(X) = \mathcal{T}_t(X)L_{jk}^*L_{jk},\quad j,\, k\geq 0,
\end{equation}
which, together with the notation $L_{jk}=\sqrt{w(j,k)}\,\partial_j^*\partial_k$ as well as Definition~\ref{def-4-1}, implies that
$\mathcal{T}_t(X)$ leaves $\mathrm{Dom}\, S_w$ invariant and $S_w\mathcal{T}_t(X) = \mathcal{T}_t(X)S_w$ on $\mathrm{Dom}\, S_w$.
\end{proof}

\begin{proposition}\label{prop-5-8}
Let $\mathcal{T} =(\mathcal{T}_t)_{t\geq 0}$ be the minimal QMS on $\mathfrak{B}(\mathsf{L}^2(Z))$ satisfying (\ref{eq-5-4}),
where $G= -\mathrm{i}H_f-\frac{1}{2}S_w$ with $H_f$ being given by (\ref{eq-5-7}). Then,
for all $X \in \mathfrak{N}(\mathcal{T})$, it holds that
\begin{equation}\label{eq-5-15}
  \partial_k\mathcal{T}_t(X)\partial_k=0,\quad \partial_k^*\mathcal{T}_t(X)\partial_k^*=0,\quad k\geq 0,\ t\geq 0.
\end{equation}
In particular, for all $X \in \mathfrak{N}(\mathcal{T})$, it holds that
\begin{equation}\label{eq-5-16}
  \partial_kX\partial_k=0,\quad \partial_k^*X\partial_k^*=0,\quad k\geq 0.
\end{equation}
\end{proposition}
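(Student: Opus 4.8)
The plan is to exploit the commutation relations that are already established \emph{inside} the proof of Proposition~\ref{prop-5-7}, combined with the equal-time CAR of Lemma~\ref{lem-3-2}.

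First I would recall that, in the proof of Proposition~\ref{prop-5-7}, it is shown that for every $X\in\mathfrak{N}(\mathcal{T})$ and every $t\geq 0$ the bounded operator $Y:=\mathcal{T}_t(X)$ commutes with each $L_{jk}=\sqrt{w(j,k)}\,\partial_j^*\partial_k$. Specializing to $j=k$ and invoking the regularity of $w$, which guarantees $w(k,k)>0$, one may cancel the strictly positive scalar $\sqrt{w(k,k)}$ and conclude that $Y$ commutes with the projection $\partial_k^*\partial_k$, hence also with $\partial_k\partial_k^*=I-\partial_k^*\partial_k$.

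Next, from $\partial_k\partial_k=\partial_k^*\partial_k^*=0$ and $\partial_k\partial_k^*+\partial_k^*\partial_k=I$ (Lemma~\ref{lem-3-2}) I would record the elementary identities
\begin{equation*}
  \partial_k=(\partial_k\partial_k^*)\,\partial_k\,(\partial_k^*\partial_k),\qquad
  \partial_k^*=(\partial_k^*\partial_k)\,\partial_k^*\,(\partial_k\partial_k^*),
\end{equation*}
together with $(\partial_k^*\partial_k)(\partial_k\partial_k^*)=0$ and $(\partial_k\partial_k^*)(\partial_k^*\partial_k)=0$. Substituting the first factorization into $\partial_k\,Y\,\partial_k$ and pushing the inner projection $\partial_k^*\partial_k$ past $Y$ by the commutation obtained above produces the factor $(\partial_k^*\partial_k)(\partial_k\partial_k^*)=0$, so that $\partial_k\,Y\,\partial_k=0$; substituting the second factorization into $\partial_k^*\,Y\,\partial_k^*$ and pushing $\partial_k\partial_k^*$ past $Y$ produces $(\partial_k\partial_k^*)(\partial_k^*\partial_k)=0$, so that $\partial_k^*\,Y\,\partial_k^*=0$. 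This is exactly (\ref{eq-5-15}). Finally, the special case (\ref{eq-5-16}) follows by taking $t=0$ and using $\mathcal{T}_0(X)=X$.

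As for difficulty, essentially everything here is bookkeeping with the projections $\partial_k^*\partial_k$ and $\partial_k\partial_k^*$; the one genuinely load-bearing point is that the regularity hypothesis $\inf_{j\geq 0}w(j,j)>0$ is precisely what upgrades the mere commutation of $\mathcal{T}_t(X)$ with $L_{kk}$ to commutation with $\partial_k^*\partial_k$, which in turn is what makes the CAR cancellations go through. No genuine obstacle is anticipated beyond keeping track of the order of factors when moving the projections across $\mathcal{T}_t(X)$.
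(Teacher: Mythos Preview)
Your argument is correct. Both you and the paper start by extracting, from the proof of Proposition~\ref{prop-5-7} together with the regularity hypothesis $w(k,k)>0$, the commutation relation $\partial_k^*\partial_k\,\mathcal{T}_t(X)=\mathcal{T}_t(X)\,\partial_k^*\partial_k$. The difference lies in how the conclusion is then drawn. The paper first observes that this commutation, combined with $\partial_k\partial_k=\partial_k^*\partial_k^*=0$, yields $\partial_k^*\partial_k\,\mathcal{T}_t(X)\,\partial_k=0$ and $\partial_k^*\,\mathcal{T}_t(X)\,\partial_k^*\partial_k=0$, and then finishes by a direct check on the canonical basis vectors $Z_\sigma$ via Lemma~\ref{lem-3-1}. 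Your route is instead purely operator-algebraic: you factor $\partial_k=(\partial_k\partial_k^*)\,\partial_k\,(\partial_k^*\partial_k)$ and $\partial_k^*=(\partial_k^*\partial_k)\,\partial_k^*\,(\partial_k\partial_k^*)$, slide the inner projection through $\mathcal{T}_t(X)$, and annihilate against the orthogonal projection on the other side. Your approach is slightly cleaner in that it never leaves the CAR algebra and avoids the basis-vector case distinction; the paper's approach is more concrete and makes the vanishing visible on each $Z_\sigma$. Either way the substantive input is identical, and your identification of regularity as the load-bearing hypothesis is exactly right.
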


\begin{proof}
Let $X \in \mathfrak{N}(\mathcal{T})$ and $t\geq 0$. By (\ref{eq-5-14}), we have
\begin{equation*}
  L_{kk}^*L_{kk}\mathcal{T}_t(X) = \mathcal{T}_t(X)L_{kk}^*L_{kk},\quad  k\geq 0,
\end{equation*}
which, together with the notation $L_{jk}=\sqrt{w(j,k)}\,\partial_j^*\partial_k$,  Lemma~\ref{lem-3-3} as well as the regularity of transition kernel $w$,
yields
\begin{equation*}
  \partial_k^*\partial_k\mathcal{T}_t(X) = \mathcal{T}_t(X)\partial_k^*\partial_k,\quad k\geq 0,
\end{equation*}
which together with Lemma~\ref{lem-3-2} gives
\begin{equation}\label{eq-5-17}
  \partial_k^*\partial_k\mathcal{T}_t(X)\partial_k =0,\quad
  \partial_k^*\mathcal{T}_t(X)\partial_k^*\partial_k =0, \quad k\geq 0.
\end{equation}
Now let $k\geq 0$. Then, for all $\sigma \in \Gamma$, by Lemma~\ref{lem-3-1} we find
\begin{equation*}
  \partial_k^*\mathcal{T}_t(X)\partial_k^*Z_{\sigma}=
  \left\{
    \begin{array}{ll}
      \partial_k^*\mathcal{T}_t(X)(\partial_k^*Z_{\sigma}) = 0, & \hbox{$k\in \sigma$;} \\
      \partial_k^*\mathcal{T}_t(X)\partial_k^*\partial_kZ_{\sigma\cup k}=0, & \hbox{$k\notin \sigma$,}
    \end{array}
  \right.
\end{equation*}
which, together with the fact that $\{Z_{\sigma} \mid \sigma \in \Gamma\}$ is an ONB, implies that
$\partial_k^*\mathcal{T}_t(X)\partial_k^*=0$.
Similarly,  for all $\sigma$, $\tau \in \Gamma$, we have
\begin{equation*}
  \langle Z_{\sigma}, \partial_k\mathcal{T}_t(X)\partial_kZ_{\tau}\rangle=
   \left\{
     \begin{array}{ll}
       \langle \partial_k^*Z_{\sigma}, \mathcal{T}_t(X)\partial_kZ_{\tau}\rangle=0, & \hbox{$k\in \sigma$;} \\
       \langle Z_{\sigma\cup k}, \partial_k^*\partial_k\mathcal{T}_t(X)\partial_kZ_{\tau}\rangle=0, & \hbox{$k\notin \sigma$,}
     \end{array}
   \right.
\end{equation*}
which implies that $\partial_k\mathcal{T}_t(X)\partial_k=0$. It then follows from the arbitrariness of $t\geq0$ and $k\geq0$ that (\ref{eq-5-15}) holds.
\end{proof}

\section*{Acknowledgement}

The authors are extremely grateful to the referees for their valuable comments and suggestions on improvement
of the first version of the present paper.
This work is supported by National Natural Science Foundation of China (Grant No. 11461061, 11861057).

\end{document}